\documentclass[11pt]{article}

\usepackage[margin=1in]{geometry}

\usepackage{graphicx, transparent}      
\usepackage{tikz}          
\usetikzlibrary{arrows.meta, positioning, shapes.geometric, calc, patterns}

\usepackage{amsmath, amssymb, amsfonts, mathtools, bm, mathrsfs}

\usepackage{algorithm}
\usepackage{algorithmic}

\usepackage{subcaption}    
\usepackage{adjustbox}     
\usepackage{arydshln}      
\usepackage{enumerate}
\graphicspath{{figures/}}  

\DeclareMathOperator*{\argmin}{arg\,min}

\DeclareMathOperator{\der}{D}
\DeclareMathOperator{\T}{T}
\DeclareMathOperator{\F}{F}

\DeclareMathOperator{\nlog}{nlog}
\DeclareMathOperator{\flog}{flog}

\newcommand{\codecomment}[1]{\hfill {\footnotesize \texttt{// #1}}}

\let\origvdots\vdots
\let\origddots\ddots

\newcommand{\svdots}{\scalebox{0.6}{$\origvdots$}}
\newcommand{\sddots}{\scalebox{0.6}{$\origddots$}}

\newcommand{\sbmatrix}[1]{%
  \begingroup
  \let\vdots\svdots
  \let\ddots\sddots
  \left[\begin{smallmatrix}#1\end{smallmatrix}\right]
  \endgroup
}

\newcommand{\realmat}[1]{\mathbb{R}^{#1 \times #1}}

\newcommand{\realset}{\mathbb{R}}

\newcommand{\zerov}{\mathbf{0}}

\newcommand{\so}{\mathbb{SO}}

\newcommand{\orth}{\mathbb{O}}
\newcommand{\skewm}{\mathbf{Skew}}

\newcommand{\dexp}{\der\exp}
\newcommand{\dexpof}[1]{\dexp\left(#1\right)}

\newcommand{\outerprodskew}[2]{\sum_{i=1}^m #1_{[i]}\big[\begin{smallmatrix}0 & -#2_i\\#2_i & 0\end{smallmatrix}\big]#1_{[i]}^{\T}}
\newcommand{\outerprodshift}[3]{\sum_{i=1}^m #1_{[i]}\big[\begin{smallmatrix}0 & -#2_i-2\pi #3_i\\#2_i+2\pi #3_i & 0\end{smallmatrix}\big]#1_{[i]}^{\T}}

\newcommand{\prng}{(-\pi,\pi]}

\newcommand{\cale}{\mathcal{E}}

\newcommand{\calz}{\mathcal{Z}}

\newcommand{\frakq}{\mathfrak{Q}}

\newcommand{\fraks}{\mathfrak{S}}

\usepackage{amsmath, amssymb, amssymb, amsfonts, amsthm}
\usepackage{mathrsfs}
\usepackage{color}
\usepackage{verbatim}
\usepackage{subcaption}

\usepackage[
    colorlinks=true,
    linkcolor=blue,
    citecolor=blue,
    urlcolor=blue
]{hyperref}

\usepackage[capitalize, nameinlink]{cleveref}

\numberwithin{equation}{section}

\theoremstyle{plain}
\newtheorem{theorem}{Theorem}[section]

\newtheorem{definition}[theorem]{Definition}
\newtheorem{lemma}[theorem]{Lemma}
\newtheorem{proposition}[theorem]{Proposition}
\newtheorem{corollary}[theorem]{Corollary}
\newtheorem{remark}[theorem]{Remark}

\crefname{theorem}{Theorem}{Theorems}
\Crefname{theorem}{Theorem}{Theorems}

\crefname{proposition}{Proposition}{Propositions}
\Crefname{proposition}{Proposition}{Propositions}

\crefname{definition}{Definition}{Definitions}
\Crefname{definition}{Definition}{Definitions}

\crefname{lemma}{Lemma}{Lemmas}
\Crefname{lemma}{Lemma}{Lemmas}

\crefname{corollary}{Corollary}{Corollaries}
\Crefname{corollary}{Corollary}{Corollaries}

\crefname{remark}{Remark}{Remarks}
\Crefname{remark}{Remark}{Remarks}

\usepackage{algorithm}
\usepackage{algorithmic}

\newcommand{\logVenn}[1][0.5]{
  \begin{tikzpicture}[scale=#1, every node/.style={transform shape}, font=\large]
        \node[draw, line width=1.5pt, minimum width=6.8cm, minimum height=4cm] (A) at (0,0) {};
        \node[anchor=south west, inner sep=2mm] at (A.south west) {Frobenius-Nearest Preimage(s) to $S$};

        \node[draw, dashed, minimum width=2.8cm, minimum height=3.3cm, anchor=north east] (D) at ([xshift=-1.2cm]A.north east) {};
        \node[anchor=south west, inner sep=2mm] at (D.south west) {$\|S-X\|_2 <\pi$};

        \node[draw, line width=1.5pt, minimum width=6.5cm, minimum height=4cm, anchor=south west] (B) at ([yshift=0.8cm]D.south west) {};
        \node[anchor=north west, inner sep=2mm] at (B.north west) {Preimage(s) D-Connected with $S$};

        \fill[pattern=north east lines, pattern color=gray] (B.south west) rectangle (D.north east); 
        
        \node[align=center] at ($(B.south west)!0.5!(D.north east)!0.5!(D.center)$) {Nearby\\ Logarithm};

        \coordinate (DiffNE) at ($(B.north east)+(-1.0cm,-0.8cm)$);

        \fill[pattern=north west lines, pattern color=gray] 
            (B.south west) rectangle (DiffNE);
        \draw (B.south west) rectangle (DiffNE);

        \node[anchor=north west, inner sep=2mm]
            at ($(B.south west |- DiffNE)$)
            {Diffeomorphic Logarithm};

        \node[
            draw,
            line width=1.2pt,
            minimum width=10.2cm,
            minimum height=6.4cm,
            anchor=center
        ] (G) at ($(A.center)!0.5!(B.center)$) {};

        \node[anchor=north west, inner sep=2mm]
            at (G.north west)
            {Preimages of $Q$};
    \end{tikzpicture}
}

\title{Diffeomorphic Logarithm of Special Orthogonal Matrices\thanks{This work was supported by the National Natural Science Foundation of China (No. 12371311), the Natural Science Foundation of Fujian Province (No. 2023J06004), and the Fonds de la Recherche Scientifique -- FNRS under Grant no T.0001.23. This work was done in part when Kyle A. Gallivan was visiting UCLouvain, supported by the ``Professeurs et chercheurs visiteurs'' budget of the Science and Technology Sector.}}

\author{Zhifeng Deng\thanks{School of Mathematical Sciences, Xiamen University, Xiamen, China
  (\url{zhifengdeng@xmu.edu.cn}).}
\and P.-A. Absil\thanks{ICTEAM Institute, UCLouvain, Louvain-la-Neuve, Belgium
  (\url{pa.absil@uclouvain.be}).}
\and Kyle A. Gallivan\thanks{Department of Mathematics, Florida State University, Tallahassee, USA (\url{kgallivan@fsu.edu}).}
\and Wen Huang\thanks{Corresponding author. School of Mathematical Sciences, Xiamen University, Xiamen, China
(\url{wen.huang@xmu.edu.cn}).}
}

\begin{document}

\maketitle

\abstract{
    The special orthogonal group $\so_n$ is a Lie group whose geometry and local structure are encoded by the exponential map in its Lie algebra $\skewm_n$, the set of skew-symmetric matrices. The associated multi-valued inverse problem---the matrix logarithm---in $\so_n$ exhibits a highly nontrivial local diffeomorphism structure, which differs from the matrix logarithm for invertible matrices. This work characterizes the local diffeomorphism structure of the exponential in the set of skew-symmetric matrices where its derivative is invertible. We show that this set with an invertible derivative can be organized into diffeomorphic regions, using a canonical alignment of Schur decompositions. In particular, the region that contains the principal logarithm has a special multiplicity structure: each matrix in $\so_n$ admits at most two skew-symmetric preimages in this region. Based on this geometric framework, we introduce the diffeomorphic logarithm of special orthogonal matrices together with an efficient and stable algorithm. Moreover, it is applied to the Karcher mean problem in $\so_n$, demonstrating continuous behavior of the mean under perturbations of the data, which is not captured by the principal logarithm.
}

\vspace{0.5cm}

\noindent\textbf{Keywords:} Matrix logarithm, Special Orthogonal Group, Diffeomorphism.

\vspace{0.5cm}

\noindent\textbf{MSC Classification:} 53B20, 15B10, 15A16.

\section{Introduction}

The matrix exponential
\begin{equation*}
    \begin{aligned}
        \exp:\,&\skewm_n := \{X \in \realmat{n} : X + X^{\T} = 0\}\\
        &\to \so_n:= \{Q \in \realmat{n} : Q^{\T}Q = I_n,\ \det(Q)=1\},
    \end{aligned}
\end{equation*}
maps the space of real skew-symmetric matrices to the special orthogonal group and encodes the local geometry of~$\so_n$ through its Lie algebra. Beyond its geometric role, the exponential and its inverse are fundamental primitives in numerical algorithms in matrix manifolds, including the computation of geodesics and solving optimization methods in the Stiefel and orthogonal manifolds, e.g., the computation of geodesics in the Stiefel manifolds~\cite{mataigne2025stiefel} and the Karcher mean problem in $\so_n$~\cite{moakher2002means}.

The exponential $\exp: \skewm_n \to \so_n$ is surjective, but a central difficulty is that it is not injective; see, e.g.,~\cite{zhou2019continuity}, which proposes a continuation strategy in $\so_3$ to avoid branch selection when computing the matrix logarithm. In practice, a single-valued inverse is obtained by selecting a branch that restricts eigenvalues of a preimage to a fixed range. The most common choice is the principal branch, where eigenvalues have imaginary parts in $(-\pi,\pi)$; this defines the principal logarithm. However, this selection becomes unstable (see, e.g.,~\cite{dieci1999real}) near the boundary of the branch, namely when rotations in $\so_n$ have angles close to $\pi$ in some directions (i.e., near an antipodal point). Consequently, there exist continuous curves $S(t)$ in $\skewm_n$ where the principal logarithm of $\exp(S(t))$ fails to be continuous. For example, the Euler-angle representations in $\so_3$ exhibit discontinuity and branch-switching phenomena near rotations of angle $\pi$, causing well-known issues in robotics and attitude control; see, e.g.,~\cite{diebel2006representing}.

Moreover, continuity alone is insufficient for many applications. In optimization problems, such as the $\so_n$ Karcher mean~\cite{moakher2002means}, local diffeomorphism plays a crucial role: it ensures that an objective $f$ defined in $\so_n$ can be equivalently represented in $\skewm_n$ via the pullback of the objective
\begin{equation*}
    \hat{f}(X) = f(\exp(X)),
\end{equation*}
with the local optimization geometry of $\hat{f}$ around $X$ preserving the critical points and their nature. This equivalence relies on the invertibility of the derivative $\dexpof{X}$. For general real matrices, the differential of the matrix exponential is invertible throughout the principal branch, but becomes rank-deficient on its boundary. Consequently, pullback-based optimization formulations via $\exp$ in $\realmat{n}$ are naturally limited to the principal branch, for which the principal logarithm is sufficient. In contrast, for skew-symmetric matrices, the local diffeomorphism structure of $\exp$ extends beyond the principal branch. This distinction enables optimization problems in $\so_n$ to be formulated in a larger domain in $\skewm_n$, providing a foundation for further analysis and suggesting improved practical performance. For example, in the $\so_n$ Karcher mean problem, we observe more consistent behavior under perturbations of the data.

Recent work in~\cite{deng2025expskew} provides a structural explanation for this phenomenon by analyzing the differential of the exponential restricted to~$\skewm_n$. In particular, the set of points where the differential is rank-deficient,
\begin{equation}\label{eq:tangent-conjugate-locus-def}
    \fraks := \{X \in \skewm_n : \dexpof{X} : \skewm_n \to T_{\exp(X)}\so_n \text{ is rank-deficient}\},
\end{equation}
was characterized, revealing that the complement $\skewm_n \setminus \fraks$ decomposes into countably many open connected components consisting entirely of points with invertible differential. Notably, the principal branch (restricted to $\skewm_n$) is a proper subset of a component denoted by $\mathcal{C}_*$. As a result, one may move beyond the boundary of the principal branch within $\mathcal{C}_*$ without encountering rank-deficiency of $\dexp$, thereby preserving a local diffeomorphism of~$\exp$ on suitable neighborhoods while the principal logarithm ceases to describe a smooth continuation. This discovery shows that the local diffeomorphism structure of $\exp:\skewm_n \to \so_n$ is substantially richer than what is obtained by simply restricting the exponential from~$\mathbb{C}^{n\times n}$ to~$\skewm_n$. Consequently, it enables the notion of a \emph{nearby logarithm} as a local inverse of~$\exp$ around a reference point. However, the geometric structure of these components, the existence and selection of inverse images for a given special orthogonal matrix, and the design of practical algorithms remain largely open.

The goal of this work is to establish a complete and constructive
diffeomorphic logarithm theory on~$\so_n$ based on the component
structure of~$\skewm_n \setminus \fraks$.
We analyze the exponential map in each component,
characterize the multiplicity and geometry of its inverse images,
and develop an efficient algorithm that produces logarithms
compatible with smooth continuation.

\subsection{Related work}

The matrix logarithm and its inverse relationship with the exponential have been extensively studied in matrix analysis. Classical results establish the existence of real logarithms under spectral conditions and identify the principal logarithm as the unique inverse obtained from the principal branch of the exponential (see, e.g.,~\cite[Chapter 11]{higham2008}), on which the exponential is a diffeomorphism. Numerical algorithms for computing matrix logarithms, including Schur–Parlett recursions~\cite{davies2003schurparlett}, inverse scaling–squaring methods~\cite{al2013logarithm} for general invertible matrices, and approaches specialized to special orthogonal matrices~\cite{gallier2003computing,cardoso2010exponentials}, likewise compute a single-valued branch compatible with the principal domain, rather than characterizing alternative regions where the exponential remains locally diffeomorphic. These existing algorithms typically rely on eigenvalue decompositions, angle unwrapping, or principal-branch constructions, and therefore inherit the discontinuities associated with the boundary of the principal domain. Such approaches do not explicitly address the component-wise diffeomorphic structure of the exponential that is required for smooth continuation in geometric numerical methods.

The continuity of rotation representations has been investigated in the literature, especially through quaternions for $\so_3$. Quaternions for $\so_3$ represent rotations continuously over angles in $(-2\pi,2\pi)$ (beyond the principal angle range $(-\pi,\pi)$), and have been widely used in robotics, computer graphics, computer vision, and attitude control; see, e.g.,~\cite{diebel2006representing}. This quaternion characterization is naturally subsumed by the local diffeomorphic structure developed in this paper; see~\cref{subsec:so3}. Another continuous representation of rotations is by relaxing the first $n-1$ columns of $Q\in\so_n$ to a full-rank matrix $U\in\mathbb{R}^{n\times (n-1)}$ in the ambient space, see~\cite{zhou2019continuity}. The corresponding rotation is then recovered from $U$ by the Gram--Schmidt process. This characterization is not unique, and its differential structure is governed by the complicated re-orthogonalization map, which also relies on algorithmic choices. This is natural in the learning setting considered in~\cite{zhou2019continuity}, where the geometry of the objective is learned from sampled data. In contrast, the present work investigates the local diffeomorphism structure of $\exp:\skewm_n\to\so_n$, revealing the exact logarithmic geometry.

From the Lie-theoretic viewpoint (see, e.g.,~\cite[Sec.~1.2]{rossmann2006lie}), $\so_n$ is a smooth matrix Lie group whose exponential map is locally diffeomorphic away from points where the differential becomes singular. While the rank-deficiency of the differential is well-understood in general Lie theory, explicit structural descriptions within the skew-symmetric algebra~$\skewm_n$ are largely absent from the classical literature.

From the normal matrices viewpoint, the work of Mataigne and Gallivan~\cite{mataigne2024eigenvalue} provides an essential theoretical foundation, namely the invariance group of Schur bases of normal matrices. Specifically, for a Schur decomposition of a normal matrix $M = RTR^{\T}$ where $R\in \orth_n$ is a Schur basis and $T$ is block-diagonal, the invariance group $\{O\in \orth_n: T = O^{\T} TO\}$ describes all equivalent Schur bases $RO$ in the transformed decomposition $M = (RO)(O^{\T}TO)(RO)^{\T}$. Building on this structural insight, Deng et al.~\cite{deng2025expskew} establishes a relationship between the Schur bases of skew-symmetric matrices and special orthogonal matrices (reviewed in~\cref{prop:shared-basis}). In contrast to the invariance characterization for one normal matrix, this result reveals a structural correspondence between two geometrically linked matrices, thereby enabling a further exploitation of the geometry of $\so_n$ in the present work. The efficient Schur decomposition algorithm for normal matrices proposed in~\cite[Algorithm 3.1]{mataigne2024eigenvalue} also provides a fundamental computational framework for logarithm algorithms, which enables stable and efficient decompositions for skew-symmetric and special orthogonal matrices, forming a key algorithmic building block for the logarithm and inverse-exponential procedures studied here.

Most closely related to this paper is the recent analysis in~\cite{deng2025expskew}, which characterized the rank-deficient set $\fraks$ of the differential of the exponential restricted to~$\skewm_n$ and showed that its complement decomposes into countably many connected components of points with invertible differential. That work introduced the notion of a nearby logarithm as a local inverse defined within a suitable neighborhood of a reference point. However, the global geometry of these components, the multiplicity and selection of inverse images for a given orthogonal matrix, and the construction of practical algorithms compatible with smooth continuation were left unresolved. The present paper provides a complete component-wise diffeomorphic logarithm framework together with explicit inverse characterizations and an efficient Schur-based computational method.

\subsection{Contributions}

The main contributions of this work are as follows.

\begin{enumerate}
    \item
    A description of the connected components of $\skewm_n \setminus \fraks$. Moreover, the preimage by exp is shown to be a singleton for all points in the non-special component, and a doubleton for all points in the special component $\mathcal{C}_*$ containing the principal branch.

    \item
    The definition of a \emph{diffeomorphic logarithm} based on the connected components of $\skewm_n \setminus \fraks$, extending the nearby logarithm of~\cite{deng2025expskew} and clarifying its relationship with the Frobenius-nearest preimage in $\skewm_n\setminus\fraks$; see~\cref{fig:venn} and~\cref{remark:venn}.

    \item
    An efficient and robust algorithm for computing the proposed diffeomorphic logarithm, or a fallback preimage in the closure of any component when the diffeomorphic logarithm is undefined, with computational cost of two Schur decompositions and one matrix multiplication.
\end{enumerate}

\subsection{Organization}

The remainder of this paper is organized as follows.
\Cref{sec:preliminaries} reviews preliminary results and introduces the
notation used throughout.
The two-preimage structure in the special component~$\mathcal{C}_*$
and the diffeomorphic behavior on the remaining components are
established in \cref{sec:tangent-conjugate-complement}.
\Cref{sec:conjugate-complement} introduces the notion of a canonical
Schur decomposition, and identifies a key invariant of the countably many
components. \Cref{sec:diffeomorphic-logarithm} defines the diffeomorphic logarithm and its guaranteed fallback, and presents the algorithms for computing them. \Cref{sec:experiments} presents numerical experiments comparing
computational cost with the principal logarithm in \texttt{MATLAB} and
demonstrating performance in Karcher mean computations on~$\so_n$. Conclusions are drawn in~\cref{sec:conclusion}.

\section{Preliminaries}\label{sec:preliminaries}

\subsection{Matrices of interests}

Let $\skewm_n:=\{X\in\realmat{n}:X+X^{\T} = \zerov\}$ denote the set of $n\times n$ skew-symmetric matrices and $\so_n := \{X\in\realmat{n}:Q^{\T}Q = I_n, \det(Q) = 1\}$ denote the set of $n\times n$ special orthogonal matrices. The two sets are related by the matrix exponential map: $\exp:\skewm_n\to \so_n$. Since the matrix exponential is not a bijection, the ``inverse'' of $\exp$ refers to the preimage
\begin{equation*}
    \exp^{-1}(Q):=\{X\in\skewm_n:\exp(X) = Q\}, \forall Q\in\so_n.
\end{equation*}

The differential of the exponential restricted to $\skewm_n$ is denoted as 
\begin{equation*}
    \dexpof{S}:\skewm_n\to T_{\exp(S)}\so_n, X\mapsto \dexpof{S}[X] = \exp(S)Y
\end{equation*}
where $X$ and $Y$ are skew-symmetric matrices. Unless otherwise specified, the notation $\dexpof{S}$ always refers to the differentiation of the matrix exponential on \emph{skew-symmetric matrices} at the \emph{skew-symmetric} $S$. 

The invertibility of $\dexpof{S}$ is one of the most essential properties throughout this paper. The tangent conjugate locus $\fraks$ and the D-notation in~\cref{def:d-notation} are introduced for simplicity. 

\begin{definition}\label{def:conjugate-locus}
    The \emph{tangent conjugate locus}\footnote{The term \emph{tangent conjugate locus} comes from differential geometry; see, e.g.,~\cite[\S 5.3]{dC92}.} $\fraks$~\eqref{eq:tangent-conjugate-locus-def} denotes the set of $X\in\skewm_n$ with rank-deficient $\dexpof{X}:\skewm_n\to T_{\exp(X)}\so_n$, 
    and its image under the matrix exponential is termed the \emph{conjugate locus} in $\so_n$, denoted by $\frakq:= \{\exp(X):X\in\fraks\}$. Moreover, the respective complements are termed the \emph{tangent conjugate complement} $\skewm_n\setminus\fraks$ and the \emph{conjugate complement} $\so_n\setminus \frakq$.
\end{definition}

\begin{definition}\label{def:d-notation}
    A \emph{D-curve} is a continuous curve in the tangent conjugate complement $\skewm_n\setminus\fraks$, denoted by $\Gamma:[0,1]\to\skewm_n\setminus\fraks$. Moreover, two matrices in $\skewm_n\setminus\fraks$ are termed \emph{D-connected} if they are connected by a D-curve, and a maximal path-connected subset of $\skewm_n\setminus\fraks$ is termed a \emph{D-component}.
\end{definition}

The \emph{nearby logarithm} proposed in~\cite{deng2025expskew} is then reviewed in~\cref{def:nearby-logarithm}.

\begin{definition}\label{def:nearby-logarithm}
    Given a skew-symmetric matrix $S\in\skewm_n\setminus\fraks$, consider the neighborhood
    \begin{equation*}
        \mathcal{N}_S:=\{X\in\skewm_n\setminus\fraks: \text{$S$ and $X$ are D-connected and $\|X-S\|_2<\pi$}\},
    \end{equation*}
    where $\|\cdot\|_2$ denotes the matrix $2$-norm (i.e., the spectral norm). Then, $\exp:\mathcal{N}_S\to\calz_S:=\{\exp(X):X\in\mathcal{N}_S\}$ is a diffeomorphism and its inverse is termed the \emph{nearby logarithm around $S$}, denoted by
    \begin{equation}\label{eq:nearby-logarithm}
        \nlog_S:\calz_S\to\mathcal{N}_S.
    \end{equation}
\end{definition}

\subsection{Real Schur Decomposition}\label{subsec:notations}

A real Schur decomposition $X = R\,TR^{\T}$ of $X\in \realmat{n}$ finds an orthogonal Schur basis $R\in\orth_n:=\{Q\in\realmat{n}:Q^{\T}Q = I_n\}$ and a block upper-triangular matrix $T$ where the blocks on the diagonal are either $1\times 1$ or $2\times 2$. Moreover, for a normal matrix $X$ (i.e., $X^{\T}X = XX^{\T}$), the block upper-triangular matrix $T$ becomes block diagonal. Consequently, for both skew-symmetric matrices and special orthogonal matrices, the diagonal blocks in $T$ can be ordered and grouped into blocks with size of $2\times 2$---except a single $1\times 1$ leftover block when $n$ is odd---via permuting the column orders in the Schur basis $R$. We introduce the integers $m,k\in\mathbb{Z}$ to distinguish the parity of $n$: (i) when $n$ is even, $2m = 2k = n$, or (ii) when $n$ is odd, $2m+1 = 2k-1 = n$. Under the $m,k$ convention, we introduce the following block partition on $n\times n$ matrices and the respective bracket index notation to express the special structure of Schur decomposition on normal matrices.

For an $n\times n$ matrix $X$, the subscript with one bracketed index $X_{[i]}, i \leq m$ denotes the $i$-th pair of two columns in $X$. When $n = 2m+1$ is odd, $X_{[k]}, k = m+1$ denotes the left-over column. Similarly, the subscript with two bracketed indices $X_{[i,j]}, i,j\leq m$ denotes the $i,j$-th $2\times 2$ block in $X$ while the $X_{[i,k]}, X_{[k,j]}$ and $X_{[k,k]}$ with $i,j\leq m, k = m+1$ denote the blocks in the left-over row, column, and diagonal. When $n = 5, m= 2, k =3$, the block partitions are given by
\begin{equation*}
    \begin{aligned}
        \left[\begin{array}{cc:cc:c}
            X_{11} & X_{12} & X_{13} & X_{14} & X_{15}\\
            X_{21} & X_{22} & X_{23} & X_{24} & X_{25}\\
            \hline
            X_{31} & X_{32} & X_{33} & X_{34} & X_{35}\\
            X_{41} & X_{42} & X_{43} & X_{44} & X_{45}\\
            \hline
            X_{51} & X_{52} & X_{53} & X_{54} & X_{55}\\
        \end{array}\right] \begin{aligned}
            &=\left[\begin{array}{c:c:c}
                X_{[1,1]} & X_{[1, 2]} & X_{[1, 3]}\\
                \hline
                X_{[2,1]} & X_{[2, 2]} & X_{[2, 3]}\\
                \hline
                X_{[3,1]} & X_{[3, 2]} & X_{[3, 3]}\\
            \end{array}\right]\text{ in blocks,}\\
            &\text{ or }=\left[\begin{array}{c:c:c}
                X_{[1]} & X_{[2]} & X_{[3]}\\
            \end{array}\right] \text{ in columns.}
        \end{aligned}\\
    \end{aligned}
\end{equation*}

The Schur decomposition of normal matrix, $X = RTR^{\T}$, discussed in this paper is then assumed to be in the form of 
\begin{equation}\label{eq:block-diagonal-normal}
    X = \sum_{i=1}^k R_{[i]}T_{[i,i]}R_{[i]}^{\T},\, \text{where}\,\begin{cases}
        T_{[i,j]} = \zerov, & i\neq j,\\
        T_{[i,i]} = \sbmatrix{r_i & -s_i\\ s_i & r_i}, & i\leq m,\\
        T_{[k,k]} = r_k, & k = m+1.
    \end{cases}
\end{equation}
Note that $T_{[i,i]}\in \realmat{2}$, for $i\leq m$, is a normal matrix because $X$ is normal, which yields the expression of $2\times 2$ normal matrix $\sbmatrix{r & -s\\ s & r}$ for some $r,s\in\realset$.

\textbf{Skew-symmetric matrices:} When $X$ is skew-symmetric, the block diagonal $T$ is also skew-symmetric. Consequently, each $2\times 2$ skew-symmetric block is of the form $T_{[i,i]} = \sbmatrix{0 & -\theta_i\\\theta_i & 0}$ with $\theta_i\in\realset$ for $i\leq m$, and $\sbmatrix{0}$ is the only $1\times 1$ skew-symmetric block.

\textbf{Special orthogonal matrices:} For any special orthogonal $Q$, we reserve the letter $E$ as the special orthogonal block diagonal $T$ (to distinguish it from the skew-symmetric case), i.e., $Q = RER^{\T}$. Similarly, each $2\times 2$ special orthogonal block is of the form $E_{[i,i]} = \sbmatrix{r_i & -s_i \\s_i & r_i}$ where $r_i^2 + s_i^2 = 1$ for $i\leq m$, and $\sbmatrix{1}$ is the only $1\times 1$ special orthogonal block.

\textbf{Auxiliary angle:} While the technical details and some of the statements differ in the instances of even $n = 2m = 2k$ or odd $n = 2m+1 = 2k-1$, introducing an auxiliary zero angle as $\theta_k \equiv 0$ for $n = 2k-1$ is convenient for unifying some analyses and statements.

\begin{definition}\label{def:angles-and-shifts}
    This paper uses the following notation:
    \begin{itemize}
        \item \textbf{Dimension}: For $n\times n$ matrices, $m =\lfloor n/2\rfloor$ and $k = \lceil n/2\rceil$ are in use.  
        \item \textbf{Schur Basis}: The letters $R$ and $V$ are reserved for orthogonal Schur bases.
        \item \textbf{Angles}: The letters $\alpha$ and $\beta$ are reserved for vectors of angles in $\realset^k$. Moreover, if a vector lies in $(-\pi,\pi]^k$, it is termed \emph{a vector of principal angles}, denoted by the letter $\theta$ or $\sigma$. When $n = 2k-1$, the $k$-th angle is an auxiliary angle set to be $0$. 
        \item \textbf{Shifts}: The letters $\xi:=(x_1,\ldots, x_k)$ and $\eta:=(y_1,\ldots, y_k)$ are reserved for vectors of integers in $\mathbb{Z}^k$ between two vectors of angles (e.g., $\alpha = \theta+2\pi\xi$).  When $n = 2k-1$, the $k$-th shift is an auxiliary shift set to be $0$. 
        \item \textbf{Block diagonal notation}: The capitalized letters of angles are reserved for the block diagonal matrix containing the respective angles in the appropriate dimension, e.g.,
        \begin{equation*}
            \Theta = \begin{cases}
                {\rm diag}\big(\sbmatrix{0 & -\theta_1\\\theta_1 & 0}, \ldots, \sbmatrix{0 & -\theta_m\\\theta_m & 0}\big), &n = 2m = 2k,\\
                {\rm diag}\big(\sbmatrix{0 & -\theta_1\\\theta_1 & 0}, \ldots, \sbmatrix{0 & -\theta_m\\\theta_m & 0}, 0\big), &n = 2m+1 = 2k-1.\\
            \end{cases}
        \end{equation*}
        \item \textbf{Skew-symmetric notation}: A skew-symmetric matrix $S$ with the angles $\theta$ under the Schur basis $R$, as an example, is given by
        \begin{equation*}
            S = R \Theta R^{\T} = \textstyle \sum_{i=1}^m R_{[i]}\sbmatrix{0 & -\theta_i\\\theta_i & 0}R_{[i]}^{\T}.
        \end{equation*} 
        \item \textbf{Special orthogonal notation}: A special orthogonal matrix $Q$ under the Schur basis $R$, as an example, is given by
        \begin{equation*}
            Q = \begin{cases}
                \textstyle \sum_{i=1}^m R_{[i]}\sbmatrix{r_i & -s_i\\s_i & r_i}R_{[i]}^{\T} & n = 2m = 2k,\\[4pt]
                \textstyle \sum_{i=1}^m R_{[i]}\sbmatrix{r_i & -s_i\\s_i & r_i}R_{[i]}^{\T} + R_{[k]}R_{[k]}^{\T} & n = 2m+1 = 2k-1,\\
            \end{cases}
        \end{equation*}
        where $r_i^2  + s_i^2 = 1$ holds for all $i\leq m$. 
    \end{itemize}
\end{definition}

\section{Tangent Conjugate Complement}\label{sec:tangent-conjugate-complement}

This section establishes the diffeomorphism structure of the matrix exponential in the complement $\skewm_n\setminus\fraks$. This complement is decomposed into connected and open D-components $\mathcal{C}_e, e\in\cale$ where $\mathcal{E}$ is a countable set of indices; see~\cite[Corollary~4.4]{deng2025expskew}. The characterization of the index set $\cale$ is given in~\cref{sec:conjugate-complement}. 

\Cref{fig:components} illustrates the D-components of $\skewm_5\setminus\fraks$ by the two non-auxiliary angles $\theta_1$ and $\theta_2$. Observe that the principal logarithm of special orthogonal matrices is strictly contained in a D-component denoted by
\begin{equation}\label{eq:star-component}
    \mathcal{C}_*=\big\{\textstyle\sum_{i=1}^mR_{[i]}\sbmatrix{0 & -\theta_i\\\theta_i & 0}R_{[i]}^{\T}:
    R\in \orth_n, |\theta_i\pm\theta_j|<2\pi,\forall\, i\neq j\leq k\big\},
\end{equation}
Recall that $k = \lceil n/2\rceil$, $m = \lfloor n/2 \rfloor$, and $R_{[i]} = \sbmatrix{R_{2i-1} & R_{2i}}$ refers to the $(2i-1)$th and $(2i)$th columns in $R$, see details in~\cref{subsec:notations}.
\begin{figure}[tbp]
    \centering
    \includegraphics[width=.6\textwidth]{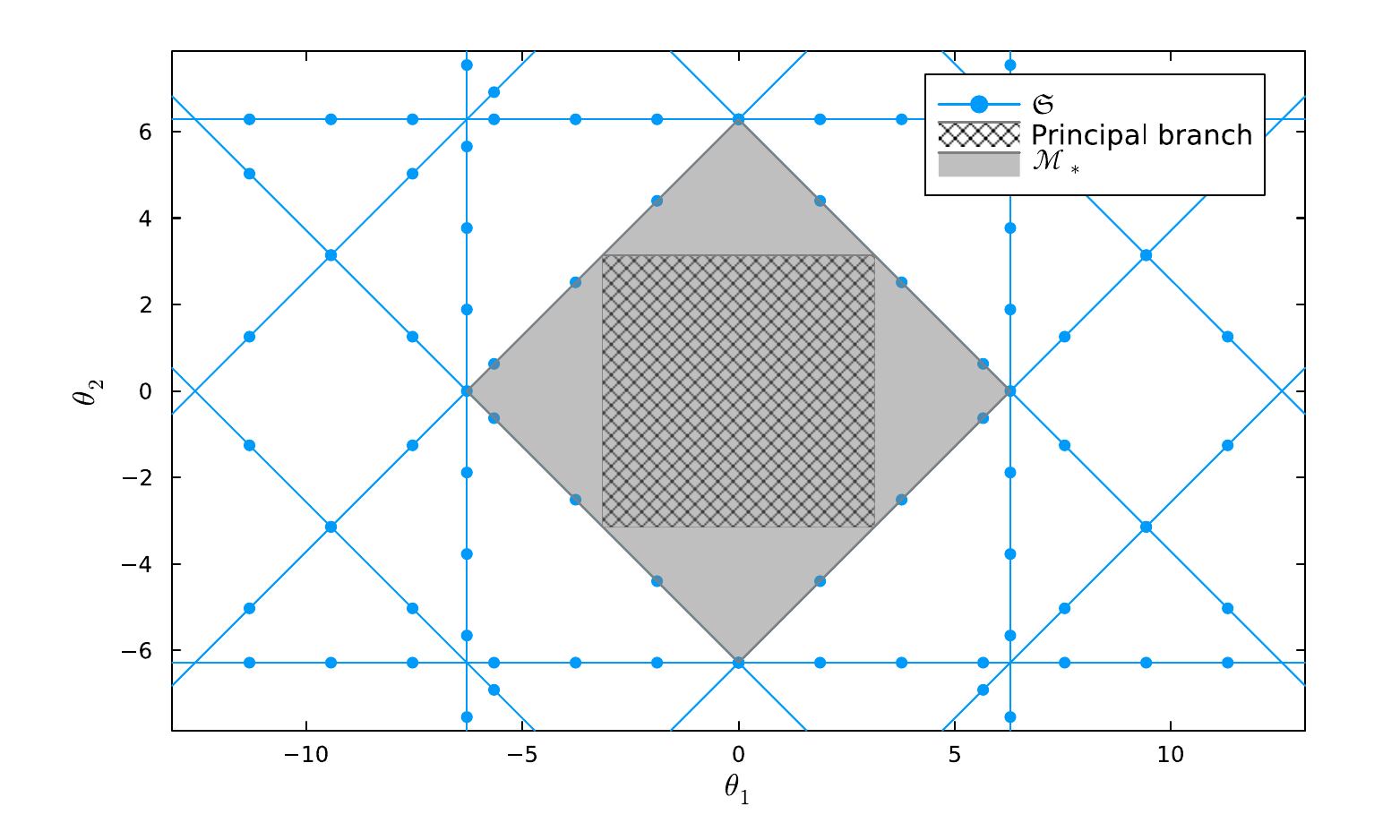}
    \caption{D-components of $\skewm_5\setminus\fraks$ illustrated by the non-auxiliary angle.\\ The vertical and horizontal lines in $\fraks$ are due to the presence of the auxiliary angle $\theta_3 = 0$ in view of~\cref{prop:tangent-conjugate-locus}. When $n=4$, those lines disappear and the rest of the illustration is unchanged.}\label{fig:components}
\end{figure}

In view of the inverse function theorem, $f:\mathcal{X}\to\mathcal{Y}$ is a diffeomorphism if and only if the derivative $\der f(x)$ in $T_x\mathcal{X}$ is invertible for all $x\in\mathcal{X}$, and $f$ is smooth and bijective in $\mathcal{X}$. For a D-component $\mathcal{C}_e$, the invertible $\dexpof{X}$ for all $X\in\mathcal{C}_e$ follows immediately. To investigate the bijective properties of $\exp$ in D-components, this section investigates the distribution of the preimages $\exp^{-1}(Q)$ among the D-components $\mathcal{C}_e, e\in\cale$. In particular, the following statements hold:
\begin{enumerate}[(i)]
    \item $\mathcal{C}_*\cap \exp^{-1}(Q)$ has at most two preimages, as characterized in~\cref{prop:dconnected-dpreimage};
    \item $\mathcal{C}_e\cap \exp^{-1}(Q)$ for $\mathcal{C}_e\neq \mathcal{C}_*$ has at most one preimage, leading to the diffeomorphism stated in~\cref{thm:component-diffeomorphism}.
\end{enumerate}

\subsection{Shifted Angles in D-Preimages}

This subsection identifies the preimages of a given $Q\in\so_n$ in $\skewm_n\setminus\fraks$ via shifting the principal angles $\theta\in\prng^k$ uniquely defined under a Schur basis $R$ of $Q$. Recall that the Schur decomposition of $Q = RER^{\T}\in\so_n$ consists of the diagonal blocks in the form of $E_{[i,i]} = \sbmatrix{r_i & -s_i\\ s_i & r_i}$ where $r_i^2 + c_i^2 = 1$, see~\cref{def:angles-and-shifts}. Consequently, $E_{[i,i]}$ uniquely identifies an angle $\theta_i$ such that
\begin{equation*}
    \exp\big(\sbmatrix{0 & -\theta_i\\ \theta_i & 0}\big) = \sbmatrix{\cos(\theta_i) & -\sin(\theta_i)\\ \sin(\theta_i) & \cos(\theta_i)} = \sbmatrix{r_i & -s_i\\ s_i & r_i},\quad\text{subject to } \theta_i\in \prng.
\end{equation*}
Thus, a Schur decomposition $Q=RER^{\T}$ defines a unique vector of principal angles $\theta\in\prng^k$ as summarized in~\cref{def:principal-angles}, which builds upon~\cref{def:angles-and-shifts}. 

\begin{definition}\label{def:principal-angles}
    For a Schur decomposition $Q=RER^{\T}\in\so_n$, the vector of principal angles $\theta\in(-\pi,\pi]^k$ of $Q$ under $R$ is given by
    \begin{equation*}
        \begin{cases}
            E_{[i,i]}=\sbmatrix{r_i & -s_i\\ s_i & r_i}=\big[\begin{smallmatrix}
                \cos(\theta_i) & -\sin(\theta_i)\\ \sin(\theta_i) & \cos(\theta_i)
            \end{smallmatrix}\big],&\text{for }i\leq m,\\
            \text{the auxiliary }\theta_k = 0,&\text{when }n = 2m+1=2k-1.
        \end{cases}
    \end{equation*} 
\end{definition}

\begin{remark}\label{remark:angles_symmetry}
    For $Q\in\so_n$, all the vectors of principal angles of $Q$ are symmetric with respect to permutation and sign flips of non-$\pi$ angles. Indeed, for a permutation $P:(1,\ldots, k)\mapsto (i_1,\ldots, i_k)$ and the principal angles $\theta$ under $R$, the permuted angles
    \begin{equation*}
        \hat{\theta}=(\theta_{i_1},\ldots, \theta_{i_k})\in \prng^k
    \end{equation*}
    can be obtained from the permuted outer product form of the Schur decomposition
    \begin{equation*}
        \textstyle Q = \sum_{i=1}^k R_{[i]}E_{[i,i]}R_{[i]}^{\T} = \sum_{j=1}^k R_{[i_j]}E_{[i_j,i_j]}R_{[i_j]}^{\T},
    \end{equation*}
    i.e., $\hat{\theta}$ is the principal angles of $Q$ under $\hat{R} = \sbmatrix{R_{[i_1]} & \cdots & R_{[i_k]}}$. Similarly, a flipped principal angle $-\theta_i\in (-\pi, \pi)$ can be uniquely determined from $E_{[i,i]}^{\T} = \sbmatrix{r_i & s_i\\-s_i & r_i}$ in
    \begin{equation*}
        \textstyle R_{[i]}E_{[i,i]}R_{[i]}^{\T} = \sbmatrix{R_{2i-1} & R_{2i}} E_{[i,i]}\sbmatrix{R_{2i-1} & R_{2i}}^{\T} = \sbmatrix{R_{2i} & R_{2i-1}} E_{[i,i]}^{\T}\sbmatrix{R_{2i} & R_{2i-1}}^{\T}.
    \end{equation*}
\end{remark}

Since every $Q$ in $\so_n$ admits a unique vector of principal angles under a Schur basis, the characterization of conjugate locus $\frakq$ is obtained from the characterization of the tangent conjugate locus, which is reviewed in~\cref{prop:tangent-conjugate-locus}.

\begin{proposition}\cite[Theorem 3.8]{deng2025expskew}\label{prop:tangent-conjugate-locus}
    Let $\theta\in\realset^k$ (the auxiliary $\theta_k = 0$ when $n = 2k-1$) be a vector of angles of $X\in\skewm_n$ under a Schur basis $R\in\orth_n$, i.e., $X=\sum_{i=1}^m R_{i}\big[\begin{smallmatrix}
        0 & -\theta_i\\\theta_i & 0
    \end{smallmatrix}\big]R_{[i]}^{\T}$. Then $X\in\fraks$ if and only if there exists $i\neq j\leq k$ and $l\in\mathbb{Z}\setminus \{0\}$, such that $\theta_i\pm\theta_j = 2\pi l$, i.e., 
    \begin{equation}\label{eq:tangent-conjugate-locus}
        \fraks= \big\{
                \textstyle \outerprodskew{R}{\theta}: \exists i\neq j\leq k, l\in\mathbb{Z}\setminus\{0\}, \text{ s.t. } \theta_i\pm \theta_j = 2l\pi
        \big\}.
    \end{equation}
\end{proposition}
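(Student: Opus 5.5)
The plan is to characterize exactly when the differential $\dexpof{X}$ restricted to $\skewm_n$ fails to be injective, by diagonalizing it in the Schur basis of $X$. Write $X=R\Theta R^{\T}$ and use the standard integral formula $\dexpof{X}[H]=\exp(X)\int_0^1 \exp(-tX)H\exp(tX)\,dt$. Because $\exp(X)$ is invertible, $\dexpof{X}$ is rank-deficient if and only if the operator $L_X:H\mapsto\int_0^1 e^{-tX}He^{tX}\,dt$ has a nontrivial kernel on $\skewm_n$. Conjugating by $R$ (set $\tilde H=R^{\T}HR$), which preserves $\skewm_n$, reduces the question to $X=\Theta$ block diagonal. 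Equivalently, $L_X=\phi(\mathrm{ad}_X)$ with $\phi(z)=(1-e^{-z})/z$.

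Next, decompose $\skewm_n$ into blocks $\tilde H_{[i,j]}$, with $i\le j\le k$ using the auxiliary index $k$ when $n$ is odd. Conjugation by the block-diagonal $e^{t\Theta}$ acts on each block independently:
\begin{equation*}
\tilde H_{[i,j]}\mapsto e^{-t\Theta_i}\tilde H_{[i,j]}e^{t\Theta_j}.
\end{equation*}
The diagonal blocks $i=j\le m$ are $2\times2$ skew matrices, which commute with the rotation, so $L$ acts as the identity there. The $1\times1$ diagonal block is zero. For an off-diagonal block $i<j$, identify $\realset^{2\times2}\cong\mathbb{C}^2$, i.e.\ the span of rotation-like and reflection-like matrices. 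There $H\mapsto e^{-t\Theta_i}He^{t\Theta_j}$ acts by multiplication by $e^{\mathrm{i}t(\theta_j-\theta_i)}$ on one complex line and by $e^{-\mathrm{i}t(\theta_i+\theta_j)}$ on the other. When $j=k$ with $n$ odd, the block is $2\times1$, and the action is rotation by $-t\theta_i$, matching the auxiliary convention $\theta_k=0$. Integrating over $t\in[0,1]$ gives eigenvalues $\phi(\mathrm{i}\omega)=(e^{\mathrm{i}\omega}-1)/(\mathrm{i}\omega)$, with $\phi(0)=1$ and $\omega\in\{\theta_i-\theta_j,\,\theta_i+\theta_j\}$. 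Since the off-diagonal blocks of $\tilde H$ for $i<j$ are free parameters of a skew matrix, $\skewm_n$ is the direct sum of these invariant subspaces.

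The spectrum of $L_\Theta$ on $\skewm_n$ is therefore $\{1\}\cup\{\phi(\mathrm{i}(\theta_i\pm\theta_j)):i<j\le k\}$. This vanishes exactly when $\theta_i\pm\theta_j=2\pi l$ with $l\neq0$, which gives \eqref{eq:tangent-conjugate-locus}. The main obstacle is doing the $2\times2$ and $2\times1$ block bookkeeping carefully, so that both the sum and the difference frequencies appear with the right multiplicities and the odd case is absorbed by $\theta_k=0$. I will handle this by writing each $2\times2$ block as $aI+bJ+cK+dJK$ with $J$ the rotation generator, and checking that left and right multiplication by rotations pair $(a,b)$ and $(c,d)$ into the two frequencies. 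Independence of the choice of Schur basis is automatic, since the condition is stated for any $R$ and the argument works for each one.
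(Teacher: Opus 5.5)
Your proof is correct. You write $\dexpof{X}[H]=e^{X}\,\phi(\mathrm{ad}_X)(H)$ with $\phi(z)=(1-e^{-z})/z$. You then split $\skewm_n$ in the Schur basis into invariant diagonal and off-diagonal block subspaces, with the $2\times 1$ blocks handled by the auxiliary $\theta_k=0$. This gives exactly the complex multipliers $\phi(\mathrm{i}(\theta_i\pm\theta_j))$, which vanish precisely when $\theta_i\pm\theta_j\in 2\pi(\mathbb{Z}\setminus\{0\})$. The dimension count and the consistency of the $[j,i]$ blocks also check out.

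The paper gives no proof of this statement; it imports it verbatim from \cite[Theorem 3.8]{deng2025expskew}. Your blockwise diagonalization is therefore a self-contained derivation rather than a competing route.

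One cosmetic slip: with your $\phi$, $\phi(\mathrm{i}\omega)=(1-e^{-\mathrm{i}\omega})/(\mathrm{i}\omega)$, not $(e^{\mathrm{i}\omega}-1)/(\mathrm{i}\omega)$. The two differ only by the unimodular factor $e^{-\mathrm{i}\omega}$, so the zero set and your conclusion are unaffected.
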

Note that the introduction of the auxiliary angle makes the description of $\fraks$ in~\eqref{eq:tangent-conjugate-locus} more streamlined than in~\cite[Theorem 3.1]{deng2025expskew}.

Consider $X = \outerprodskew{R}{\alpha}$ with the angles $\alpha\in\realset^k$ under $R$ and let $Q = \exp(X)$. Notice that the Schur basis $R$ of $X$ is also a Schur basis of $Q$ (as $Q = \exp(X) = R\exp(R^{\T}XR)R^{\T}$ is a Schur decomposition). Let $\theta\in \prng^k$ be the vector of principal angles of $Q$ under $R$, it holds that
\begin{equation}\label{eq:shifted-angles-22}
    \exp\left(\sbmatrix{0 & -\alpha_i\\\alpha_i & 0}\right) = \big[\begin{smallmatrix}
                \cos(\theta_i) & -\sin(\theta_i)\\ \sin(\theta_i) & \cos(\theta_i)
            \end{smallmatrix}\big] \Leftrightarrow \exists x_i\in\mathbb{Z}, \alpha_i = \theta_i + 2\pi x_i\; \text{for $i\leq m$.}
\end{equation}
Thus, the conjugate locus $\frakq$, consisting of all $Q$ with a preimage element in $\frak{S}$ (in view of~\cref{def:conjugate-locus}), is characterized by pairs of coincident principal angles with equal magnitudes, as given in~\cref{coro:conjugate-locus}.

\begin{corollary}\label{coro:conjugate-locus}
    Let $\theta \in \prng^k$ (the auxiliary $\theta_k = 0$ when $n = 2k-1$) be the principal angles of $Q\in\so_n$ under a Schur basis $R$, i.e., $Q = R\exp(\Theta)R^{\T}$ where $\Theta$ is block diagonal matrix with $\Theta_{[i,i]} = \big[\begin{smallmatrix}
        0 & -\theta_i\\\theta_i & 0
    \end{smallmatrix}\big]$ for $i \leq m$ and $\Theta_{[k,k]} = \sbmatrix{0}$ when $n = 2k-1$. Then, $Q\in\frakq$ if and only if there exists $i\neq j\leq k$ such that $|\theta_i| = |\theta_j|$, i.e.,
    \begin{equation}\label{eq:conjugate-locus}
        \frakq = \left\{R\exp(\Theta) R^{\T}:R\in\orth_n, \theta\in\prng^k, \exists i\neq j\leq k, |\theta_i| = |\theta_j|\right\}.
    \end{equation}
\end{corollary}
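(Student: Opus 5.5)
The plan is to reduce \cref{coro:conjugate-locus} to \cref{prop:tangent-conjugate-locus} using the shift relation~\eqref{eq:shifted-angles-22}.

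First, I would fix the preimages of $Q$. Write $Q = R\exp(\Theta)R^{\T}$ with principal angles $\theta\in\prng^k$ under $R$. By the definition of $\frakq$, we have $Q\in\frakq$ exactly when some $X\in\fraks$ satisfies $\exp(X)=Q$. The immediate difficulty is that such an $X$ need not be diagonalized by the given $R$; it only comes with its own Schur basis $V$. Its exponential is then $\exp(X)=V\exp(\mathrm{A})V^{\T}$, where $\mathrm{A}$ is the block diagonal matrix of the angles $\alpha$ of $X$. So $V$ is another Schur basis of $Q$, and under $V$ the principal angles $\sigma$ satisfy $\alpha_i=\sigma_i+2\pi x_i$ by~\eqref{eq:shifted-angles-22}. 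The auxiliary index has $\alpha_k=\sigma_k=0$ and $x_k=0$.

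Second, I would show that the condition ``$\exists i\neq j$ with $|\theta_i|=|\theta_j|$'' does not depend on the Schur basis. The unordered collection of pairs $\{e^{\pm \mathrm{i}\theta_i}\}$ is the spectrum of $Q$, counted with multiplicity; for odd $n$ the auxiliary angle contributes the eigenvalue $1$. The magnitudes $|\theta_i|\in[0,\pi]$ are therefore determined by the spectrum as a multiset. The multiset $\{|\sigma_i|\}$ thus coincides with $\{|\theta_i|\}$, and the existence of a coincident pair is basis-independent. This is the step I expect to need the most care. The delicate cases are angles $0$ and $\pi$, where a $2\times 2$ block carries a double real eigenvalue, and the interplay with the auxiliary $1\times1$ block. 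I would handle these by counting the multiplicities of the eigenvalues $\pm1$ directly.

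Third, with everything expressed in a single basis $V$, I would prove both directions.
\begin{itemize}
\item ($\Rightarrow$) If $X\in\fraks$, then \cref{prop:tangent-conjugate-locus} gives $i\neq j$ and $l\neq0$ with $\alpha_i\pm\alpha_j=2\pi l$. Hence $\sigma_i\pm\sigma_j\in2\pi\mathbb{Z}$. Since $\sigma_i,\sigma_j\in\prng$, we have $\sigma_i\pm\sigma_j\in(-2\pi,2\pi]$. So either $\sigma_i\pm\sigma_j=0$, or $\sigma_i\pm\sigma_j=2\pi$, which forces $\sigma_i=\sigma_j=\pi$ (with the ``$+$'' sign). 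In both cases $|\sigma_i|=|\sigma_j|$.
\item ($\Leftarrow$) If $|\theta_i|=|\theta_j|$ under $R$, choose the sign so that $\theta_i\mp\theta_j=0$, i.e.\ the combination that vanishes. Then shift one angle: set $\alpha_i=\theta_i+2\pi$ and keep all other $\alpha$'s equal to the corresponding $\theta$'s. This makes the chosen combination $\alpha_i\mp\alpha_j=2\pi\neq0$. The matrix $X=\sum_{i=1}^m R_{[i]}\sbmatrix{0&-\alpha_i\\\alpha_i&0}R_{[i]}^{\T}$ still satisfies $\exp(X)=Q$ by~\eqref{eq:shifted-angles-22}, and lies in $\fraks$ by \cref{prop:tangent-conjugate-locus}.
\item There is one exception in the reverse direction: $i$ may be the auxiliary index $k$ for odd $n$, whose angle must stay $0$. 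In that case I would shift $\theta_j$ instead, which is allowed since $j\le m$.
\end{itemize}

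Finally, I would combine the two directions with the basis-independence from the second step. This yields the set description~\eqref{eq:conjugate-locus}.
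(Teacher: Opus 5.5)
Your proposal is correct and follows the paper's proof: the reverse direction shifts one angle of a coincident pair by $2\pi$ under the given basis to produce a preimage in $\fraks$, and the forward direction reads the shift relation~\eqref{eq:shifted-angles-22} through \cref{prop:tangent-conjugate-locus}. Your explicit basis-independence step (magnitudes determined by the spectrum) and your treatment of the $\sigma_i=\sigma_j=\pi$ case and the auxiliary index cover points the paper passes over, since it silently works in the Schur basis of $X$ and writes $\theta_i\pm\theta_j=0$ where the correct conclusion is $\theta_i\pm\theta_j\in\{0,2\pi\}$.
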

\begin{proof}
    For any $Q = R\exp(\Theta)R^{\T}$ satisfying the constraint in~\eqref{eq:conjugate-locus}, construct a preimage of $Q$ in the form of $X = RAR^{\T}$ where $A$ consists of the diagonal blocks $A_{[i,i]} = \sbmatrix{0 & -\alpha_i\\\alpha_i & 0}$ with $\alpha_i = \theta_i +2\pi x_i$ for some $x_i\in\mathbb{Z}, i \leq m$. If $\theta_i = \theta_j$ let $x_i - x_j = 1$ such that $\alpha_i - \alpha_j = \theta_i - \theta_j +2\pi (x_i - x_j) = 2\pi$, i.e., $X \in \fraks$. Similarly, if $\theta_i = -\theta_j$ let $x_i+x_j = 1$ such that $X\in\fraks$. Therefore, $\frakq\subset  \{R\exp(\Theta) R^{\T}\in\so_n:\theta\in\prng^k, \exists i\neq j\leq k, |\theta_i| = |\theta_j|\}$.
    
    We next show that all $Q$ in $\frakq=\{\exp(X):X\in\fraks\}$ meet the constraint in~\eqref{eq:conjugate-locus}. Consider the angles $\alpha\in\realset^k$ of $X$ and the principal angles $\theta\in\prng^k$ of $Q$ under the same Schur basis $R$. In view of~\eqref{eq:tangent-conjugate-locus}, $X\in\fraks\Leftrightarrow \exists i\neq j\leq k, \alpha_i +\alpha_j = 2\pi l$ or $\alpha_i - \alpha_j = 2\pi l$ for some nonzero $l\in\mathbb{Z}\setminus\{0\}$. In view of~\eqref{eq:shifted-angles-22}, $\alpha_i = \theta_i+2\pi x_i$ holds for some $x_i\in\mathbb{Z}$ and $i\leq m$, while the auxiliary $\alpha_k = \theta_k = 0$ when $n= 2k-1$. Then it further yields that $\alpha_i \pm\alpha_j = 2\pi l \Rightarrow \theta_i\pm \theta_j = 0$, i.e., $\{R\exp(\Theta) R^{\T}\in\so_n:\theta\in\prng^k, \exists i\neq j\leq k, |\theta_i| = |\theta_j|\}\subset \{\exp(X):X\in\fraks\}=\frakq$.
\end{proof}

Consider $Q\in \so_n$ and one of its preimages $X\in\skewm_n$. While it is obvious that a Schur basis $R$ of $X$ is also a Schur basis of $Q$, a Schur basis $V$ of $Q$ is not necessarily a Schur basis of $X$. Instead, $Q$ and its preimage $X$ share all Schur bases if $X$ if $X\in\skewm_n\setminus\fraks$, as reviewed in~\cref{prop:shared-basis}.

\begin{proposition}\cite[Proposition 4.9]{deng2025expskew}\label{prop:shared-basis}
    Consider $Q\in \so_n$ and its preimage $X$ in $\skewm_n\setminus \fraks$, termed a \emph{D-preimage} $X$ for short. If $R$ is a Schur basis of $Q$ in the form of~\eqref{eq:block-diagonal-normal} (i.e., $Q = RER^{\T}$), then $R$ is also a Schur basis of $X$ in the form of~\eqref{eq:block-diagonal-normal} such that $X = \outerprodskew{R}{\alpha}$ with the angles $\alpha\in\realset^k$.
\end{proposition}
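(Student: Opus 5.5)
The plan is to show that $X$ acts in a rigid way on each real spectral subspace of $Q$, so that \emph{any} orthonormal basis adapted to $Q$ automatically block-diagonalizes $X$. Write $X=VAV^{\T}$ with angles $\alpha\in\realset^k$ under some Schur basis $V$ of $X$, and $Q=\exp(X)$. For each value $c=\cos\theta$ occurring among the eigenvalues of $Q$, define the real invariant subspace $U_c:=\ker\big(\tfrac12(Q+Q^{\T})-cI\big)$. Then $\realset^n=\bigoplus_c U_c$ orthogonally. Since $X$ commutes with $Q$ and with $Q^{\T}=\exp(-X)$, $X$ preserves every $U_c$.

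Given a Schur basis $R$ of $Q$ as in~\eqref{eq:block-diagonal-normal}, each pair $R_{[i]}$ spans a $2$-dimensional subspace of $U_{\cos\theta_i}$, and the leftover column (for odd $n$) lies in $U_1$. It therefore suffices to determine $X|_{U_c}$ for each $c$. The key tool throughout is the hypothesis $X\notin\fraks$, via \cref{prop:tangent-conjugate-locus}: $\alpha_i\pm\alpha_j\notin 2\pi(\mathbb{Z}\setminus\{0\})$ for $i\neq j\leq k$, with the auxiliary $\alpha_k=0$ when $n$ is odd.

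\textbf{Generic case $\sin\theta\neq0$.} Complexify $U_c$; it splits into the $e^{\pm \mathrm{i}\theta}$-eigenspaces of $Q$, and both are preserved by $X$. On the $e^{\mathrm{i}\theta}$-eigenspace, every eigenvalue of $X$ has the form $\mathrm{i}\lambda$ with $\lambda\equiv\theta \pmod{2\pi}$, and each such $\lambda$ is $\pm\alpha_j$ for some $j$. If two distinct values $\lambda\neq\lambda'$ occurred, the corresponding $\alpha_i,\alpha_j$ (necessarily from different blocks, since $\pm\alpha_i$ cannot both be $\equiv\theta$ when $\sin\theta\ne0$) would satisfy $\alpha_i\pm\alpha_j\in2\pi(\mathbb{Z}\setminus\{0\})$, a contradiction. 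Since $X$ is normal, $X=\mathrm{i}\lambda$ on this eigenspace and $X=-\mathrm{i}\lambda$ on its conjugate. This gives
\[
X|_{U_c}=\frac{\lambda}{\sin\theta}\,K|_{U_c},\qquad K:=\tfrac12(Q-Q^{\T}).
\]
On each $R_{[i]}$ with $\cos\theta_i=c$, the matrix $K$ acts as $\sbmatrix{0&-\sin\theta_i\\ \sin\theta_i&0}$. Hence $X$ acts as $\sbmatrix{0&-\alpha_i\\ \alpha_i&0}$ there, with $\alpha_i:=\lambda\sin\theta_i/\sin\theta\in\{\pm\lambda\}$.

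\textbf{Degenerate case $c=\pm1$.} This is the step I expect to be the main obstacle, because $K$ vanishes on $U_{\pm1}$ and the proportionality argument breaks down.
\begin{itemize}
\item On $U_{-1}$, all angles of $X$ are odd multiples of $\pi$. If $\dim U_{-1}\ge4$, two blocks would give $\alpha_i+\alpha_j$ or $\alpha_i-\alpha_j\in 2\pi(\mathbb{Z}\setminus\{0\})$, since two odd multiples of $\pi$ always have sum or difference a nonzero multiple of $2\pi$. Hence $\dim U_{-1}=2$. Then $X|_{U_{-1}}$ is a $2\times2$ skew-symmetric operator, so in the single orthonormal pair $R_{[i]}$ spanning $U_{-1}$ it is $\sbmatrix{0&-\alpha_i\\ \alpha_i&0}$ for some $\alpha_i$.
\item On $U_1$, all angles are multiples of $2\pi$, and any two of them (including the auxiliary zero angle when $n$ is odd) must have equal magnitude and vanishing sum or difference. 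This forces either $X|_{U_1}=0$, or $\dim U_1=2$ with $n$ even and $X|_{U_1}$ a $2\times2$ skew-symmetric operator. Both are again block-diagonal in any orthonormal basis adapted to $R$, with the leftover column (if any) mapped to $0$.
\end{itemize}

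Assembling the blocks over all $c$ yields $X=\outerprodskew{R}{\alpha}$, as claimed in \cref{prop:shared-basis}.
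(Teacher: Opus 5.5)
Your argument is correct in substance, but one justification in the $U_1$ case must be strengthened (second paragraph below). It also takes a different route from the one the paper relies on.

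The paper does not prove this statement. It imports it from \cite[Proposition 4.9]{deng2025expskew}, and its introduction says the cited result builds on the Mataigne--Gallivan invariance group $\{O\in\orth_n: O^{\T}EO=E\}$ of Schur bases. That is, the argument there works with how different Schur bases of one matrix are related, and shows that off $\fraks$ this relation carries over from $Q$ to $X$. You bypass that machinery. You use the basis-free eigenspaces $U_c$ of $\tfrac12(Q+Q^{\T})$, and show that on each $U_c$ with $c\neq\pm1$ the operator $X$ equals the scalar multiple $\tfrac{\lambda}{\sin\theta}\,\tfrac12(Q-Q^{\T})$. You then settle $U_{\pm1}$ by a dimension count.

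This is self-contained and makes the role of $\fraks$ transparent. Two distinct values $\lambda\neq\lambda'$ in one eigenspace of $Q$ is exactly the mechanism of the paper's $4\times4$ counterexample with angles $t$ and $t+2\pi$. The case $\dim U_{-1}\ge 4$ is exactly \cref{prop:preimage-scenario}(i). As a by-product you also get that blocks with $\theta_i=\pm\theta_j$ carry angles $\alpha_i=\pm\alpha_j$, the within-cluster compatibility of shifts behind \cref{prop:canonical-shift-dpreimage}. What the invariance-group viewpoint gives in exchange is an explicit list of all Schur bases of $Q$, which the paper uses elsewhere, e.g.\ in \cref{thm:so3-quaternion-identification}.

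The step to fix is this. On $U_1$ you say any two angles must have ``equal magnitude and vanishing sum \emph{or} difference.'' That constraint is too weak to give your conclusion. For $n$ even and $\dim U_1\ge 4$ it would allow two blocks with angles $2\pi,2\pi$. Then $X|_{U_1}=2\pi J$ for a complex structure $J$ on $U_1$, and $J$ need not be block-diagonal in an arbitrary orthonormal basis of $U_1$. Every such basis is a Schur basis of $Q|_{U_1}=I$, so the proposition would fail.

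The correct constraint is stronger. For angles $2\pi a$ and $2\pi b$ from distinct blocks, both $2\pi(a+b)$ and $2\pi(a-b)$ lie in $2\pi\mathbb{Z}$. Hence \cref{prop:tangent-conjugate-locus} forces both to vanish, so $a=b=0$. Pairing with the auxiliary zero angle does the same when $n$ is odd. With this correction, your stated dichotomy follows: $X|_{U_1}=0$, unless $n$ is even and $\dim U_1=2$.
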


\begin{corollary}\label{coro:dpreimage-shifted-angle}
    Let $\theta \in \prng^k$ (the auxiliary $\theta_k = 0$ when $n = 2k-1$) be the principal angles of $Q\in\so_n$ under the Schur basis $R\in\orth_n$. Then, for all D-preimage $X\in\exp^{-1}(Q)\cap(\skewm_n\setminus \fraks)$, there exists an integer vector $\xi = (x_1,\ldots, x_k)\in\mathbb{Z}^k$ (the auxiliary $x_k = 0$ when $n = 2k-1$) such that
    \begin{equation}\label{eq:dpreimage-shifted-angle}
        X = \textstyle\outerprodshift{R}{\theta}{x}.
    \end{equation}
\end{corollary}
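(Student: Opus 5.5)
The plan is to combine \cref{prop:shared-basis} with the angle-shift relation \eqref{eq:shifted-angles-22}. Fix a D-preimage $X\in\exp^{-1}(Q)\cap(\skewm_n\setminus\fraks)$. Since $R$ is a Schur basis of $Q$ in the form \eqref{eq:block-diagonal-normal}, \cref{prop:shared-basis} applies and shows that $R$ is also a Schur basis of $X$. Hence there is a vector of angles $\alpha\in\realset^k$ with $X=\outerprodskew{R}{\alpha}$, and with the auxiliary $\alpha_k=0$ when $n=2k-1$, because the leftover $1\times 1$ block of a skew-symmetric matrix is zero.

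Next I compare the block-diagonal forms of $Q$. Exponentiating gives
\begin{equation*}
    Q=\exp(X)=R\exp(A)R^{\T},
\end{equation*}
where $A$ is block diagonal with blocks $\sbmatrix{0 & -\alpha_i\\\alpha_i & 0}$. Also $Q=RER^{\T}$, and $E$ has blocks $\sbmatrix{\cos\theta_i & -\sin\theta_i\\ \sin\theta_i & \cos\theta_i}$ by \cref{def:principal-angles}. Since $R$ is orthogonal, $\exp(A)=R^{\T}QR=E$, so the diagonal blocks agree for every $i\le m$. Then \eqref{eq:shifted-angles-22} yields integers $x_i$ with $\alpha_i=\theta_i+2\pi x_i$ for all $i\le m$. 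When $n=2k-1$, set $x_k=0$; this is consistent because $\alpha_k=\theta_k=0$. Substituting into the outer-product form gives \eqref{eq:dpreimage-shifted-angle}.

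The only delicate point is applying \cref{prop:shared-basis} to the given $R$ rather than to some other basis. We need the resulting decomposition of $X$ to use exactly the same column pairing $R_{[i]}$ and orientation as the decomposition of $Q$, so that the blockwise comparison is legitimate. \Cref{prop:shared-basis} guarantees precisely this form. Everything else is routine bookkeeping, including the uniqueness of $\theta_i\in\prng$ from its block and the auxiliary index.
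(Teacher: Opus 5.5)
Your proposal is correct and matches the paper's proof: \cref{prop:shared-basis} lets you write $X$ in the given basis $R$ with some angles $\alpha$, and \eqref{eq:shifted-angles-22}, applied to the blockwise identity $\exp(A)=R^{\T}QR=E$, gives $\alpha_i=\theta_i+2\pi x_i$. Your explicit blockwise comparison and treatment of the auxiliary index spell out details the paper leaves implicit.
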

\begin{proof}
    In view of \cref{prop:shared-basis}, $X$ can be written as $X = \outerprodskew{R}{\alpha}$, where $\alpha_i = \theta_i + 2\pi x_i$ for $x_i\in\mathbb{Z}$ follows from~\eqref{eq:shifted-angles-22}. 
\end{proof}

Note that the D-preimage constraint is necessary for \cref{coro:dpreimage-shifted-angle} because there exist $X\in\fraks$ and a Schur basis $R$ of $\exp(X) = RER^{\T}$, such that $R$ is not a Schur basis of $X$. For example, consider the matrices
\begin{equation*}
    Q =
    \sbmatrix{
        r & -s & 0 & 0\\
        s & r & 0 & 0\\
        0 & 0 & r & -s\\
        0 & 0 & s & r
    },
    X =
        \sbmatrix{
        0 & -t & 0 & 0\\
        t & 0 & 0 & 0\\
        0 & 0 & 0 & -t - 2\pi\\
        0 & 0 & t + 2\pi & 0
        }, \text{ and } 
    R =
        \sbmatrix{
        0.8 & 0 & -0.6 & 0\\
        0 & 1 & 0 & 0\\
        0.6 & 0 & 0.8 & 0\\
        0 & 0 & 0 & 1
        }.
\end{equation*}
where $r = \cos(t)$ and $s=\sin(t)$. By construction, $X\in \fraks\cap\exp^{-1}(Q)$ and $R$ is a Schur basis of $Q$ as $R^{\T}QR = Q$ is block diagonal. However, because $X$ has different angles $t$ and $t+2\pi$, the same $R$ does not preserve the block diagonal structure; indeed, $R^{\T} X R =
    \sbmatrix{
    0 & -0.8t & 0 & -0.6t -1.2\pi\\
    0.8t & 0 & -0.6t & 0\\
    0 & 0.6t & 0 & -0.8t - 1.6\pi\\
    0.6t + 1.2\pi & 0 & 0.8t + 1.6\pi & 0
    }$. \Cref{prop:preimage-scenario} classifies the possible intersections between the set of preimages $\exp^{-1}(Q)$ and the tangent conjugate locus~$\fraks$. See~\cref{fig:scatter-pattern} and~\cref{remark:scatter-pattern} for an interpretation.

\begin{proposition}\label{prop:preimage-scenario}
    Consider $Q\in\so_n$, the following statements hold:
    \begin{enumerate}[(i)]
        \item $\exp^{-1}(Q)\subset\fraks$ if and only if, for all vectors of principal angles $\theta\in\prng^k$ of $Q$, there exists $i\neq j\leq m$ such that $\theta_i = \theta_j = \pi$ holds\footnote{In view of~\cref{remark:angles_symmetry}, the vectors of principal angles are symmetric with respect to permutations and sign-flips, and $-\pi$ is not allowed. Therefore, if there exists a vector of principal angles $\theta$ of $Q$ satisfying $\theta_i = \theta_j =\pi$ for $i\neq j$, this statement then holds for all vectors of principal angles of $Q$.}.
        \item $\exp^{-1}(Q)\subset(\skewm_n\setminus\fraks)$ if and only if $Q\in\so_n\setminus\frakq$.
        \item Otherwise, $\exp^{-1}(Q)\cap\fraks\neq \emptyset$ and $\exp^{-1}(Q)\cap(\skewm_n\setminus\fraks)\neq \emptyset$.
    \end{enumerate}
\end{proposition}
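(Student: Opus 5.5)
We work throughout with the parametrization of preimages by shifted angles. Fix a Schur decomposition $Q=RER^{\T}$ with principal angles $\theta\in\prng^k$. Every $X=\outerprodshift{R}{\theta}{x}$ with $\xi\in\mathbb{Z}^k$ ($x_k=0$ when $n=2k-1$) lies in $\exp^{-1}(Q)$. By \cref{prop:tangent-conjugate-locus}, such an $X$ lies in $\fraks$ if and only if $(\theta_i+2\pi x_i)\pm(\theta_j+2\pi x_j)\in 2\pi(\mathbb{Z}\setminus\{0\})$ for some $i\neq j\leq k$. Conversely, \cref{coro:dpreimage-shifted-angle} says every D-preimage is of this form for \emph{every} Schur basis $R$ of $Q$. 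The proof handles (ii) first, then (i), and derives (iii) as the complement of the two.

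For (ii), assume first that $Q\notin\frakq$, and let $X\in\exp^{-1}(Q)$ be arbitrary. Take a Schur basis $R$ of $X$ itself. Then $R$ is also a Schur basis of $Q$, and the angles of $X$ under $R$ are $\alpha_i=\theta_i+2\pi x_i$ by \eqref{eq:shifted-angles-22}. If $X\in\fraks$, then some $\alpha_i\pm\alpha_j=2\pi l$, which forces $\theta_i\pm\theta_j\in2\pi\mathbb{Z}$. Since $\theta_i,\theta_j\in\prng$, this gives $|\theta_i|=|\theta_j|$, so $Q\in\frakq$ by \cref{coro:conjugate-locus}, a contradiction. This is exactly the second half of the proof of \cref{coro:conjugate-locus}, and it shows $\exp^{-1}(Q)\subset\skewm_n\setminus\fraks$. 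Conversely, if $Q\in\frakq$, then by definition some preimage lies in $\fraks$, so the inclusion fails.

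For (i), suppose first that $\theta_i=\theta_j=\pi$ for some $i\neq j\leq m$. Take any $X\in\exp^{-1}(Q)$. If $X$ were a D-preimage, \cref{coro:dpreimage-shifted-angle} would give angles $\alpha_i=\pi(1+2x_i)$ and $\alpha_j=\pi(1+2x_j)$ under $R$. Then $\alpha_i+\alpha_j=2\pi(1+x_i+x_j)$ and $\alpha_i-\alpha_j=2\pi(x_i-x_j)$. These two multiples of $2\pi$ have indices $1+x_i+x_j$ and $x_i-x_j$, which differ in parity, so at least one of them is nonzero. Hence $X\in\fraks$, a contradiction, and therefore $\exp^{-1}(Q)\subset\fraks$. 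Conversely, suppose no pair of principal angles equals $\pi$ simultaneously; by \cref{remark:angles_symmetry} this condition does not depend on $R$. We then construct a D-preimage by choosing shifts $\xi$ so that $\alpha=\theta+2\pi\xi$ avoids every relation $\alpha_i\pm\alpha_j\in2\pi(\mathbb{Z}\setminus\{0\})$. The simplest choice is $\xi=0$, i.e.\ $\alpha=\theta$. We must check that $\theta_i\pm\theta_j\in\{\pm2\pi\}$ is impossible for $i\neq j$, including against the auxiliary $\theta_k=0$. Because every $\theta_i\in(-\pi,\pi]$, we have $|\theta_i\pm\theta_j|\le 2\pi$, with equality only when $|\theta_i|=|\theta_j|=\pi$. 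That means $\theta_i=\theta_j=\pi$ (since $-\pi$ is excluded), with $i,j\le m$ because $\theta_k=0$. This is ruled out by assumption. So the principal preimage $R\Theta R^{\T}$ lies outside $\fraks$, and $\exp^{-1}(Q)\not\subset\fraks$.

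For (iii), the remaining case is $Q\in\frakq$ with no pair $\theta_i=\theta_j=\pi$. Since $Q\in\frakq$, some preimage lies in $\fraks$. The principal preimage constructed in the proof of (i) lies in $\skewm_n\setminus\fraks$. Hence both intersections are nonempty. The main subtlety in the whole argument is the backward direction of (i): it has to rule out \emph{all} preimages, including non-D ones whose Schur bases differ from $R$. The parity argument works only because it is applied to hypothetical D-preimages, which are the only ones \cref{coro:dpreimage-shifted-angle} lets us express in the basis $R$.
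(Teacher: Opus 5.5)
Your proof is correct and follows essentially the paper's argument. You use the same $\alpha_i\pm\alpha_j$ computation for the ``if'' half of (i), the principal preimage $R\Theta R^{\T}$ for the ``only if'' half, the definition of $\frakq$ for (ii), and complementation of (i) and (ii) for (iii). Your parity observation, that $1+x_i+x_j$ and $x_i-x_j$ cannot both vanish, supplies a step the paper's proof leaves implicit. Routing the ``if'' half of (i) through \cref{coro:dpreimage-shifted-angle} by contradiction is an equivalent alternative to the paper's choice of taking a Schur basis of each preimage $X$ directly.
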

\begin{proof}

    ``If'' statement in (i):  Consider any preimage of $Q\in\so_n$ in the form of
    \begin{equation*}
        X=\textstyle\outerprodskew{R}{\alpha}=\outerprodshift{R}{\theta}{x}\in\exp^{-1}(Q)
    \end{equation*}
    where $\theta\in\prng^k$ are the principal angles of $Q$ under the Schur basis $R$ (the auxiliary $\theta_k = 0$ and $x_k=0$ when $n = 2k-1$). If $\theta_i = \theta_j = \pi$, then $\alpha_i\pm\alpha_j = 2\pi(1 + x_i\pm x_j)$ for $x_i,x_j\in\mathbb{Z}$, which yields $X\in\mathfrak{S}$, i.e., $\exp^{-1}(Q)\subset \fraks$. 
    
    ``Only-if'' statement in (i): Let $\theta_{r}$ and $\theta_{s}$ attain the largest and the second largest magnitude among all principal angles of $Q$, i.e., $|\theta_r|\geq |\theta_s|\geq |\theta_i|$ for all $i\neq r$ and $i\neq s$. Contradiction is used to show that $|\theta_r|= |\theta_s| = \pi$ when $\exp^{-1}(Q)\subset\fraks$. Consider the preimage $X = R\Theta R^{\T}$ of $Q$. Suppose $|\theta_s| < \pi$, then $X = R\Theta R^{\T}\notin \fraks$, which contradicts with the assumption $\exp^{-1}(Q)\subset \fraks$. Indeed, if $|\theta_s|<\pi$, then for any pair of angles $(\theta_i,\theta_j), i\neq j\leq k$, it holds that
    \begin{equation*}
        |\theta_i\pm\theta_j|<|\theta_i|+|\theta_j|\leq |\theta_r|+|\theta_s|\leq \pi + |\theta_s| < 2\pi
    \end{equation*}
    In view of~\eqref{eq:tangent-conjugate-locus}, no rank-deficient constraint is met for $X = R\Theta R^{\T}$, i.e., $X\notin\fraks$. In conclusion, if $\exp^{-1}(Q) \subset\fraks$, then $|\theta_r| = |\theta_s| = \pi$, which yields $\theta_r = \theta_s = \pi$ because of $\theta\in\prng^k$.
    
    Statement (ii) follows from the definition $\frakq = \{\exp(X):X\in\fraks\}$, which yields $Q\in\frakq\Leftrightarrow\exp^{-1}(Q)\cap \fraks\neq \emptyset$, i.e., $Q\notin\frakq\Leftrightarrow\exp^{-1}(Q)\cap \fraks = \emptyset$. Finally, precluding statements (i) and (ii) yields statement (iii).
\end{proof}

\subsection{Separation of D-Preimage}

Recall that a diffeomorphism $f:\mathcal{X}\to\mathcal{Y}$ is equivalent to $f$ being smoothly bijective in $\mathcal{X}$ and $\der f(x):T_x\mathcal{X}\to T_{f(x)}\mathcal{Y}$ being invertible for all $x\in\mathcal{X}$. With $\dexpof{X}:\skewm_n\to T_{\exp(X)}\so_n$ being invertible for all $X\in\mathcal{C}_e,\forall e\in\mathcal{E}$, it remains to find a neighborhood of some reference point $S\in\mathcal{C}_e$ in the same D-component $\mathcal{C}_e$, in which $\exp$ is bijective. To meet this bijective constraint,~\cite[Theorem 4.11]{deng2025expskew} considers the points that are sufficiently close to the reference point $S$ (within $\pi$ radius under the matrix $2$-norm). A refined analysis presented in this section shows that $\exp$ is diffeomorphic on the entire component $\mathcal{C}_e$, when $\mathcal{C}_e\neq\mathcal{C}_*$. Moreover, all D-preimages of any $Q\in\so_n$ within $\mathcal{C}_*$---of which there are at most two---are completely characterized.

Consider a D-component $\mathcal{C}_e$ and $\exp^{-1}(Q)=\{X\in\skewm_n:\exp(X) = Q\}$ of a given $Q\in\so_n$. Notice that $\exp$ is bijective in $\mathcal{C}_e$ if and only if, for all $Q\in\so_n$, $\exp^{-1}(Q)\cap \mathcal{C}_e$ has at most one element. Thus, we proceed to find the counterexample of $Q$ and $\mathcal{C}_e$ where $\exp^{-1}(Q)\cap\mathcal{C}_e$ has two or more elements.

Let $X$ and $Y$ be two skew-symmetric matrices in $\mathcal{C}_e$ with the same exponential $Q = \exp(X) = \exp(Y)$. Thus, $X$ and $Y$ are D-connected by $S(t):[0,1]\to \mathcal{C}_e$ where $S(0) = X$ and $S(1) = Y$. Let $\alpha(t):[0,1]\to \realset^k$ be the continuous angles of $S(t)$ under the continuous Schur basis\footnote{In view of the continuity of eigendecomposition with respect to matrices (see, e.g., \cite[Theorem 7.2.2]{golub2013matrix}), any continuous curve of matrices admits a continuous choice of eigenvectors in which the eigenvalues are continuous. The continuous angles $\theta(t)$ and the associated continuous Schur basis $R(t)$ of $S(t)$ follow from the continuous eigendecomposition.} $R(t)$ along $t\in [0,1]$, i.e.,
\begin{equation}\label{eq:continuous-skew-curve}
    S(t) = \textstyle\sum_{i=1}^m R(t)_{[i]}\big[ \begin{smallmatrix}0 & -\alpha_i(t)\\\alpha_i(t) & 0\end{smallmatrix}\big] R(t)_{[i]}^{\T}.
\end{equation}

Let $S(t)$ be a D-curve with $\exp(S(0)) = \exp(S(1)) = Q$ and let $\alpha(t):[0,1]\to\realset^k$ be a vector of continuous angles of $S(t)$ under some Schur basis $R(t)$. The sum and difference of a pair of angles $\alpha_i(t)$ and $\alpha_j(t)$ forms a continuous curve $(u_{ij}^{\alpha}(t) = \alpha_i(t)+\alpha_j(t), \alpha_i(t) - \alpha_j(t)):[0,1]\to\realset^2$, for $i\neq j\leq k$. Moreover, $|\alpha_i(t)\pm \alpha_j(t)|\neq 2\pi l$ holds for all $t\in [0,1]$ and $l\in\mathbb{Z}\setminus \{0\}$, as $S(t)\in \skewm_n\setminus\fraks$, and the curve $(u_{ij}^{\alpha}(t),v_{ij}^{\alpha}(t))$ for $t\in [0, 1]$ intersects neither $u = 2\pi l$ nor $v = 2\pi l$ for all $l \in \mathbb{Z}\setminus \{0\}$. This is summarized in~\cref{def:uv-chessboard} and~\cref{lemma:dconnected-uv}.

\begin{definition}\label{def:uv-chessboard}
    The \emph{$(u,v)$-chessboard} refers to $\realset^2$ with the chessboard pattern in the tiles split by $u = 2\pi l$ and $v = 2\pi l$ for any integers $l\in\mathbb{Z}$, see~\cref{fig:chessboard}. Note that when $l\neq 0$, the $u = 2\pi l$ or $v = 2\pi l$ corresponds to a rank-deficient condition in~\eqref{eq:tangent-conjugate-locus}. For the continuous angles $\alpha(t):[0,1]\to\realset^k$, the \emph{angular trajectories of $\alpha(t)$ in the $(u,v)$-chessboard} refers to the $(k^2-k)$ continuous curves 
    \begin{equation*}
        \big(u_{ij}^{\alpha}(t)=\alpha_i(t) + \alpha_j(t), v_{ij}^{\alpha}(t)=\alpha_i(t) - \alpha_j(t)\big): [0,1]\to\realset^2,\; \text{for $i \neq j\leq k$.}
    \end{equation*}
\end{definition}

\begin{lemma}\label{lemma:dconnected-uv}
    A continuous curve $S(t):[0,1]\to\skewm_n$, is a D-curve if and only if all angular trajectories $(u^{\alpha}_{ij}, v^{\alpha}_{ij}), i\neq j\leq k$ of continuous angles $\alpha(t)$ of $S(t)$ in the $(u,v)$-chessboard do not meet the boundaries $u = 2\pi l$ nor $v = 2\pi l$ for $l \in\mathbb{Z}\setminus\{0\}$. 
\end{lemma}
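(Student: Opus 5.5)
The lemma follows from \cref{prop:tangent-conjugate-locus} applied pointwise along the curve. The only subtlety is that the statement involves a \emph{continuous} choice of angles $\alpha(t)$ and Schur basis $R(t)$. Membership in $\fraks$, however, must not depend on which Schur basis or angle vector we use. So the plan is in two steps: first reduce the D-curve condition to the pointwise condition $S(t)\notin\fraks$ for all $t\in[0,1]$; then, for each fixed $t$, express that condition in terms of $\alpha(t)$ through \eqref{eq:tangent-conjugate-locus}.

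By \cref{def:d-notation}, $S(t)$ is a D-curve exactly when it is continuous and $S(t)\in\skewm_n\setminus\fraks$ for every $t$, so the first step is immediate. For the second step, fix $t$ and write $S(t)=\sum_{i=1}^m R(t)_{[i]}\sbmatrix{0 & -\alpha_i(t)\\\alpha_i(t) & 0}R(t)_{[i]}^{\T}$ as in \eqref{eq:continuous-skew-curve}; here $R(t)$ is a genuine Schur basis of $S(t)$ and $\alpha(t)\in\realset^k$ is its angle vector, with auxiliary $\alpha_k\equiv0$ when $n$ is odd. Applying \cref{prop:tangent-conjugate-locus} with this particular $R=R(t)$ and $\theta=\alpha(t)$ gives $S(t)\in\fraks$ if and only if there are $i\neq j\le k$ and $l\in\mathbb{Z}\setminus\{0\}$ with $\alpha_i(t)+\alpha_j(t)=2\pi l$ or $\alpha_i(t)-\alpha_j(t)=2\pi l$. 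In the notation of \cref{def:uv-chessboard}, this says the point $(u_{ij}^\alpha(t),v_{ij}^\alpha(t))$ lies on a line $u=2\pi l$ or $v=2\pi l$ with $l\neq0$. Negating and quantifying over $t\in[0,1]$ yields both directions of the lemma at once.

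I expect the main obstacle to be justifying that \cref{prop:tangent-conjugate-locus} may be applied with \emph{any} Schur basis, in particular the continuously chosen $R(t)$, rather than a canonical one. I would handle this by observing that the proposition is stated for an arbitrary Schur basis $R$ and angle vector $\theta$ of $X$. Alternatively, the condition is basis-independent: the multiset $\{\pm\alpha_i\}$, together with the auxiliary $0$ when $n$ is odd, is determined by the eigenvalues $\{\pm \mathrm{i}\alpha_i\}$ of $S(t)$. Permuting the blocks or flipping signs, as in \cref{remark:angles_symmetry}, only relabels the pairs $(i,j)$ and swaps $u\leftrightarrow v$ or negates them, and the set of forbidden lines $\{u=2\pi l\}\cup\{v=2\pi l\}$, $l\neq0$, is invariant under these operations. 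The existence of the continuous choice $(\alpha(t),R(t))$ is taken from the footnote preceding \eqref{eq:continuous-skew-curve}, and continuity is needed only so that the trajectories are curves; the equivalence itself is purely pointwise.
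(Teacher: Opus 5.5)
Your proposal is correct and follows the paper's proof, which simply reads the lemma as the pointwise graphical restatement of \eqref{eq:tangent-conjugate-locus}, applied to $S(t)\notin\fraks$ at every $t\in[0,1]$. Your extra remark that the condition does not depend on the chosen Schur basis is a detail the paper leaves implicit. It holds because \cref{prop:tangent-conjugate-locus} is valid for any Schur basis, and permutations and sign flips leave the set of forbidden lines invariant.
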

\begin{proof}
    This is a graphic characterization of~\eqref{eq:tangent-conjugate-locus} for $S(t)\in(\skewm_n\setminus\fraks)$ at all $t\in[0,1]$.
\end{proof}

\begin{figure}[tbp]
    \centering
    \begin{subfigure}[b]{0.4\textwidth}
        \centering
        \includegraphics[width=\textwidth, height=\textwidth]{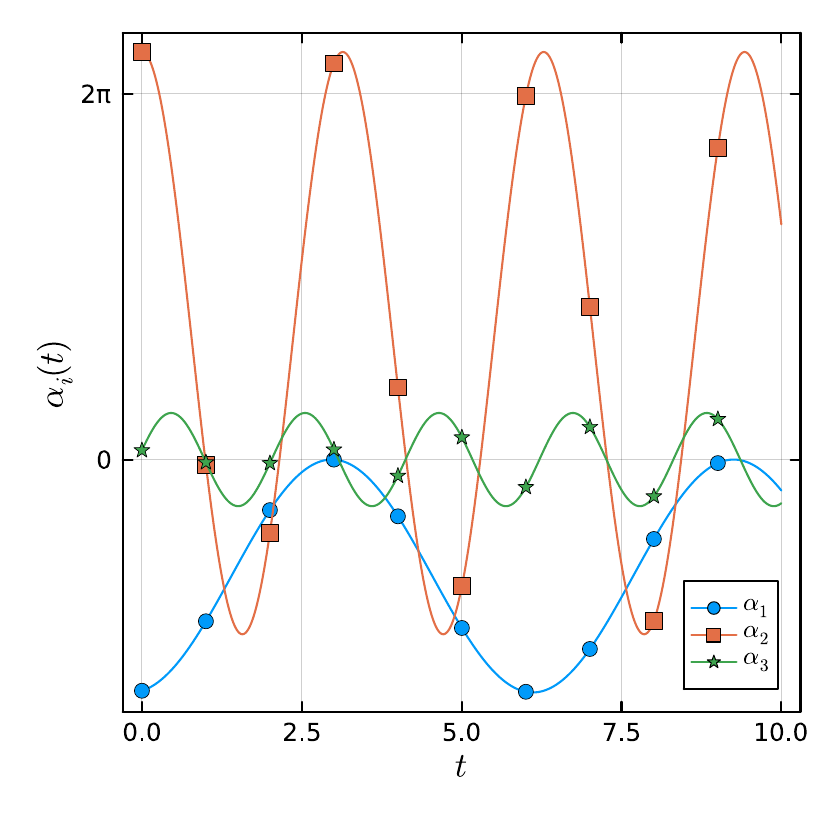}
        \caption{$\alpha_1,\alpha_2,\alpha_3:[0,1]\to \realset$.}
        \label{fig:continuous-angles}
    \end{subfigure}
    \hfill
    \begin{subfigure}[b]{0.4\textwidth}
        \centering
        \includegraphics[width=\textwidth, height=\textwidth]{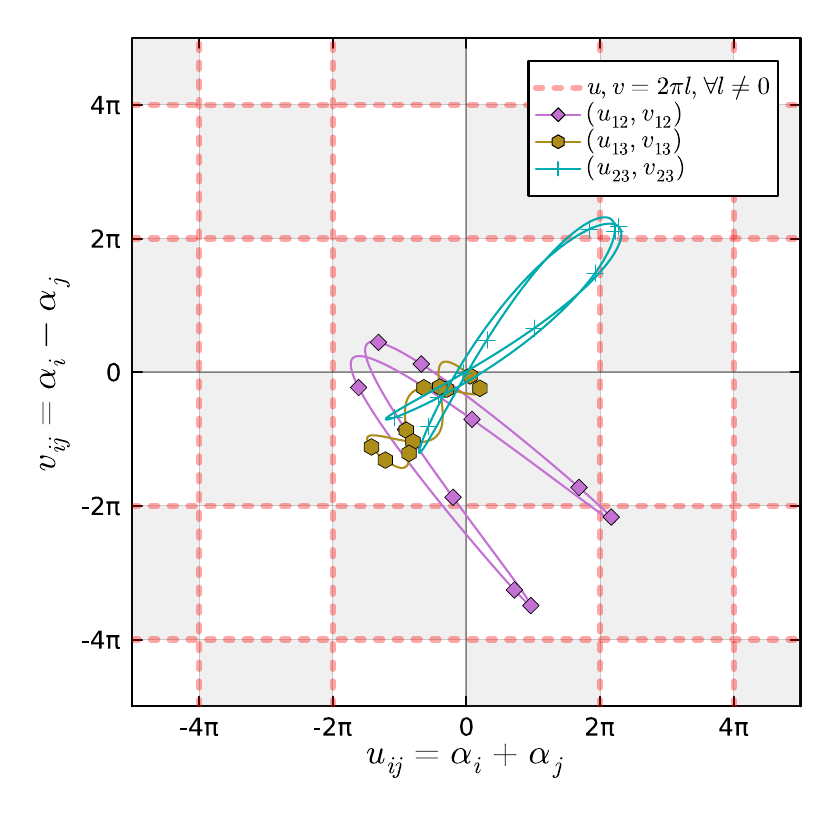}
        \caption{Angular trajectories.}
        \label{fig:chessboard}
    \end{subfigure}
    \caption{Illustration of angular trajectories on the $(u,v)$-chessboard.}\label{fig:angular-trajectories-on-chessboard}
\end{figure}

In addition to the continuous vector of angles obtained in~\eqref{eq:continuous-skew-curve}, we impose the further constraint that each angle is registered with the same principal angle at both $t = 0$ and $t = 1$, as specified in~\eqref{eq:continuous-angles}. \Cref{prop:continuous-angles} shows that this additional constraint can be achieved by relaxing the continuity of $R(t)$.

\begin{proposition}\label{prop:continuous-angles}
    Let $S(t):[0,1]\to \skewm_n\setminus\fraks$ be a continuous curve with $Q = \exp(S(0))=\exp(S(1))$, and let $\theta$ be the vector of principal angles of $Q$ under a Schur basis $R$. Then, $S(t)$ can be written as
    \begin{equation}\label{eq:continuous-angles}
        S(t) = \sum_{i=1}^m \tilde{R}(t)_{[i]}\big[ \begin{smallmatrix}0 & -\tilde{\alpha}_i(t)\\\tilde{\alpha}_i(t) & 0\end{smallmatrix}\big] \tilde{R}(t)_{[i]}^{\T}, \; \text{s.t. }\begin{cases}
            \tilde{\alpha}_i(0) = \theta_i+2\pi x_i& x_i\in\mathbb{Z},\,i\leq m,\\
            \tilde{\alpha}_i(1) = \theta_i+2\pi y_i& y_i\in\mathbb{Z},\,i\leq m,
        \end{cases}
    \end{equation}
    where the vector of angles $\tilde{\alpha}(t):[0,1]\to \realset^k$ is continuous, but the Schur basis $\tilde{R}(t):[0,1]\to \orth_n$ is not required to be continuous.
\end{proposition}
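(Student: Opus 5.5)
We start from the continuous description~\eqref{eq:continuous-skew-curve}: a continuous Schur basis $R(t)$ and continuous angles $\alpha(t)\in\realset^k$ with $S(t)=\sum_i R(t)_{[i]}\sbmatrix{0 & -\alpha_i(t)\\\alpha_i(t) & 0}R(t)_{[i]}^{\T}$. The target form~\eqref{eq:continuous-angles} asks that, after a relabelling, the angle in slot $i$ at both endpoints be congruent modulo $2\pi$ to the same principal angle $\theta_i$ of $Q$ under the fixed basis $R$. So we look for a permutation $P$ of $\{1,\ldots,k\}$ and signs $\epsilon_i\in\{\pm1\}$, constant in $t$, with $\tilde\alpha_i(t):=\epsilon_i\alpha_{P(i)}(t)$. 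Such $\tilde\alpha$ is automatically continuous. The basis $\tilde R(t)$ is obtained from $R(t)$ by the column-pair permutations and the swaps $\sbmatrix{R_{2i-1} & R_{2i}}\mapsto\sbmatrix{R_{2i} & R_{2i-1}}$ of~\cref{remark:angles_symmetry}, and it still represents $S(t)$. If needed, we also replace $\tilde R(0)$ and $\tilde R(1)$ by $R$, which is where continuity of $\tilde R$ is given up.

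\textbf{Endpoint analysis.} Since $S(0)$ is a D-preimage of $Q$, \cref{coro:dpreimage-shifted-angle} gives $S(0)=\outerprodshift{R}{\theta}{x}$ with $\xi\in\mathbb{Z}^k$. Likewise $S(1)=\outerprodshift{R}{\theta}{y}$ for some $\eta\in\mathbb{Z}^k$. Both therefore admit the common basis $R$, with angles $\theta+2\pi\xi$ and $\theta+2\pi\eta$. On the other hand, the multiset $\{\pm\alpha_i(0)\}$ equals the multiset of eigenvalue arguments of $S(0)$, namely $\{\pm(\theta_i+2\pi x_i)\}$, and the same holds at $t=1$. Hence there are permutations $P_0,P_1$ and signs $\epsilon^0,\epsilon^1$ with $\alpha_{P_0(i)}(0)=\epsilon^0_i(\theta_i+2\pi x_i)$ and $\alpha_{P_1(i)}(1)=\epsilon^1_i(\theta_i+2\pi y_i)$.

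\textbf{Main obstacle.} We need a single pair $(P,\epsilon)$ that works at both ends. Reducing modulo $2\pi$, the matchings induced by $(P_0,\epsilon^0)$ and $(P_1,\epsilon^1)$ both pair the slots of $\alpha$ with the principal angles $\theta_i$ up to sign. The key point is that this freedom lives entirely inside the symmetry group of $\theta$:
\begin{enumerate}[(a)]
    \item swapping two principal angles with equal magnitude, $|\theta_i|=|\theta_j|$;
    \item flipping the sign of $\theta_i$ when $\theta_i\in\{0,\pi\}$, since then $-\theta_i\equiv\theta_i \pmod{2\pi}$.
\end{enumerate}
We then apply the same relabelling to $R$. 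By~\cref{remark:angles_symmetry}, the resulting basis is again a Schur basis of $Q$ with the same principal-angle vector $\theta$ up to reordering. By~\cref{prop:shared-basis}, it is also a Schur basis of $S(0)$ and $S(1)$. So we may fix $(P,\epsilon):=(P_0,\epsilon^0)$, and at $t=1$ absorb the discrepancy $(P_1,\epsilon^1)^{-1}(P_0,\epsilon^0)$ into the choice of basis at the endpoint.

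\textbf{Closing step.} Write $\tilde\alpha_i(1)=\epsilon_i\alpha_{P(i)}(1)$. The matching $(P_1,\epsilon^1)$ shows that $\tilde\alpha(1)$ is a signed permutation of $\theta+2\pi\eta$, realized by a symmetry of $\theta$ modulo $2\pi$. Consequently $\tilde\alpha_i(1)\equiv\theta_i\pmod{2\pi}$, so $\tilde\alpha_i(1)=\theta_i+2\pi y_i$ for new integers $y_i$; for the sign-flip case $\theta_i\in\{0,\pi\}$ this uses $-\theta_i-2\pi y\equiv\theta_i \pmod{2\pi}$. A basis $\tilde R(1)$ realizing $\tilde\alpha(1)$ is obtained by permuting and swapping the columns of $R$ accordingly. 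This keeps $R$ a Schur basis of $S(1)$ by~\cref{prop:shared-basis}.

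We take $\tilde R(t)=R(t)$, relabelled by $(P,\epsilon)$, for $t\in(0,1)$, and the modified bases at the endpoints. The angles $\tilde\alpha$ stay continuous, which gives~\eqref{eq:continuous-angles}.
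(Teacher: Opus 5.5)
Your argument breaks at the ``key point''. Nothing forces the mismatch $(P_1,\epsilon^1)^{-1}(P_0,\epsilon^0)$ between the two endpoint matchings to lie in the symmetry group of $\theta$. Those matchings are linked only through the continuous evolution of $\alpha(t)$, and that evolution can carry slot $i$ from $\theta_i$ to a different principal angle.

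For example, take $n=4$, $\alpha(t)=(0.5-0.2t,\ 0.3+0.2t)$, and $R(t)$ a path in $\so_4$ from $I_4$ to the block swap $\sbmatrix{0 & I_2\\ I_2 & 0}$. Then $S(0)=S(1)$ and $\theta=(0.5,0.3)$ under $R=I_4$. The only constant signed relabelling compatible with $t=0$ is the identity, and it gives $\tilde\alpha(1)=(0.3,0.5)$.

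Your closing step cannot repair this. $\tilde\alpha(1)$ is pinned by continuity to $\lim_{t\to1}\tilde\alpha(t)$, so re-choosing the basis at $t=1$ only re-expresses $S(1)$ with a different angle vector, which destroys the continuity you need. The paper uses the discontinuity of $\tilde R$ differently. It swaps labels at an interior time $t_*$ where $\alpha_i(t_*)=\pm\alpha_j(t_*)$ holds exactly (here $t_*=1/2$), so $\tilde\alpha$ stays continuous while the net permutation is undone.

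You should also know that the statement as written is false already for $n=3$, so no argument can close it. Take $\phi\in(0,\pi)$, $\alpha_1(t)=2\pi+\phi+2(\pi-\phi)t$, and $R(t)$ the rotation by $\pi t$ about $e_1$, so that $R(1)={\rm diag}(1,-1,-1)$. Set $S(t)=R(t)_{[1]}\sbmatrix{0 & -\alpha_1(t)\\ \alpha_1(t) & 0}R(t)_{[1]}^{\T}$. Then:
\begin{itemize}
\item $S(t)\notin\fraks$, since $2\pi<\alpha_1(t)<4\pi$.
\item $\exp(S(0))=\exp(S(1))$, since in the basis $I_3$ the angles of $S(0)$ and $S(1)$ are $2\pi+\phi$ and $\phi-4\pi$.
\item Any admissible $\tilde\alpha_1$ has $|\tilde\alpha_1|=\alpha_1>0$, so it equals $\pm\alpha_1$ with a constant sign.
\end{itemize}
Hence $\tilde\alpha_1(1)-\tilde\alpha_1(0)=\pm2(\pi-\phi)\notin2\pi\mathbb{Z}$, contradicting the required form. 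Moreover, $S(0)$ and $S(1)$ are distinct D-connected preimages lying outside $\mathcal{C}_*$.

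The paper's proof misses this case. After reducing to transpositions, it never treats a pure sign flip $\delta_i=-1$ with $p_i=i$, which is exactly what happens when an angle passes through $\pi$ modulo $2\pi$.
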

\begin{proof}
    Consider the continuous Schur decomposition of $S(t)$ in the form of~\eqref{eq:continuous-skew-curve} with the continuous angles $\alpha(t)$ and continuous Schur basis $R(t)$. Consider the loop $Q(t) = \exp(S(t)) = R(t)E(t)R(t)^{\T}$ and let $\theta:=\theta(0)$ be the principal angles of $Q$ under $R(0)$, which is solved from $E(0)$. Thus $\alpha(0) = \theta + 2\pi \xi$ follows from~\cref{coro:dpreimage-shifted-angle}.

    The principal angles $\theta(1)$ of $Q$ under $R(1)$ solved from $E(1)$, however, is not guaranteed to be equal to $\theta = \theta(0)$. Rather, it may be the permuted and sign-flipped $\theta = \theta(0)$, i.e., $\theta(1)$ is in the form of $(\delta_1\theta_{p_1}, \ldots, \delta_k\theta_{p_k})$ where $(p_1,\ldots, p_k)$ is a permutation and $\delta_i \in\{\pm 1\}$. Thus, we have $\alpha_i(1) = \delta_i\theta_{p_i} + 2\pi y_i$ for $y_i\in\mathbb{Z}$. 
    
    It suffices to consider the case where $(p_1,\ldots, p_k)$ is a transposition, i.e., only the indices $i$ and $j$ are exchanged for some $i\neq j$. Since all blocks $E_{[i,i]}(t)$ are registered by $R_{[i]}(t)$ continuously along $Q(t)$, this exchange can only occur at some time $t_*\in(0,1)$ where the two blocks become indistinguishable, namely $E_{[i,i]}(t_*) = E_{[j,j]}(t_*)$ when $\delta_i =\delta_j= 1$, or $E_{[i,i]}(t_*) = E_{[j,j]}(t_*)^{\T}$ when $\delta_i=\delta_j = -1$. It follows that $\alpha_i(t_*) = \delta_i\alpha_j(t_*) + 2\pi l$ for some $l\in\mathbb{Z}$. However, $S(t)\in \skewm_n\setminus\fraks$ implies $\alpha_i(t)\pm\alpha_j(t) \neq 2\pi l$ holds for all $t\in [0,1]$ and $l\in\mathbb{Z}\setminus\{0\}$. Thus, we obtain $\alpha_i(t_*)=\delta_i\alpha_j(t_*)$.

    At $t=t_*$, define
    \begin{equation*}
    (\tilde{\alpha}_i(t),\tilde{R}_{[i]}(t))
        =
        \begin{cases}
        (\alpha_i(t), R_{[i]}(t)), & t\leq t_*,\\
        (\alpha_j(t), R_{[j]}(t)), & t>t_*,\ \delta_i=1,\\
        (-\alpha_j(t), [R_{2j}(t)\; R_{2j-1}(t)]), & t>t_*,\ \delta_i=-1,
        \end{cases}
    \end{equation*}
    and
    \begin{equation*}
        (\tilde{\alpha}_j(t),\tilde{R}_{[j]}(t))
        =
        \begin{cases}
        (\alpha_j(t), R_{[j]}(t)), & t\leq t_*,\\
        (\alpha_i(t), R_{[i]}(t)), & t>t_*,\ \delta_i=1,\\
        (-\alpha_i(t), [R_{2i}(t)\; R_{2i-1}(t)]), & t>t_*,\ \delta_i=-1.
        \end{cases}
    \end{equation*}
    Then, by construction, $\tilde{\alpha}_i(t)$ and $\tilde{\alpha}_j(t)$ are continuous and the desired alignments~\eqref{eq:continuous-angles} at the $i$th and $j$th angles are obtained. Note that $\tilde{R}(t)$ is not continuous but
    \begin{equation*}
        \tilde{R}(t)_{[i]}\big[\begin{smallmatrix}
        0 & -\tilde{\alpha}_i(t)\\\tilde{\alpha}_i(t) & 0
    \end{smallmatrix}\big]\tilde{R}(t)_{[i]}^{\T} + \tilde{R}(t)_{[j]}\big[\begin{smallmatrix}
        0 & -\tilde{\alpha}_j(t)\\\tilde{\alpha}_j(t) & 0
    \end{smallmatrix}\big]\tilde{R}(t)_{[j]}^{\T}\; \text{ is continuous.}
    \end{equation*}


    Since every permutation $p$ can be decomposed into finitely many
    transpositions, repeating the above operation finitely many times concludes the proof.
\end{proof}

If the continuous angles $\alpha(t)$ of $S(t)$ satisfies~\eqref{eq:continuous-angles}, then $(u_{ij}^{\alpha}(t), v_{ij}^{\alpha}(t))$ connects $(\theta_i + \theta_j + 2\pi (x_i + x_j), \theta_i + \theta_j + 2\pi (x_i - x_j))$ and $(\theta_i + \theta_j + 2\pi (y_i + y_j), \theta_i + \theta_j + 2\pi (y_i - y_j))$. Among all possible choices of $(x_i, x_j)$ and $(y_i, y_j)$ in $\mathbb{Z}^2$, the two endpoints of $(u_{ij}^{\alpha}(t), v_{ij}^{\alpha}(t))$ admit three possible patterns in~\cref{fig:scatter-pattern}, where every blue cross represents $(\theta_i+\theta_j+2\pi(x_i+x_j), (\theta_i - \theta_j + 2\pi (x_i - x_j)))$ for some $(x_i, x_j)\in\mathbb{Z}^2$. In particular, $|\theta_i| = |\theta_j|=\pi$ yields pattern (a); $|\theta_i|\neq |\theta_j|$ yields pattern (b); and $|\theta_i| = |\theta_j|\neq \pi$ yields pattern (c) with blue crosses on $u = 2\pi l$ or $v = 2\pi l$ for $l \in\mathbb{Z}$. Note that a pair of principal angles with $x_i=x_j=0$ lands in the middle red box.

\begin{figure}[tbp]
    \centering
    \begin{subfigure}[b]{0.32\textwidth}
        \centering
        \includegraphics[width=\textwidth]{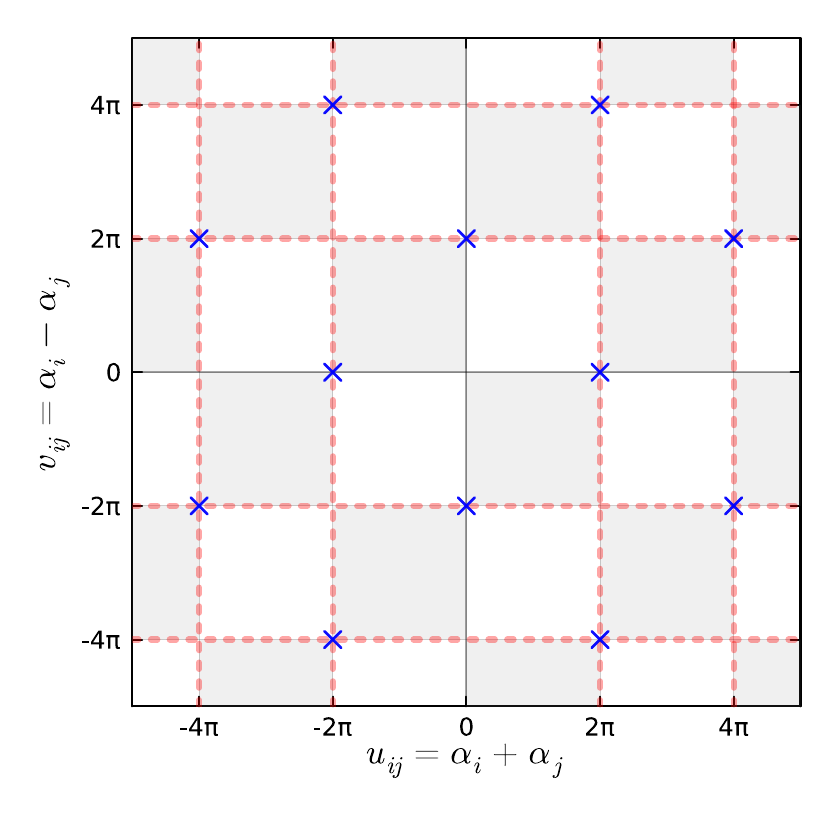}
        \caption*{Pattern (a): $|\theta_i|=|\theta_j|=\pi$.}
    \end{subfigure}
    \hfill
    \begin{subfigure}[b]{0.32\textwidth}
        \centering
        \includegraphics[width=\textwidth]{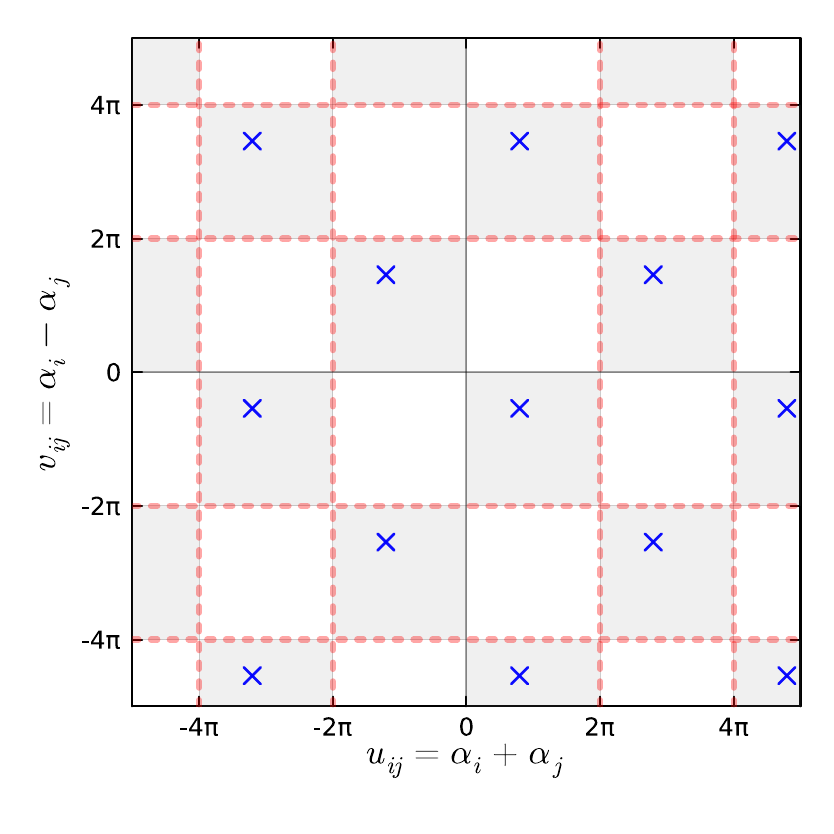}
        \caption*{Pattern (b): $|\theta_i|\neq |\theta_j|$.}
    \end{subfigure}
    \hfill
    \begin{subfigure}[b]{0.32\textwidth}
        \centering
        \includegraphics[width=\textwidth]{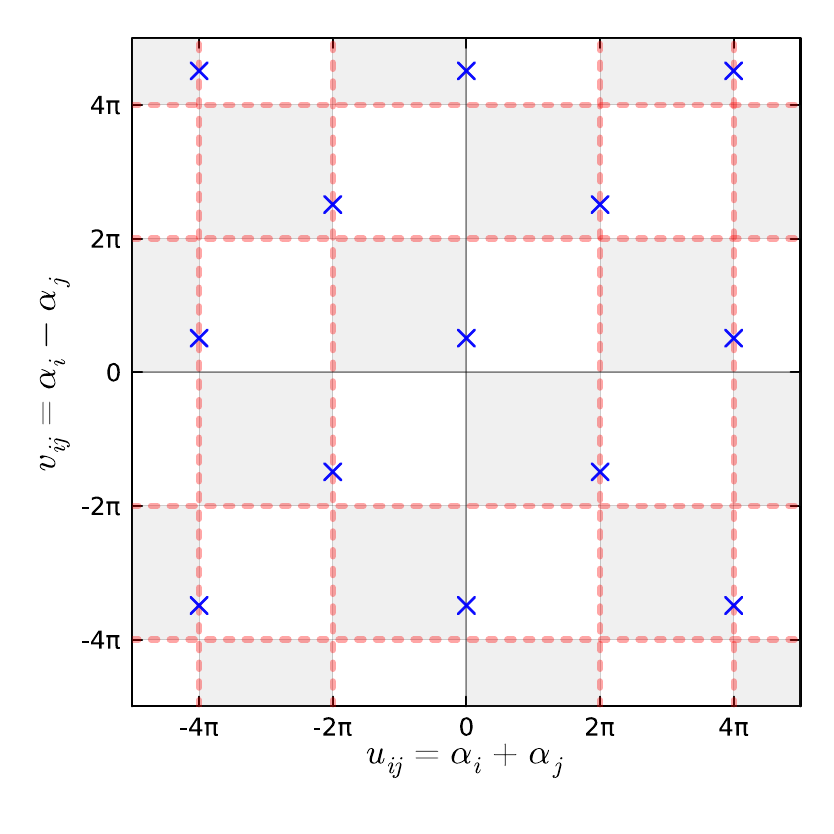}
        \caption*{Pattern (c): $|\theta_i| =|\theta_j|<\pi$.}
    \end{subfigure}
    \caption{Scatter patterns of $(\theta_i + 2\pi x_i, \theta_j + 2\pi x_j)$ in the $(u,\!v)$-chessboard for $(x_i,\!x_j)\!\in\!\mathbb{Z}^2$.\\
    Red lines: Rank-deficient conditions $u = 2\pi l$ and $v = 2\pi l$ for $l\in\mathbb{Z}\setminus\{0\}$.\\
    Blue crosses: the $(u,v)$ representations of $(\theta_i + 2\pi x_i,\theta_j + 2\pi x_j)$ for $(x_i,x_j)\in\mathbb{Z}^2$.}\label{fig:scatter-pattern}
\end{figure}

\begin{remark}\label{remark:scatter-pattern}
    \Cref{fig:scatter-pattern} also provides a graphic interpretation of~\cref{prop:preimage-scenario}. Pattern (a) corresponds to a pair of principal angles satisfying $\theta_i = \theta_j=\pi$. In view of~\cref{prop:preimage-scenario}[(i)], $\exp^{-1}(Q)\subset\fraks$ in this case, which is consistent with all blue crosses located on a red line. Pattern (b) corresponds to a pair of principal angles with distinct magnitudes, i.e., $|\theta_i| \neq |\theta_j|$. In view of~\eqref{eq:conjugate-locus}, if all principal angles of $Q$ have distinct magnitudes, then $Q\notin \frakq$ as described in~\cref{prop:preimage-scenario}[(ii)]. Finally, if no pair of principal angles attains pattern (a) but not all pairs attain pattern (b), then there exists a pair that attains pattern (c) with $|\theta_i| = |\theta_j|<\pi$, as described in~\cref{prop:preimage-scenario}[(iii)].
\end{remark}

\begin{proposition}\label{prop:dconnected-dpreimage}
    Consider $Q\in\so_n$ with the principal angles $\theta\in\prng^k$ (the auxiliary $\theta_k = 0$ when $n = 2k-1$) under some Schur basis $R$. If $X,Y\in\exp^{-1}(Q)$ are D-connected and $X\neq Y$, then the following statements hold:
    \begin{enumerate}[(i)]
        \item $X$ and $Y$ are in the special D-component $\mathcal{C}_*$, i.e., $X, Y\in \mathcal{C}_*$;
        \item There exists a unique $\hat\imath\leq k$ that attains the largest angles: $\hat\imath = {\arg\max}_{i\leq k}|\theta_i|$;
        \item $X$ and $Y$ are the only D-connected preimages of $Q$, and they are given by
        \begin{equation}\label{eq:dconnected-dpreimage}
            \begin{cases}
                X = \textstyle \outerprodskew{R}{\theta},\\
                Y = \textstyle \outerprodskew{R}{\theta} - \mathrm{sign}(\theta_{\hat\imath})R_{[\hat\imath]} \big[\begin{smallmatrix}0 & -2\pi\\2\pi & 0\end{smallmatrix}\big] R_{[\hat\imath]}^{\T}.
            \end{cases}
        \end{equation}
    \end{enumerate}
\end{proposition}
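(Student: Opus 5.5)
Let $S(t)$ be a D-curve joining $X=S(0)$ to $Y=S(1)$. By \cref{prop:continuous-angles}, write $S(t)$ with continuous angles $\tilde\alpha(t)$ satisfying $\tilde\alpha_i(0)=\theta_i+2\pi x_i$ and $\tilde\alpha_i(1)=\theta_i+2\pi y_i$, with shift vectors $\xi,\eta\in\mathbb{Z}^k$ (auxiliary entries zero when $n=2k-1$). By \cref{coro:dpreimage-shifted-angle} applied with the fixed basis $R$, $X$ and $Y$ are determined by $\xi$ and $\eta$. Hence $X\neq Y$ forces $\xi\neq\eta$.

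The core step is a chessboard argument based on \cref{lemma:dconnected-uv}. For each pair $i\neq j$, the trajectory $(u_{ij},v_{ij})$ is continuous and never crosses $u=2\pi l$ or $v=2\pi l$ with $l\neq0$. Its only freedom is therefore to cross the two axes $u=0$ and $v=0$, so each trajectory stays inside one of the "cross-shaped" unions of tiles. Away from the central box $(-2\pi,2\pi)^2$ this union is a single open square of side $2\pi$. Inside the central box it is the whole box.

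Consider first a pair whose trajectory starts outside the central box. It stays in one open square of side $2\pi$. Each such square contains at most one blue cross of the lattice $\{(\theta_i+\theta_j+2\pi(x_i+x_j),\,\theta_i-\theta_j+2\pi(x_i-x_j))\}$, at least when the points lie off the grid lines (patterns (b),(c); pattern (a) is excluded because $X\notin\fraks$). So $x_i\pm x_j=y_i\pm y_j$, which gives $x_i=y_i$ and $x_j=y_j$.

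Next, since $\xi\neq\eta$, pick $i$ with $x_i\neq y_i$. Pair $i$ with every other $j$ (the auxiliary index included when $n$ is odd). Each such trajectory must lie in the central box $|u|,|v|<2\pi$ at both endpoints. In particular, $|\alpha_i\pm\alpha_j|<2\pi$ holds for all pairs involving $i$, at both $X$ and $Y$.

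I would use this to show that at most one index can change, and to pin down $\hat\imath$. The central box contains the crosses with $|\theta_i+2\pi x_i|+|\theta_j+2\pi x_j|<2\pi$. Counting these lattice points for pattern (b) or (c) gives at most two of them. They differ by shifting the larger of $|\theta_i|,|\theta_j|$ by $\mp2\pi\,\mathrm{sign}$, while the smaller angle must keep shift $0$.

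Applying this to every pair $(i,j)$ should give the following. All indices other than the one of strictly largest $|\theta|$ have zero shift at both $X$ and $Y$. The changing index must be that unique maximizer $\hat\imath$: uniqueness is needed, since if $|\theta_{\hat\imath}|=|\theta_j|$ the two candidates would land on a grid line, contradicting D-preimage. Its two options are $\theta_{\hat\imath}$ and $\theta_{\hat\imath}-2\pi\,\mathrm{sign}(\theta_{\hat\imath})$.

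This gives (ii) and the formula \eqref{eq:dconnected-dpreimage}, up to swapping $X$ and $Y$. It also gives (iii) uniqueness, because any third D-connected preimage would have to be one of these two. For (i), both matrices satisfy $|\alpha_i\pm\alpha_j|<2\pi$ for all pairs, which is exactly the defining condition \eqref{eq:star-component} of $\mathcal{C}_*$. The case $\theta_{\hat\imath}=\pi$ needs a sign convention, handled using \cref{remark:angles_symmetry}.

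The main obstacle is the "at most one cross per reachable region" claim in the first step. One must check that a trajectory starting in a non-central tile truly cannot reach the central region or another tile. It can only pass through points on $u=0$ or $v=0$, and I would verify that those lines outside the central box bound tiles that already lie on $u=2\pi l$ or $v=2\pi l$ lines. I would also carefully handle the odd-$n$ auxiliary index, whose shift is forced to be zero.
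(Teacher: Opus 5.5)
Your argument follows the paper's proof step for step: \cref{prop:continuous-angles}, the $(u,v)$-chessboard, and the claim that two distinct crosses can only be joined inside the central box. It inherits a gap that cannot be closed, because the statement itself is false. Take $n=3$ and
\[
X=\sbmatrix{0&-5\pi/2&0\\ 5\pi/2&0&0\\ 0&0&0},\quad
Y=\sbmatrix{0&7\pi/2&0\\ -7\pi/2&0&0\\ 0&0&0},\quad
P=\sbmatrix{0&1&0\\ 1&0&0\\ 0&0&-1}\in\so_3 .
\]
Both exponentiate to $Q=\sbmatrix{0&-1&0\\ 1&0&0\\ 0&0&1}$, and $X\neq Y$. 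Let $R(t)$ be a path in $\so_3$ from $I_3$ to $P$ and $a(t)=5\pi/2+\pi t$. Then $S(t)=R(t)\sbmatrix{0&-a(t)&0\\ a(t)&0&0\\ 0&0&0}R(t)^{\T}$ runs from $X$ to $Y$ with angles $(a(t),0)$ and $a(t)\in[5\pi/2,7\pi/2]$, so by \cref{prop:tangent-conjugate-locus} it never meets $\fraks$. Thus $X,Y$ are distinct D-connected preimages of $Q$, but $|5\pi/2\pm 0|>2\pi$ puts them outside $\mathcal{C}_*$, contradicting (i) and (iii). In axis--angle terms, the shell $2\pi<\|\omega\|<4\pi$ is a D-component on which $\exp$ is two-to-one onto $\so_3\setminus\{I_3\}$, so \cref{thm:component-diffeomorphism} fails as well. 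This is not an artifact of the auxiliary angle or of special $Q$. For $n=4$, the angle vectors $(2.4\pi,0.1\pi)$ and $(-3.6\pi,0.1\pi)$ under the identity basis give distinct preimages of a $Q\notin\frakq$. They are joined by a path that reverses both Schur planes while the angles move linearly from $(2.4\pi,0.1\pi)$ to $(3.6\pi,-0.1\pi)$; sums and differences stay in $(2\pi,4\pi)$.

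The break is in your first line. \cref{prop:continuous-angles} only re-registers angles across transpositions. A Schur frame moving along a D-curve can also reverse the orientation of a single plane: through the auxiliary column for odd $n$, or together with a second plane for even $n$. After that, $Q$ has principal angle $-\theta_i$ under $\tilde R(1)$. In the example, $|\tilde\alpha_1(t)|\in(2\pi,4\pi)$ forces $\tilde\alpha_1$ to keep its sign. So $\tilde\alpha_1(0)=5\pi/2\equiv\pi/2$ gives $\tilde\alpha_1(1)=7\pi/2\equiv-\pi/2 \pmod{2\pi}$, and the representation \eqref{eq:continuous-angles} you start from does not exist. In chessboard terms, the endpoint lies on the reflected lattice, which has its own cross in the same non-central tile. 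Your inference ``$X\neq Y$ forces $\xi\neq\eta$'' is a second, independent leap. The vectors $\xi$ and $\eta$ are measured against $\tilde R(0)$ and $\tilde R(1)$, not against one fixed $R$, and a frame reversal can be invisible in the principal angles. For example, for $Q=\mathrm{diag}(-I_2,I_2)$ the angle vectors $(3\pi,2\pi)$ and $(-3\pi,-2\pi)$ are conjugate in $\so_4$ along a D-curve with constant angles; they give $\xi=\eta$ yet $X\neq Y$. The paper's proof makes the same two leaps. A smaller error: outside the central box the red lines do not cut the plane into $2\pi$-squares. The axes $u=0$ and $v=0$ remain crossable and produce $4\pi\times 2\pi$ strips, so the check you planned would fail. ``At most one cross of a single lattice per region'' survives there only because $x_i+x_j\equiv x_i-x_j \pmod 2$.
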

\begin{proof}
    Since $X$ and $Y$ are D-connected, let $S(t):[0,1]\to \mathcal{C}_e$ be the D-curve that connects $S(0) = X$ and $S(1) = Y$ and let $\alpha_i(t):[0,1]\to\realset, i\leq k$ be the continuous angles subject to~\eqref{eq:continuous-angles}. In view of~\eqref{eq:dpreimage-shifted-angle}, for any given Schur basis $R$ of $Q$, the distinct preimages $X\neq Y$ must differ in at least one angle, denoted by the $\hat\imath$th angle. Then, $\alpha_{\hat\imath}(0)\neq \alpha_{\hat\imath}(1)$. Consequently, the angular trajectory of $(u_{\hat\imath j} = \alpha_{\hat\imath}+\alpha_j, v_{\hat\imath j} = \alpha_{\hat\imath}-\alpha_j)$ for all $j\neq \hat\imath$ must connect two distinct blue crosses without meeting a red line. This can only happen in the middle red box $\{(u, v):|u|<2\pi, |v|<2\pi\}$ in~\cref{fig:scatter-pattern}[Pattern (b)]. In particular, $|\alpha_{\hat\imath}(0)\pm\alpha_j(0)|<2\pi$, for all $j\neq \hat\imath$ yields $S(0) = X\in \mathcal{C}_* = \{S\in\skewm_n: |\theta_i\pm\theta_j|<2\pi,\forall i\neq j\leq k\}$. Similarly, $|\alpha_{\hat\imath}(1)\pm\alpha_j(1)|<2\pi$, for all $j\neq \hat\imath$ yields $S(1) = Y\in\mathcal{C}_*$. Thus statement (i) holds.

    Recall that all blue crosses in~\cref{fig:scatter-pattern} are given by $(\theta_i+2\pi x_i, \theta_j + 2\pi x_j)$ for some $(x_i, x_j)\in\mathbb{Z}^2$ and $\theta_i,\theta_j\in \prng$. Therefore, the two blue crosses in the middle red box in~\cref{fig:scatter-pattern}[Pattern (b)] correspond to either $(\theta_i,\theta_j)$ and $(\theta_i-2\pi\mathrm{sign}(\theta_i), \theta_j)$, or $(\theta_i,\theta_j)$ and $(\theta_i, \theta_j-2\pi\mathrm{sign}(\theta_j))$. In other words, one angle is changed by $2\pi$ and the other angle remains the same. In the above derivation, we have concluded that $(u_{\hat\imath j},v_{\hat\imath j})$ connects two blue crosses for all $j\neq \hat\imath$, and $\alpha_{\hat\imath}(0) \neq \alpha_{\hat\imath}(1)$. Thus, $\alpha_j(t)$ remains the same, i.e., $\alpha_j(0) = \alpha_j(1) =\theta_j$, for all $j\neq \hat\imath$. Moreover, since the blue crosses stayed within the middle red box $\{(u, v):|u|<2\pi, |v|<2\pi\}$, it holds that
    \begin{equation*}
        |\theta_{\hat\imath}\pm \theta_j|<2\pi,\qquad |\theta_{\hat\imath}-2\pi \mathrm{sign}(\theta_{\hat\imath}) \pm\theta_j|<2\pi,\qquad \forall j\neq \hat\imath.
    \end{equation*}
    Notice that $|\theta_{\hat\imath}-2\pi \mathrm{sign}(\theta_{\hat\imath})| = 2\pi - |\theta_{\hat\imath}|$ since $\theta_{\hat\imath}\in\prng$. The triangular inequality yields
    \begin{equation*}
        \max\{|\theta_{\hat\imath}-2\pi \mathrm{sign}(\theta_{\hat\imath})+\theta_j|,\ |\theta_{\hat\imath}-2\pi \mathrm{sign}(\theta_{\hat\imath})-\theta_j|\}
        = 2\pi-|\theta_{\hat\imath}|+|\theta_j|<2\pi.
    \end{equation*}
    Hence $|\theta_j|<|\theta_{\hat\imath}|$ holds for all $j\neq {\hat\imath}$, i.e., the statement (ii) follows.
    
    Finally, since $\alpha_j(0) = \alpha_j(1) = \theta_j$ are fixed for all $j\neq \hat\imath$, while $\alpha_{\hat\imath}(0), \alpha_{\hat\imath}(1)$ have only two candidates $\theta_{\hat\imath}$ and $\theta_{\hat\imath}-2\pi \mathrm{sign}(\theta_{\hat\imath})$, the characterizations of $X$ and $Y$ in~\eqref{eq:dconnected-dpreimage} follows.
\end{proof}

\begin{theorem}\label{thm:component-diffeomorphism}
    The exponential map $\exp:\mathcal{C}_e\to\calz_e:=\{\exp(X):X\in\mathcal{C}_e\}$ on a D-component $\mathcal{C}_e$ is a diffeomorphism when $\mathcal{C}_e\neq \mathcal{C}_*$.
\end{theorem}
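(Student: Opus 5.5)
The plan is to use the criterion recalled at the start of this subsection: $\exp$ is smooth, so it suffices to show (a) $\dexpof{X}$ is invertible for every $X\in\mathcal{C}_e$, and (b) $\exp$ restricted to $\mathcal{C}_e$ is injective. Then $\exp:\mathcal{C}_e\to\calz_e$ is a smooth bijection with invertible differential everywhere, hence a diffeomorphism onto $\calz_e$.

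Step (a) is immediate. $\mathcal{C}_e$ is a D-component, so $\mathcal{C}_e\subset\skewm_n\setminus\fraks$, and by \cref{def:conjugate-locus} the differential $\dexpof{X}:\skewm_n\to T_{\exp(X)}\so_n$ is full rank at each such $X$. Since $\dim\skewm_n=\dim\so_n$, it is invertible. By the inverse function theorem, $\exp$ is then a local diffeomorphism on the open set $\mathcal{C}_e$ (openness is from \cite[Corollary~4.4]{deng2025expskew}). Consequently $\calz_e$ is open in $\so_n$, and a smooth bijective local diffeomorphism has a smooth inverse.

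Step (b) is injectivity, argued by contradiction. Suppose $X\neq Y$ in $\mathcal{C}_e$ with $\exp(X)=\exp(Y)=Q$. Because $\mathcal{C}_e$ is a maximal path-connected subset of $\skewm_n\setminus\fraks$, $X$ and $Y$ are joined by a D-curve, so they are D-connected preimages of $Q$. \Cref{prop:dconnected-dpreimage}(i) then gives $X,Y\in\mathcal{C}_*$. D-components are pairwise disjoint, and $X\in\mathcal{C}_e\cap\mathcal{C}_*$ forces $\mathcal{C}_e=\mathcal{C}_*$, contradicting the hypothesis. Hence $\exp^{-1}(Q)\cap\mathcal{C}_e$ has at most one element for every $Q$.

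The substantive work, namely the chessboard argument confining distinct D-connected preimages to $\mathcal{C}_*$, is already done in \cref{prop:dconnected-dpreimage}, so no step here is a real obstacle. The only point needing care is that $\mathcal{C}_*$ as defined in \eqref{eq:star-component} is genuinely a single D-component, so that "$X\in\mathcal{C}_*$" means membership in the D-component and the disjointness argument applies. I would justify this as follows. The set in \eqref{eq:star-component} is path-connected: scale the angles $t\theta$ for $t\in[0,1]$, which preserves $|\theta_i\pm\theta_j|<2\pi$, and use connectedness of $\orth_n$ (both cosets reach $0$ via scaling). It lies in $\skewm_n\setminus\fraks$ by \cref{prop:tangent-conjugate-locus}. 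It is maximal because any D-curve leaving it must cross $|\theta_i\pm\theta_j|=2\pi$, which lies in $\fraks$ by \cref{lemma:dconnected-uv}.
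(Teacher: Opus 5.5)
Your proof is correct and follows the paper's own argument: $\dexpof{X}$ is invertible on $\mathcal{C}_e\subset\skewm_n\setminus\fraks$, $\exp$ is injective on $\mathcal{C}_e\neq\mathcal{C}_*$ by \cref{prop:dconnected-dpreimage}(i), and the inverse function theorem finishes. Your extra check that the set in \eqref{eq:star-component} really is a single D-component is a reasonable addition that the paper takes for granted; just note that $\orth_n$ is not connected, and path-connectedness comes from your scaling argument, which shows the set is star-shaped about $\zerov$.
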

\begin{proof}
    In view of~\cref{prop:dconnected-dpreimage}, $\exp:\mathcal{C}_e\to\calz_e$ is bijective in $\mathcal{C}_e$ when $\mathcal{C}_e\neq \mathcal{C}_*$. In view of the inverse function theorem, the bijective $\exp:\mathcal{C}_e\to\calz_e$ with invertible $\dexpof{X}$ for all $X\in\mathcal{C}_e$ is a diffeomorphism.
\end{proof}

Compared with the nearby matrix logarithm, see~\cref{def:nearby-logarithm}, restricted on $\{X\in\skewm_n:\|X-S\|_2<\pi\}\cap \mathcal{C}_e$, \cref{thm:component-diffeomorphism} relaxes the distance constraint when $\mathcal{C}_e\neq \mathcal{C}_*$. 

\subsection{Three-Dimensional Rotations: Preimages and Quaternions}\label{subsec:so3}

Consider $\so_3$ which is widely used in various applications. In this case, all skew-symmetric matrices in $\skewm_3$ have only one non-auxiliary angle $\alpha_1\in\realset$ and one auxiliary angle $\alpha_2=0$. For $Q\in\so_3$ in the form of
\begin{equation}\label{eq:so3-q}
    Q = \big[\begin{smallmatrix}
        R_1 & R_2
    \end{smallmatrix}\big]
    \big[\begin{smallmatrix}
        r_1 & -s_1\\
        s_1 & r_1
    \end{smallmatrix}\big] \big[\begin{smallmatrix}
        R_1 & R_2
    \end{smallmatrix}\big]^{\T} + R_3 R_3^{\T},\quad \text{s.t. $r_1^2 + s_1^2 = 1$,}
\end{equation}
the two D-connected preimages of $Q$ in~\eqref{eq:dconnected-dpreimage} simplify to
\begin{equation}\label{eq:so3-double-dpreimages}
    X =
    \big[\begin{smallmatrix}
        R_1 & R_2
    \end{smallmatrix}\big]
    \big[\begin{smallmatrix}
        0 & -\theta_1\\
        \theta_1 & 0
    \end{smallmatrix}\big]
    \big[\begin{smallmatrix}
        R_1 & R_2
    \end{smallmatrix}\big]^{\T},\,
    Y =
    \big[\begin{smallmatrix}
        R_1 & R_2
    \end{smallmatrix}\big]
    \big[\begin{smallmatrix}
        0 & -\theta_1+2\pi\operatorname{sign}(\theta_1)\\
        \theta_1-2\pi\operatorname{sign}(\theta_1) & 0
    \end{smallmatrix}\big]
    \big[\begin{smallmatrix}
        R_1 & R_2
    \end{smallmatrix}\big]^{\T}
\end{equation}
where $\theta_1\in (-\pi,\pi]$ is solved from $\cos(\theta_1) = r_1$ and $\sin(\theta_1) = s_1$.

Quaternions provide another continuous representation of rotations in $\so_3$. Here, a quaternion is an element $q\in\realset^4$ (equipped with the quaternion product). For $Q\in\so_3$ in the form of~\eqref{eq:so3-q}, the standard axis--angle formula for unit quaternions~\cite[Sec.~6.12, Eq.~(175)]{diebel2006representing} yields a representation:
\begin{equation}\label{eq:so3-quaternion}
    q(\alpha_1, R)
    =
    \big[\begin{smallmatrix}
        \cos(\alpha_1/2)\\
        u\sin(\alpha_1/2)
    \end{smallmatrix}\big],\; \text{s.t. $\cos(\alpha_1) = r_1$, $\sin(\alpha_1) = s_1$ and $u = \det(R) R_3$.} 
\end{equation}
In view of~\eqref{eq:dpreimage-shifted-angle}, $\alpha_1 = \theta_1 + 2\pi x_1$ holds for $x_1\in\mathbb{Z}$. Moreover, there are exactly two unit quaternions representing the same $Q$ in~\eqref{eq:so3-q}, namely
\begin{equation}\label{eq:so3-double-quaternion}
    q(\theta_1, R) = \big[\begin{smallmatrix}
        \cos(\theta_1/2)\\
        u\sin(\theta_1/2)
    \end{smallmatrix}\big]\text{ and } -q(\theta_1, R) = \begin{cases}
        q(\theta_1-2\pi \operatorname{sign}(\theta_1), R)& \theta_1\neq 0,\\
        q(2\pi, R) & \theta_1 = 0,
    \end{cases}
\end{equation}
where $\theta_1\in (-\pi, \pi]$. Thus, rotations in $\so_3$ are represented continuously on the unit sphere of quaternions $\{q\in\realset^4: \|q\|_2^2 = 1\}$, where $q$ and $-q$ represent the same rotation matrix. The two quaternions in~\eqref{eq:so3-double-quaternion} with $\alpha_1\in (-2\pi, 2\pi)$ naturally identify the two D-connected preimages in~\eqref{eq:so3-double-dpreimages}.

\begin{theorem}\label{thm:so3-quaternion-identification}
Let $Q\in\so_3\setminus\{I_3\}$ be written in the form of~\eqref{eq:so3-q} with the vector of principal angles $(\theta_1, 0)$ under the Schur basis $R\in\orth_3$. Then, the D-connected preimages $X$ and $Y$ in~\eqref{eq:so3-double-dpreimages} are identified with the two quaternions in~\eqref{eq:so3-double-quaternion} as
\begin{equation}\label{eq:so3-preimage-quaternion}
    X \longleftrightarrow q(\theta_1,R),
    \qquad
    Y \longleftrightarrow -q(\theta_1,R).
\end{equation}
\end{theorem}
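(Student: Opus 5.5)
The identification in~\eqref{eq:so3-preimage-quaternion} will be defined through the map $\alpha_1\mapsto q(\alpha_1,R)$ of~\eqref{eq:so3-quaternion}, with the Schur basis $R$ held fixed. We then need three facts: it sends the angle of each D-preimage to the stated quaternion, it is well defined, and it is compatible with the continuous structure. Concretely, every preimage of $Q$ sharing the basis $R$ has the form $[R_1\;R_2]\sbmatrix{0&-\alpha_1\\\alpha_1&0}[R_1\;R_2]^{\T}$ with $\alpha_1=\theta_1+2\pi x_1$, by~\eqref{eq:shifted-angles-22} and \cref{coro:dpreimage-shifted-angle}. We associate to it $q(\alpha_1,R)$.

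\textbf{Matching the two preimages.} For $X$ we have $\alpha_1=\theta_1$, so $X\mapsto q(\theta_1,R)$ immediately. For $Y$ we have $\alpha_1=\theta_1-2\pi\operatorname{sign}(\theta_1)$. Since $\cos(\alpha_1/2\mp\pi)=-\cos(\alpha_1/2)$ and $\sin(\alpha_1/2\mp\pi)=-\sin(\alpha_1/2)$, it follows that $q(\theta_1-2\pi\operatorname{sign}(\theta_1),R)=-q(\theta_1,R)$. This is exactly~\eqref{eq:so3-double-quaternion} in the case $\theta_1\neq 0$. The hypothesis $Q\neq I_3$ guarantees $\theta_1\neq 0$, so $\operatorname{sign}(\theta_1)$ is defined and $X\neq Y$. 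If $\theta_1=\pi$ we get $Y=-X$, and the same formula still applies.

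\textbf{Well-definedness.} We must check that the pair of quaternions does not depend on the chosen Schur basis. By \cref{remark:angles_symmetry}, another basis with the same principal angle $\theta_1$ differs from $R$ by a rotation inside $\operatorname{span}(R_1,R_2)$ together with a possible sign of $R_3$. Under such a change, $\det(R)R_3$ is unchanged, because flipping $R_3$ also flips $\det R$. A swap of $R_1$ and $R_2$ flips $\theta_1\mapsto-\theta_1$ and flips $\det(R)$, hence flips $u$. In that case $(\cos(\theta_1/2),u\sin(\theta_1/2))$ is preserved, and the corresponding $Y$ is preserved as well, since $-\theta_1+2\pi\operatorname{sign}(\theta_1)$ is consistent with the swapped basis. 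Checking this sign bookkeeping is the main obstacle, and I would do it by direct substitution.

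\textbf{Compatibility with continuity.} Finally, I would remark that along any D-curve in $\mathcal{C}_*$ with continuous $\alpha_1(t)\in(-2\pi,2\pi)$, the quaternion $q(\alpha_1(t),R(t))$ varies continuously. So the correspondence is a continuous identification of $\mathcal{C}_*$, in which $|\alpha_1|<2\pi$, with the unit quaternions other than $-(1,0,0,0)$, and it maps the preimage pair $\{X,Y\}$ onto the antipodal pair $\{q,-q\}$.
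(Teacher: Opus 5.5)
Your proof is essentially the paper's. The identification is immediate for a fixed Schur basis; your half-angle computation is just \eqref{eq:so3-double-quaternion}. The real content is basis independence over the invariance group $\hat R=\sbmatrix{R_{[1]}O & \delta R_3}$ with $O\in\orth_2$ and $\delta=\pm1$, which you split into an in-plane rotation, a column swap and a sign of $R_3$, whereas the paper tracks $\det(O)$ and $\delta$ together. That this group exhausts all Schur bases comes from \cite{mataigne2024eigenvalue}, or for $n=3$ from the fact that $\hat R_3$ must span the fixed axis of $Q\neq I_3$; it does not follow from \cref{remark:angles_symmetry}, which only exhibits some alternative bases.

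You share one slip with the paper. If $\theta_1=\pi$, a swap (indeed any $O$ with $\det(O)=-1$) leaves the principal angle at $\pi$ rather than sending it to $-\pi$. Then $q(\hat\theta_1,\hat R)=-q(\theta_1,R)$, not $q(\theta_1,R)$, and $X$ and $Y$ trade roles. The identification still holds at the level of matrices. The cleanest way to see this is that $\alpha_1(R_2R_1^{\T}-R_1R_2^{\T})$ is the cross-product matrix of $\alpha_1\det(R)R_3$. Hence $q(\alpha_1,R)=\big(\cos(\alpha_1/2),\,\det(R)R_3\sin(\alpha_1/2)\big)$, being a function of that vector alone, depends only on the preimage matrix.
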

\begin{proof}
    Recall that the principal vector $(\theta_1, 0)$ is unique under a given Schur basis $R$. The identification in~\eqref{eq:so3-preimage-quaternion} is follows trivially for the given $R$. It remains to check that this identification is independent of the Schur basis. By the invariance group of Schur bases characterized in~\cite{mataigne2024eigenvalue}, every other Schur basis of $Q$ is of the form $\hat R = \big[\begin{smallmatrix} R_{[1]}O & \delta R_3 \end{smallmatrix}\big]$ where $O\in\orth_2$ and $\delta\in\{-1,1\}$. Under this change of basis, the corresponding vector of principal angles is $(\hat\theta_1,0)=(\det(O)\theta_1,0)$. Since $\det(\hat{R}) = \det(O)\delta \det(R)$, it holds that $q(\hat{\theta}_1, \hat{R}) = \big[\begin{smallmatrix}
        \cos(\det(O)\theta_1/2)\\ \det(\hat{R})\delta R_3\sin(\det(O)\theta_1/2)
    \end{smallmatrix}\big] = \big[\begin{smallmatrix}
        \cos(\theta_1/2)\\ (\delta\det(O))^2\det(R) R_3\sin(\theta_1/2)
    \end{smallmatrix}\big] = q(\theta_1, R)$. Thus, the quaternion $q(\theta_1, R)$ associated with $X$ is independent of the Schur basis, which concludes the proof.
\end{proof}

\section{Conjugate Complement}\label{sec:conjugate-complement}

Having established the distribution of D-preimage(s) in the map $\exp:\mathcal{C}_e\to\calz_e$ for each D-component $\mathcal{C}_e$, this section investigates the structure of the images $\calz_e$ through the conjugate locus $\frakq\subset\so_n$ and its complement. \Cref{prop:closed-conjugate-locus} first shows that $\frakq$ is a closed subset of measure zero in $\so_n$. The canonical Schur decomposition introduced in \cref{def:canonical-decomp} then reveals a distinguished $2\pi$-shift invariant associated with each D-component except for $\mathcal{C}_*$, as established in~\cref{prop:path-connected}. This invariance identifies the path-connected topology of the conjugate complement, ultimately leading to the characterization of all D-components, as presented in~\cref{prop:canonical-shift-dpreimage} and~\cref{thm:canonical-label}.

\subsection{Characterization of the Conjugate Locus}

In view of~\cref{prop:preimage-scenario}[(iii)], there exists $Q\in\frakq$ with D-preimages, i.e., $\exp^{-1}(Q)\cap(\skewm_n\setminus \fraks)\neq \emptyset$. For example,
\begin{equation*}
    Q =
    \sbmatrix{
        c & -s & 0 & 0\\
        s & c & 0 & 0\\
        0 & 0 & c & -s\\
        0 & 0 & s & c
    },
    X =
        \sbmatrix{
        0 & -t & 0 & 0\\
        t & 0 & 0 & 0\\
        0 & 0 & 0 & -t\\
        0 & 0 & t & 0
        }, \text{ where } t\in(-\pi, \pi),\, \begin{cases}
            c = \cos(t),\\
            s = \sin(t),
        \end{cases}
\end{equation*}
satisfies $X \in \exp^{-1}(Q)\cap(\skewm_n\setminus \fraks)$ but $Q \in \frakq$, as characterized in~\eqref{eq:conjugate-locus}. Moreover, when $t = 0$, then $Q = I_4$ is in $\mathfrak{Q}$. It is therefore necessary to distinguish the conjugate complement in $\so_n$ with the tangent conjugate complement in $\skewm_n$ used in~\cref{def:d-notation}.

\begin{definition}\label{def:strict-d}
    All preimages of $Q\in \so_n\setminus \frakq$ are termed \emph{strict D-preimages} of $Q$. Moreover, a \emph{strict D-curve} $S(t):[0,1]\to\skewm_n\setminus\fraks$ is a D-curve consisting of only strict D-preimages. Consequently, $\exp(S(t))$ of a strict D-curve $S(t)$ does not intersect $\frakq$, i.e., $\exp(S(t)):[0,1]\to\so_n\setminus \frakq$. 
\end{definition}

\begin{proposition}\label{prop:graphic-strict-dconnected}
    A continuous $S(t):[0,1]\to\skewm_n$ with a set of continuous angles $\alpha(t):[0,1]\to\realset^k$ in the form of~\eqref{eq:continuous-skew-curve} is a strict D-curve if and only if all angular trajectories $(u^{\alpha}_{ij}(t) = \alpha_i(t) + \alpha_j(t), v^{\alpha}_{ij}(t) = \alpha_i(t)-\alpha_j(t)), i\neq j\leq k$ are strictly contained\footnote{A point $(u,v)$ is strictly contained in a tile if and only if $u \neq 2\pi l$ and $v \neq 2\pi l$ for all $l\in\mathbb{Z}$}. in a single tile of the $(u,v)$-chessboard.
\end{proposition}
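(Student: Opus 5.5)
We reduce the statement to a pointwise characterization of strict D-preimages and then use continuity. The key claim is: for $X=\sum_i R_{[i]}\sbmatrix{0 & -\alpha_i\\\alpha_i & 0}R_{[i]}^{\T}$, the image $\exp(X)$ lies outside $\frakq$ if and only if $\alpha_i\pm\alpha_j\notin 2\pi\mathbb{Z}$ for all $i\neq j\leq k$. Here $l=0$ is now included, unlike in~\eqref{eq:tangent-conjugate-locus}. Given this, a point of $\skewm_n$ is a strict D-preimage exactly when every pair $(u_{ij},v_{ij})=(\alpha_i+\alpha_j,\alpha_i-\alpha_j)$ avoids all lines $u=2\pi l$ and $v=2\pi l$ with $l\in\mathbb{Z}$, i.e., lies strictly inside some tile of the $(u,v)$-chessboard.

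To prove the pointwise claim, note that $R$ is also a Schur basis of $Q=\exp(X)$, and by~\eqref{eq:shifted-angles-22} the principal angles satisfy $\theta_i=\alpha_i-2\pi x_i$. By~\cref{coro:conjugate-locus}, $Q\in\frakq$ if and only if $|\theta_i|=|\theta_j|$ for some $i\neq j$.
\begin{itemize}
\item If $\theta_i=\pm\theta_j$, then $\alpha_i\mp\alpha_j=2\pi(x_i\mp x_j)\in2\pi\mathbb{Z}$.
\item Conversely, suppose $\alpha_i\pm\alpha_j\in 2\pi\mathbb{Z}$. Then $\theta_i\pm\theta_j\in2\pi\mathbb{Z}$. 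Since $\theta\in\prng^k$, this forces either $\theta_i\pm\theta_j=0$, or $\theta_i=\theta_j=\pi$. In both cases $|\theta_i|=|\theta_j|$.
\end{itemize}
The auxiliary angle $\theta_k=0$ is covered by the same argument.

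The curve statement then follows by continuity. Suppose $S(t)$ is a strict D-curve. Then at every $t$, each trajectory $(u^\alpha_{ij}(t),v^\alpha_{ij}(t))$ is a continuous path in $\realset^2$ that avoids the full grid $\{u\in2\pi\mathbb{Z}\}\cup\{v\in2\pi\mathbb{Z}\}$. The open tiles are exactly the path components of the complement of this grid, so each trajectory stays inside a single tile. Conversely, if every trajectory stays strictly inside one tile, then at each $t$ no grid line is hit. In particular, the lines with $l\neq0$ are avoided, so $S(t)\in\skewm_n\setminus\fraks$ by~\cref{lemma:dconnected-uv}. The lines with $l=0$ are avoided as well, so $\exp(S(t))\notin\frakq$ by the pointwise claim. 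Hence $S(t)$ is a strict D-curve.

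The main subtlety is that the criterion must not depend on the chosen continuous angles $\alpha(t)$. Another continuous choice differs by permutations, sign flips and $2\pi$-shifts. Under such changes the pair $\{\alpha_i\pm\alpha_j\}$ is mapped to another pair of the same form, modulo $2\pi\mathbb{Z}$ and swapping or negating $u$ and $v$. These operations preserve the grid, so the property of avoiding the grid is unaffected.

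A second point is that continuous angles along $S(t)$ exist by the footnote on continuous eigendecomposition, and that the pointwise claim holds for whatever Schur basis $R(t)$ is used at each $t$. The latter is why the characterization is phrased via~\cref{coro:conjugate-locus} rather than via a fixed basis.
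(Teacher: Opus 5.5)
Your proof is correct and follows essentially the same route as the paper: you translate $u^{\alpha}_{ij}(t)$ or $v^{\alpha}_{ij}(t)\in 2\pi\mathbb{Z}$ (including $l=0$) into $|\theta_i(t)|=|\theta_j(t)|$ via~\eqref{eq:shifted-angles-22} and \cref{coro:conjugate-locus}, and then use that a continuous trajectory avoiding the full grid stays in one open tile. The paper's proof writes out only the implication ``strict D-curve $\Rightarrow$ single tile''; your proof of the reverse pointwise implication, and hence of the ``if'' direction, completes it, while your closing remark on independence from the choice of $\alpha(t)$ is unnecessary because the pointwise claim already holds for any Schur basis at each $t$.
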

\begin{proof}
    In view of~\eqref{eq:shifted-angles-22}, if $u^{\alpha}_{ij}(t) = 2\pi l$ or $v^{\alpha}_{ij} = 2\pi l$ holds for any $l\in\mathbb{Z}$ (including $0$), then the corresponding pair of principal angles of $\exp(S(t))$ satisfies $|\theta_i(t)| = |\theta_j(t)|$, i.e., $\exp(S(t))\in\frakq$. Therefore, all angular trajectories of a strict D-curve do not intersect $u = 2\pi l$ nor $v = 2\pi l$ for all $l\in\mathbb{Z}$. Since the trajectories are continuous, each of them is strictly contained in one tile.
\end{proof}

\Cref{prop:closed-conjugate-locus} shows that the conjugate locus $\frakq$ has measure zero in $\so_n$, ensuring that the subsequent analysis on the complement $\so_n \setminus \frakq$ applies almost everywhere.

\begin{proposition}\label{prop:closed-conjugate-locus}
    $\frakq$ is a closed subset in $\so_n$ with zero measure.
\end{proposition}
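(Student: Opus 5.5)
We prove closedness and measure zero separately, both resting on the characterization of $\frakq$ in \cref{coro:conjugate-locus}. The idea is to replace the condition ``two principal angles have equal magnitude'' by a condition on the eigenvalues of $Q$ that is visibly closed and algebraic. If $\theta$ are the principal angles of $Q$ under some Schur basis, then $Q$ has the eigenvalue pairs $e^{\pm \mathrm{i}\theta_i}$ for $i\le m$, plus the eigenvalue $1$ when $n$ is odd. The auxiliary angle $\theta_k=0$ is what makes the odd case fit the same statement. Thus $|\theta_i|=|\theta_j|$ for some $i\neq j\le k$ holds exactly when $\cos\theta_i=\cos\theta_j$ for two distinct blocks. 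So $Q\in\frakq$ if and only if the multiset $\{\cos\theta_i\}_{i\le k}$ has a repeated entry.

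This multiset is the spectrum of the symmetric matrix $\tfrac12(Q+Q^{\T})$. Indeed, that matrix has eigenvalues $\cos\theta_i$ with multiplicity two for $i\le m$, and eigenvalue $1$ with multiplicity one in the odd case. It therefore helps to work on the eigenvalues directly. Consider the characteristic polynomial $p_Q(\lambda)=\det(\lambda I-Q)$, whose roots are $e^{\pm\mathrm{i}\theta_i}$, together with $1$ if $n$ is odd.

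\textbf{Step 1 (closedness).} For even $n$, a coincidence $|\theta_i|=|\theta_j|$ with $i\ne j$ means some root of $p_Q$ has multiplicity at least $2$ beyond the forced conjugate pairing. The exceptions are $\theta_i=0$ or $\pi$, where the pair $e^{\pm\mathrm{i}\theta_i}$ collapses into a double root even without a coincidence. To avoid this case analysis, I would use the function
\[
F(Q)=\prod_{i<j\le k}(\cos\theta_i-\cos\theta_j)^2 .
\]
This is the discriminant of the polynomial whose roots are the $\cos\theta_i$, which is a symmetric polynomial in those roots. Hence $F$ is a polynomial in the coefficients of $q_Q(\mu)=\prod_{i\le k}(\mu-\cos\theta_i)$. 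These coefficients are polynomials in the entries of $Q$. One way to see this is that $\det(\mu I-\tfrac12(Q+Q^{\T}))=q_Q(\mu)^2/(\mu-1)^{[n\text{ odd}]}$, so $q_Q$ is recovered from this determinant. The cleaner route is to note that the elementary symmetric functions of the $\cos\theta_i$ are continuous in $Q$, because eigenvalues depend continuously on the matrix. Then $F$ is continuous on $\so_n$ and $\frakq=F^{-1}(0)$ is closed. The recovery of $q_Q$ as a genuine polynomial is the step that needs the most care; continuity alone already suffices for closedness.

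\textbf{Step 2 (measure zero).} I would show that $F$ is real-analytic on the connected analytic manifold $\so_n$ and not identically zero. The zero set of such a nonzero analytic function has Haar measure zero.

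For analyticity, the cleanest argument is that $F$ is a polynomial in the entries of $Q$. Here $\prod_{i\le k}(\mu-\cos\theta_i)$ equals, up to a factor $\mu-1$ in the odd case, a square root of $\det(\mu I-\tfrac12(Q+Q^{\T}))$. Its coefficients come from the coefficients of $p_Q$ via the substitution $\lambda+\lambda^{-1}=2\mu$, because $p_Q$ is palindromic. That substitution is polynomial, so $F$ is a polynomial in the entries of $Q$.

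For nonvanishing, the block-diagonal $Q=\exp(\Theta)$ with distinct angles $\theta_i=i\pi/(k+1)$ (and $\theta_k=0$ when $n$ is odd) gives $F\neq0$.

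The main obstacle is justifying the polynomial or analytic dependence through the palindromic substitution. As a fallback, I would avoid it entirely by a dimension count. The map $(R,\theta)\mapsto R\exp(\Theta)R^{\T}$, restricted to $\{|\theta_i|=|\theta_j|\}$, is smooth from a manifold whose image has dimension strictly less than $\dim\so_n$. The stabilizer of such a $Q$ contains an extra $\orth_2$-type factor, so the orbit times the parameter set loses dimension. Smooth images of lower-dimensional manifolds have measure zero by Sard's theorem.
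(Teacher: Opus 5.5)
Your proof is correct, but it is organized differently from the paper's. The paper uses two different functions. Closedness comes from continuity of $\mathrm{gap}(Q)=\min_{i\neq j}\bigl||\theta_i|-|\theta_j|\bigr|$, argued via $|\theta_i(t)|=\arccos c_i(t)$ along a continuous Schur decomposition of a curve. Measure zero comes from the discriminant $\Delta$ of $p_Q$. That function is obviously polynomial in the entries of $Q$, but its zero set is strictly larger than $\frakq$: a single angle equal to $\pi$ already makes it vanish. The paper then simply asserts that $\{\Delta=0\}$ has lower dimension, without checking that $\Delta$ is not identically zero on $\so_n$.

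You instead use one function, the discriminant $F$ of $\prod_{i\le k}(\mu-\cos\theta_i)$, which cuts out $\frakq$ exactly because $\cos$ is injective on $[0,\pi]$. This gives both claims at once and shows that $\frakq$ is itself a real-algebraic subset of $\so_n$. It also makes explicit the two facts the paper leaves implicit: an explicit point where the function is nonzero, and that a nonzero real-analytic function on the connected manifold $\so_n$ has a null zero set.

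The price is that polynomiality of $F$ is less obvious than that of $\Delta$. However, the step you flag as the main obstacle is routine. The power sums $\sum_{i\le k}\cos^p\theta_i$ equal $\tfrac12\operatorname{tr}\bigl((\tfrac12(Q+Q^{\T}))^p\bigr)$ for even $n$, and $\tfrac12\bigl(\operatorname{tr}\bigl((\tfrac12(Q+Q^{\T}))^p\bigr)+1\bigr)$ for odd $n$. Newton's identities then make the coefficients of $q_Q$, and hence $F$, polynomials in the entries of $Q$, with no palindromic substitution needed.

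So the Sard fallback is unnecessary. As sketched, it would also need more care, since $\orth_n\times\{|\theta_i|=|\theta_j|\}$ has dimension no smaller than $\dim\so_n$. You would first have to prove that the differential is everywhere rank-deficient, or pass to the quotient by the stabilizer, before invoking Sard.
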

\begin{proof}
    In view of the continuity of eigendecomposition (see, e.g.,~\cite[Theorem 7.2.2]{golub2013matrix}), any continuous curve of special orthogonal matrices admits a continuous choice of Schur bases in which the diagonal blocks are also continuous, i.e., 
    \begin{equation*}
        Q(t) = \textstyle\sum_{i=1}^k R(t)_{[i]}E_{[i,i]}(t) R(t)_{[i]}^{\T}
    \end{equation*}
    where $R(t)\in\orth_n$ and $E_{[i,i]}(t) = \sbmatrix{c_i(t) & -s_i(t)\\ s_i(t) & c_i(t)}\in\so_2, i\leq m$ ($E_{[k,k]}(t) \equiv 1$ when $n = 2k-1$) are continuous. Moreover, the magnitude of the $i$-th non-auxiliary principal angles under $R(t)$ is given by $|\theta_i(t)| = \arccos(c_i(t))$, which is continuous. Note that the principal angle itself may be discontinuous when it jumps around $-\pi$ and $\pi$ but the magnitude remains continuous. Denote the minimal gap between principal angles of $Q\in\so_n$ by
    \begin{equation*}
        \mathrm{gap}(Q) := \argmin_{i\neq j\leq k}||\theta_i| - |\theta_j||,
    \end{equation*}
    then $\mathrm{gap}(Q(t))$ is continuous. Moreover, the conjugate locus $\frakq$ in~\eqref{eq:conjugate-locus} is equivalent to $\frakq = \{Q\in\so_n:\mathrm{gap}(Q) = 0\}$, which is then closed in $\so_n$.

    The measure of $\frakq$ follows from the dimensionality argument. Consider the characteristic polynomial $p_Q$ of $Q$ defined as
    \begin{equation*}
        p_Q(\lambda)=\mathrm{det}(\lambda I_n - Q) = \lambda^n + c_{n-1}(A)\lambda^{n-1} + \cdots + c_0(A),\, \text{for $\lambda\in \mathbb{C}$}
    \end{equation*}
    where the eigenvalues of $Q$ are the roots of $p_Q$.  Moreover, consider the discriminant $\Delta_Q$ of $Q$ with the eigenvalues $\lambda_i,i= 1,\ldots,n$ defined as
    \begin{equation*}
        \Delta(Q) = \textstyle \prod_{1\leq i < j \leq n}(\lambda_i-\lambda_j)^2.
    \end{equation*}
    In view of~\eqref{eq:conjugate-locus}, if $Q\in\frakq$, then $Q$ has repeated eigenvalues and $\Delta(Q) = 0$, i.e., $\frakq\subset \{Q\in\so_n:\Delta(Q) = 0\}$. We proceed to show that $\{Q\in\so_n:\Delta(Q) = 0\}$ has a dimension smaller than $\so_n$ by the fact that $\Delta(Q)$ is a polynomial of entries in $Q$. Since the discriminant is invariant under any permutation of the eigenvalues ${\lambda_i}$, it is a symmetric polynomial $\Delta(Q) = \Delta(\lambda_1,\ldots, \lambda_n)$ in the roots of $p_Q$. By the fundamental theorem of symmetric polynomials (see, e.g., \cite[Ch. VI, §2]{lang2012algebra}), the discriminant $\Delta(\lambda_1,\ldots, \lambda_n)$ of $p_Q(\lambda)$ is a polynomial in the coefficients $c_i(Q)$ of $p_Q(\lambda)$. Moreover, since the coefficients of $p_Q$ are polynomial functions of the matrix entries (see, e.g., \cite[§1.2.1]{horn2012matrix}), the discriminant is a polynomial function of the entries of $Q$. Consequently, the dimension of $\{Q\in\so_n:\Delta(Q) = 0\}$ is strictly less than the dimension of $\so_n$. Therefore, $\frakq\subset \{Q\in\so_n:\Delta(p_Q) = 0\}$ has zero measure in $\so_n$.
\end{proof}

\subsection{Canonical Schur Decomposition}

The angular relationship between principal angles of $Q$ and the angles of its D-preimages revealed in~\eqref{eq:dpreimage-shifted-angle} suggests that every D-preimage can be identified by the shifts of some multiples of $2\pi$ to the the principal angles of $Q$. However, the choices of Schur basis allow the principal angles to be arbitrarily permuted and sign-flipped. This subsection introduces a canonical alignment rule in~\cref{def:canonical-decomp} that generates a unique vector of principal angles in any $Q$.

\begin{definition}\label{def:canonical-decomp}
    A Schur decomposition $Q = R\exp(\Theta)R^{\T}\in\so_n$ and the associated principal angles $\theta\in \prng^k$ are termed \emph{canonical} if (i) $R\in\so_n$, and (ii):
    \begin{equation}\label{eq:canonical-angles}
        \begin{cases}
            \pi\geq \theta_1 \geq \cdots \geq \theta_{k-1}\geq |\theta_k| \geq 0, &\text{ if } n = 2k,\\ 
            \pi\geq\theta_1 \geq \cdots \geq \theta_{k-1}\geq |\theta_k| = 0, &\text{ if } n = 2k-1.\\ 
        \end{cases}
    \end{equation}
    Moreover, a Schur decomposition $X =\outerprodskew{R}{\alpha}\in\skewm_n$, the associated angles $\alpha = \theta + 2\pi\xi\in\realset^k$ and angular shift $\xi\in\mathbb{Z}^k$ are termed \emph{canonical} if (i) $R\in\so_n$, and (ii) $\theta\in\prng^k$ satisfies~\eqref{eq:canonical-angles}.
\end{definition}

\begin{remark}
    It is also possible to remove the absolute values for $\theta_k$ when $n = 2k$, but then the condition on $R$ has to be relaxed to $\orth_n$, and this is not suitable for results like~\cref{prop:path-connected} because $\orth_n$ has two connected components.
\end{remark}

Such a canonical Schur decomposition is essential in understanding $\so_n$, as it identifies a unique set of principal angles of any $Q\in\so_n$ as constructed in~\cref{prop:existence-canonical-decomp}, regardless of repeated eigenvalues or principal angles.

\begin{proposition}\label{prop:existence-canonical-decomp}
    Let $Q =R\exp(\Theta)R^{\T}\in\so_n$ be a Schur decomposition with the principal angles $\theta \in \prng^k$ (the auxiliary $\theta_k=0$ when $n = 2k-1$). A canonical Schur decomposition of $Q=V\exp(\Sigma) V^{\T}$ with $V\in\so_n$ and $\sigma$ subject to~\eqref{eq:canonical-angles} are obtained using the following procedure:
    \begin{description}
        \item[Initial.]
        Set $V=R$ and $\sigma=\theta$.
        If $\det(R)=-1$, swap the two columns in the first block
        $V_{[1]}=\sbmatrix{V_1 & V_2}$ and replace $\sigma_1$ by $-\sigma_1$, such that $\det(V) = 1$.

        \item[Shuffle.]
        Let $\rho:\{1,\ldots,k\}\to\{1,\ldots,k\}$ (with $\rho(k)=k$ when
        $n=2k-1$) be a permutation such that
        $|\sigma_{\rho(i)}|\ge |\sigma_{\rho(i+1)}|$ for $i=1,\ldots,k-1$.
        Reorder the angles and Schur blocks accordingly:
        \[
        \sigma\leftarrow(\sigma_{\rho(1)},\ldots,\sigma_{\rho(k)}),\qquad
        V\leftarrow\bigl[\,V_{[\rho(1)]}\ \cdots\ V_{[\rho(k)]}\,\bigr].
        \]

        \item[Flip.]
        For $i=1,\ldots,m-1$, sequentially check whether $\sigma_i<0$.
        If so, swap the two columns within each of the adjacent blocks
        $V_{[i]}$ and $V_{[i+1]}$, and replace
        $(\sigma_i,\sigma_{i+1})$ by $(-\sigma_i,-\sigma_{i+1})$ so that
        $-\sigma_i>0$. The value $\sigma_{i+1}$ is processed at the
        next step if necessary.

        \item[Extra Flip.]
        If $n=2m+1$ and $\sigma_m<0$, swap the two columns in $V_{[m]}$,
        replace $\sigma_m$ by $-\sigma_m>0$, and replace
        $V_n$ by $-V_n$. If $n=2m$ and $\sigma_m = -\pi$, replace $\sigma_m$ by $-\sigma_m=\pi$.
    \end{description}
\end{proposition}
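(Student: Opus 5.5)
The plan is to check that every step of the procedure preserves two invariants: the identity $Q=V\exp(\Sigma)V^{\T}$, and the condition $V\in\orth_n$ with $\det(V)=1$ from the end of \textbf{Initial} onward. Then I verify that at termination $\sigma$ satisfies~\eqref{eq:canonical-angles}. Everything rests on three elementary facts.

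\begin{enumerate}[(a)]
    \item By~\cref{remark:angles_symmetry}, swapping the two columns of a block $V_{[i]}$ and replacing $\sigma_i$ by $-\sigma_i$ leaves the term $V_{[i]}\exp\big(\sbmatrix{0 & -\sigma_i\\ \sigma_i & 0}\big)V_{[i]}^{\T}$ unchanged. This works because swapping the columns transposes the $2\times 2$ rotation block. The swap multiplies $\det(V)$ by $-1$.
    \item Permuting whole $2\times 2$ blocks (and fixing the leftover column when $n$ is odd) preserves the outer-product sum in~\cref{def:angles-and-shifts}. It is an even permutation of columns, since each block transposition is a product of two column transpositions, so $\det(V)$ is unchanged.
    \item Replacing $V_n$ by $-V_n$ preserves $V_nV_n^{\T}$ and flips the sign of $\det(V)$. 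Moreover, $\exp$ of an angle block takes the same value at $-\pi$ and at $\pi$.
\end{enumerate}

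Next I would run through the steps in order. \textbf{Initial} applies (a) once exactly when $\det(R)=-1$, so afterwards $\det(V)=1$. \textbf{Shuffle} applies (b), so $\det(V)=1$ is kept and the magnitudes become nonincreasing. When $n$ is odd, $\rho(k)=k$ keeps the auxiliary $\sigma_k=0$ last, which is consistent since $0$ is the smallest magnitude. Each \textbf{Flip} step applies (a) twice, so the determinant is preserved and all magnitudes are unchanged. By induction on $i$, after step $i$ we have $\sigma_1,\dots,\sigma_i\ge 0$, and only $\sigma_{i+1}$ may have changed sign. Hence after the loop, $\sigma_1\ge\cdots\ge\sigma_{m-1}\ge|\sigma_m|$. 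In the \textbf{Extra Flip} with $n=2m+1$, combining (a) with (c) gives $\det(V)=1$ and $\sigma_m\ge 0$. Together with $\sigma_k=0$ this yields the odd case of~\eqref{eq:canonical-angles}. When $n=2m$ we have $k=m$, and~\eqref{eq:canonical-angles} only asks for $\sigma_{k-1}\ge|\sigma_k|$, which already holds.

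The step I expect to be the main obstacle is the bookkeeping of the value $-\pi$. A sign flip in \textbf{Initial} or \textbf{Flip} can turn a principal angle $\pi$ into $-\pi$, which lies outside $\prng$. For indices $i\le m-1$ this is harmless, because the flip loop later turns any negative $\sigma_i$, including $-\pi$, into a positive one. So the only place where $-\pi$ can survive is $\sigma_m$. When $n$ is odd, the extra flip maps it to $\pi$. When $n$ is even, the angle is simply relabelled to $\pi$, and by (c) this leaves $\exp(\Sigma)$ and $V$ unchanged. I would also record that $\sigma_1\le\pi$ is automatic, since all magnitudes are at most $\pi$. Finally, I would confirm that $\sigma\in\prng^k$ at the end, so that $\sigma$ is indeed the vector of principal angles of $Q$ under $V$ in the sense of~\cref{def:principal-angles}.
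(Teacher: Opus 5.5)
Your proposal is correct and takes essentially the same approach as the paper's proof. Both check two things: that each operation preserves the outer-product terms (a column swap within a block paired with negating its angle, block reordering, $V_n\mapsto -V_n$, and identifying the angles $-\pi$ and $\pi$), and that $\det(V)$ is flipped an even number of times after \textbf{Initial}. Your induction over the \textbf{Flip} loop, and your bookkeeping showing that $-\pi$ can survive only in $\sigma_m$, make explicit what the paper dismisses as ``by construction.''
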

\begin{proof}
    Notice that interchanging columns within $V_{[i]}, i\leq m$ satisfies
    \begin{equation*}
        \begin{aligned}
            V_{[i]}\exp\left(\Sigma_{[i,i]}\right)V_{[i]}^{\T} &= \sbmatrix{V_{2i-1} & V_{2i}} \sbmatrix{\cos(\sigma_i) & -\sin(\sigma_i)\\\sin(\sigma_i) & \cos(\sigma_i)}\sbmatrix{V_{2i-1}^{\T} \\ V_{2i}^{\T}}\\
            &= \sbmatrix{V_{2i} & V_{2i-1}} \sbmatrix{\cos(-\sigma_i) & -\sin(-\sigma_i)\\\sin(-\sigma_i) & \cos(-\sigma_i)}\sbmatrix{V_{2i}^{\T} \\ V_{2i-1}^{\T}}
        \end{aligned}
    \end{equation*}
    and $V_{[k]}\exp(\Sigma_{[k,k]})V_{[k]}^{\T} = V_nV_n^{\T} = (-V_n)(-V_n)^{\T}$ holds when $n = 2k-1$. When $n = 2k$ and $\sigma_m = -\pi$, $V_{[m]}\exp\left(\sbmatrix{0 & \pi\\-\pi & 0}\right)V_{[k]}^{\T} = V_{[m]}\exp\left(\sbmatrix{0 & -\pi\\\pi & 0}\right)V_{[k]}^{\T}$. In conclusion, the outer product terms $R_{[i]}\exp(\Theta_{[i,i]})R_{[i]}^{\T}$ in $Q= R\exp(\Theta)R^{\T}$ are preserved in $V_{[i]}\exp(\Sigma_{[i,i]})V_{[i]}^{\T}$ for $i\leq k$ in the procedure. Meanwhile, the (Shuffle) stage only shuffles the summation as $\sum_{i=1}^k V_{[\rho(i)]} \exp(\Sigma_{[\rho(i),\rho(i)]}) V_{[\rho(i)]}^{\T}$. Then, $Q = R\exp(\Theta)R^{\T} = V\exp(\Sigma)V^{\T}$.

    By construction, the inequalities~\eqref{eq:canonical-angles} are satisfied in $\sigma\in\prng^k$. It remains to show that $V\in\so_n$. Notice that interchanging any pair of columns or flipping any single column in $V$ flips the determinant of $V$. Therefore, $\det(V)$ is initialized with $\det(V) = 1$, and then it is flipped even times in the (Shuffle), (Flip) and the optional (Extra Flip) stages, such that the resulting $\det(V) = 1$ holds, i.e., $V$ on output is in $\so_n$.
\end{proof}

Since any $Q\in\so_n$ admits a canonical Schur decomposition, the conjugate locus can be characterized by the canonical principal angles~\eqref{eq:canonical-angles} without the coincident angles.

\begin{proposition}\label{prop:non-conjugte-canonical-decomp}
    Let $\theta \in \prng^k$ be the canonical principal angles of $Q\in\so_n$, i.e., $\theta$ satisfies~\eqref{eq:canonical-angles}. Then, $Q\notin \frakq$ if and only if $\theta$ satisfies
    \begin{equation}\label{eq:non-conjugate-principal-angles}
        \begin{cases}
            \pi\geq \theta_1 > \cdots > \theta_{k-1}> |\theta_k| \geq 0, &\text{ if } n = 2k,\\ 
            \pi\geq \theta_1 > \cdots > \theta_{k-1}> |\theta_k| = 0, &\text{ if } n = 2k-1.\\ 
        \end{cases}
    \end{equation}
\end{proposition}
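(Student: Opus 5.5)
The argument is a direct translation of \cref{coro:conjugate-locus} into the ordered setting of \eqref{eq:canonical-angles}. The key preliminary observation is that the conjugate-locus criterion in \eqref{eq:conjugate-locus} does not depend on which Schur basis is used. By \cref{remark:angles_symmetry}, any two vectors of principal angles of $Q$ differ only by a permutation and sign flips of non-$\pi$ entries. Such operations preserve the multiset $\{|\theta_i|\}_{i\le k}$, so the existence of $i\neq j$ with $|\theta_i|=|\theta_j|$ holds for all vectors of principal angles of $Q$ or for none. We may therefore test membership in $\frakq$ using the canonical vector $\theta$ supplied by \cref{prop:existence-canonical-decomp}. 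In that application of \cref{coro:conjugate-locus}, the $1\times1$ auxiliary angle $\theta_k=0$ is included when $n=2k-1$.

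Next, rewrite the canonical ordering in terms of magnitudes. Condition \eqref{eq:canonical-angles} forces $\theta_1,\ldots,\theta_{k-1}\ge 0$, so $|\theta_i|=\theta_i$ for $i\le k-1$. Hence the magnitudes are weakly decreasing:
\[
|\theta_1|\ge |\theta_2|\ge\cdots\ge|\theta_{k-1}|\ge|\theta_k|.
\]
For a weakly decreasing finite sequence, two entries coincide if and only if two adjacent entries coincide. Indeed, if $|\theta_i|=|\theta_j|$ with $i<j$, then monotonicity squeezes every intermediate entry, so in particular $|\theta_i|=|\theta_{i+1}|$. It follows that $Q\notin\frakq$ if and only if every adjacent inequality in the chain is strict, namely $\theta_1>\cdots>\theta_{k-1}>|\theta_k|$. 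Combined with $\pi\ge\theta_1$ and the parity-dependent condition on $\theta_k$ (namely $|\theta_k|\ge0$ for $n=2k$ and $\theta_k=0$ for $n=2k-1$), this is exactly \eqref{eq:non-conjugate-principal-angles}.

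Both directions come out of this equivalence. If \eqref{eq:non-conjugate-principal-angles} holds, all magnitudes are distinct, and \cref{coro:conjugate-locus} gives $Q\notin\frakq$. Conversely, if some adjacent inequality is an equality, we have produced $i\neq j$ with $|\theta_i|=|\theta_j|$, and so $Q\in\frakq$.

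The only genuinely delicate point is the basis independence in the first step. \Cref{coro:conjugate-locus} is phrased with "a Schur basis $R$", so one must confirm that it does not secretly require a particular basis. I would handle this by citing \cref{remark:angles_symmetry} together with the invariance-group description of Schur bases from~\cite{mataigne2024eigenvalue}. Alternatively, I would note that the corollary's proof holds for an arbitrary Schur basis, because \cref{prop:tangent-conjugate-locus} holds for any basis. Everything else is an elementary ordering argument.
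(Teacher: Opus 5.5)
Your proposal is correct and takes the same route as the paper, which applies \cref{coro:conjugate-locus} to the canonical angles and reads off the strict ordering; you also make explicit two points the paper leaves implicit, namely the basis-independence of the multiset $\{|\theta_i|\}$ and the reduction to adjacent coincidences. One caution: rely on the invariance of the magnitudes (e.g., they are determined by the eigenvalues $e^{\pm\mathrm{i}\theta_j}$ of $Q$) rather than on your fallback, because the only-if half of the proof of \cref{coro:conjugate-locus} uses a Schur basis of a preimage $X\in\fraks$, which an arbitrary Schur basis of $Q$ need not be (cf.\ the $4\times 4$ example after \cref{coro:dpreimage-shifted-angle}), so \cref{prop:tangent-conjugate-locus} alone does not cover that case.
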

\begin{proof}
    $Q\notin \frakq$ if and only if $|\theta_i|\neq |\theta_j|$ holds for all $i\neq j\leq k$, see~\eqref{eq:conjugate-locus}. The canonical principal angles $\theta$ subject to~\eqref{eq:canonical-angles} with distinct magnitudes yields~\eqref{eq:non-conjugate-principal-angles}.
\end{proof}

\subsection{Canonical Shift Label of D-components}

The canonical Schur decomposition ensures that every $Q\in\so_n$ admits a unique vector of canonical principal angles, as established in \cref{prop:existence-canonical-decomp}. This canonical form exposes the unique associated integer shifts $\xi\in\mathbb{Z}^k$ of $2\pi$ of the angles in its D-preimages, which provide the key invariants underlying the D-component structure.

Recall that a continuous $S(t):[0,1]\to \skewm_n$ admits the form of~\eqref{eq:continuous-skew-curve}:
\begin{equation*}
    S(t) = \textstyle\sum_{i=1}^m R(t)_{[i]}\big[\begin{smallmatrix}0 & -\theta_i(t)\\\theta_i(t) & 0\end{smallmatrix}\big] R(t)_{[i]}^{\T}
\end{equation*}
where $\theta(t) = (\theta_1(t),\ldots, \theta_k(t))$ is continuous in $\realset^k$ and $R(t)$ is continuous in $\orth_n$. \Cref{prop:path-connected} shows that for all pairs of endpoints $P$ and $Q$ in $\so_n\setminus \frakq$ and all angular shifts $\xi\in\mathbb{Z}^k$, one can construct a strict D-curve $S(t)$ in the form of~\eqref{eq:strict-dcurve} so that $\exp(S(t))$ connects $P$ and $Q$.

\begin{proposition}\label{prop:path-connected}
    For $P$ and $Q$ in $\so_n\setminus\frakq$ and an angular shift $\xi=(x_1,\ldots, x_k)\in\realset^k$ (the auxiliary $x_k = 0$ when $n = 2k-1$), let $P = V\exp(\Sigma)V^{\T}$ and $Q = W\exp(\Theta) W^{\T}$ be canonical Schur decompositions, see~\cref{def:canonical-decomp}, such that $V, W\in\so_n$ and the respective canonical principal angles $\sigma$ and $\theta$ are subject to~\eqref{eq:non-conjugate-principal-angles}. Consider a continuous curve in $\skewm_n$:
    \begin{equation}\label{eq:strict-dcurve}
        S_{R,\xi}(t) := \textstyle\sum_{i=1}^m R(t)_{[i]}\big[\begin{smallmatrix}0 & -(1-t)\,\sigma_i - t\,\theta_i - 2\pi x_i\\(1-t)\,\sigma_i + t\,\theta_i + 2\pi x_i & 0\end{smallmatrix}\big] R(t)_{[i]}^{\T},
    \end{equation}
    where $R(t)$ is continuous in $\so_n$ and it connects $R(0) = V$ and $R(1) = W$. Then $S_{\xi, R}(t):[0,1]\to\skewm_n$ is a strict D-curve, and its exponential connects $\exp(S_{R,\xi}(0)) = P$ and $\exp(S_{R,\xi}(1))=Q$.
\end{proposition}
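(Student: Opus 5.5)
The plan has three parts: check the endpoints, describe the angles along the curve, and verify the strict D-curve condition through \cref{prop:graphic-strict-dconnected}.

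\textbf{Endpoints and path.} Since $\so_n$ is path-connected, a continuous $R(t)\in\so_n$ joining $V$ to $W$ exists; this is where the canonical requirement $V,W\in\so_n$ matters. At $t=0$ the curve has angles $\sigma_i+2\pi x_i$ under $V$. Each block exponential equals $\exp\big(\sbmatrix{0 & -\sigma_i\\ \sigma_i & 0}\big)$ because the shift is a multiple of $2\pi$, so $\exp(S_{R,\xi}(0))=V\exp(\Sigma)V^{\T}=P$. In the same way $\exp(S_{R,\xi}(1))=Q$. For odd $n$ the auxiliary block is $0$ throughout.

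\textbf{Angles along the curve.} The angle vector is $\alpha(t)=(1-t)\sigma+t\theta+2\pi\xi$, which is continuous, with $R(t)$ continuous, so \eqref{eq:continuous-skew-curve} holds. By \cref{prop:graphic-strict-dconnected} it suffices to show that each angular trajectory $u_{ij}(t)=\alpha_i+\alpha_j$, $v_{ij}(t)=\alpha_i-\alpha_j$ avoids every line $u=2\pi l$ and $v=2\pi l$ with $l\in\mathbb{Z}$. Write $\gamma(t)=(1-t)\sigma+t\theta$. Then $u_{ij}=\gamma_i+\gamma_j+2\pi(x_i+x_j)$ and $v_{ij}=\gamma_i-\gamma_j+2\pi(x_i-x_j)$. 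The integer shifts do not affect whether a value is a multiple of $2\pi$. So the task reduces to showing that $\gamma_i(t)\pm\gamma_j(t)\notin 2\pi\mathbb{Z}$ for all $t\in[0,1]$ and all $i<j\le k$.

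\textbf{Key step: the convex combination stays strictly ordered.} Both $\sigma$ and $\theta$ satisfy \eqref{eq:non-conjugate-principal-angles}, and that set is convex. For $i<j\le k-1$ the strict inequality $\gamma_i>\gamma_j$ is preserved under convex combination. For the last index, the condition $\gamma_{k-1}>|\gamma_k|$ is also preserved, because $|\gamma_k|\le(1-t)|\sigma_k|+t|\theta_k|<(1-t)\sigma_{k-1}+t\theta_{k-1}=\gamma_{k-1}$. For odd $n$ we simply have $\gamma_k=0$. Hence $\pi\ge\gamma_1>\cdots>\gamma_{k-1}>|\gamma_k|\ge0$.
\begin{itemize}
\item For $i<j$, the difference satisfies $0<\gamma_i-\gamma_j\le\gamma_i+|\gamma_j|<2\gamma_i\le2\pi$, so it is not a multiple of $2\pi$.
\item The sum satisfies $0<\gamma_i-|\gamma_j|\le\gamma_i+\gamma_j<2\pi$. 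The left inequality uses $\gamma_i>|\gamma_j|$, and the right one uses $\gamma_j<\gamma_i\le\pi$ when $j<k$, or $|\gamma_k|<\gamma_i\le\pi$ when $j=k$. So the sum is not a multiple of $2\pi$ either.
\end{itemize}
This gives the strict D-curve property, and in particular each $S_{R,\xi}(t)\notin\fraks$.

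The main obstacle is the last-index bookkeeping: $\theta_k$ may be negative for even $n$, and the auxiliary angle is fixed for odd $n$. This is handled by the absolute-value inequality above. A minor point is that the statement writes $\xi\in\realset^k$ where $\mathbb{Z}^k$ is intended.
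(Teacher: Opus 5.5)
Your proof is correct and follows the paper's argument: the interpolated angles $(1-t)\sigma+t\theta$ keep the strict ordering of \eqref{eq:non-conjugate-principal-angles} by convexity, so no two of them can have equal magnitude (equivalently, no sum or difference lies in $2\pi\mathbb{Z}$). The differences are cosmetic. You conclude via the chessboard criterion of \cref{prop:graphic-strict-dconnected}, whereas the paper uses \cref{coro:conjugate-locus}. You also spell out the last-index case $|\gamma_k|<\gamma_{k-1}$, which the paper leaves implicit, and you rightly note that $\xi$ should lie in $\mathbb{Z}^k$.
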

\begin{proof}
    The equalities $\exp(S_{R,\xi}(0)) = P$ and $\exp(S_{R,\xi}(1))=Q$ follows by construction. It suffices to show that $\exp(S(t))\notin \frakq$ for all $t\in[0,1]$. Notice that the principal angles of $\exp(S(t))$ are given by the linear interpolant $(1-t)\,\sigma + t\,\theta\in\prng^k$ for $t\in[0,1]$. Moreover, the inequalities~\eqref{eq:non-conjugate-principal-angles} that hold for $\sigma$ and $\theta$ also hold for $(1-t)\,\sigma + t\,\theta$ for all $t\in[0,1]$, which implies that $|(1-t)\,\sigma_i+t\,\theta_i|\neq |(1-t)\,\sigma_j+t\,\theta_j|$ for all $t\in[0,1]$. In view of~\eqref{eq:conjugate-locus}, $\exp(S(t))\notin\frakq$ follows for $t\in[0,1]$.
\end{proof}

Observe that the canonical shift $\xi$ along the strict D-curve $S_{\xi, R}(t)$ remains constant and the points on $S_{\xi, R}(t)$ are naturally D-connected. It suggests that the canonical shift is invariant within a D-component. To see that, we construct the set $\mathcal{C}_{\xi}$ that collects all skew-symmetric matrices in $\skewm_n\setminus\fraks$ with the same canonical shift $\xi$ in~\cref{prop:canonical-shift-dpreimage}.

\begin{proposition}\label{prop:canonical-shift-dpreimage}
    Consider a canonical shift $\xi=(x_1,\ldots, x_k)\in\mathbb{Z}^k$ (the auxiliary $x_k=0$ when $n=2k-1$), and let $\mathcal{C}_{\xi}$ collect all skew-symmetric matrices in $\skewm_n\setminus\fraks$ with their canonical shifts being $\xi$. Then, $\mathcal{C}_{\xi}$ is given by
    \begin{equation}\label{eq:canonical-shift-dpreimage}
        \mathcal{C}_{\xi} := \left\{\outerprodskew{R}{\alpha}:\begin{aligned}
            &R\in\so_n, \alpha = \theta + 2\pi\xi, \theta\text{ s.t.~\eqref{eq:canonical-angles} and } |\theta_2| < \pi\\
            &\text{If $x_i\neq \pm x_{i+1}$, then $\theta_i\neq \pm\theta_{i+1}$ for $i\leq k-1$}
        \end{aligned}\right\}.
    \end{equation}
\end{proposition}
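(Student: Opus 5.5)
The plan is to prove the two inclusions separately, working throughout with canonical Schur decompositions. The tools are \cref{prop:shared-basis}, \cref{coro:dpreimage-shifted-angle} and the rank-deficiency criterion of \cref{prop:tangent-conjugate-locus}. Since coincident canonical principal angles allow several canonical Schur bases, a matrix may carry more than one canonical shift. I read ``its canonical shifts being $\xi$'' as: $\xi$ is realized by \emph{some} canonical decomposition of the matrix.

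\textbf{Inclusion ``$\subseteq$''.} Take $X\in\skewm_n\setminus\fraks$ whose canonical decomposition is $X=\sum_i R_{[i]}\big[\begin{smallmatrix}0&-\alpha_i\\ \alpha_i&0\end{smallmatrix}\big]R_{[i]}^{\T}$, with $R\in\so_n$, $\alpha=\theta+2\pi\xi$ and $\theta$ satisfying~\eqref{eq:canonical-angles}. Such a form exists by \cref{prop:existence-canonical-decomp} applied to $Q=\exp(X)$ together with \cref{prop:shared-basis}, since $X$ is a D-preimage. It remains to verify the two side conditions.
\begin{enumerate}[(a)]
\item If $|\theta_2|=\pi$, then canonicity forces $\theta_1=\theta_2=\pi$. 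This is pattern (a) of \cref{fig:scatter-pattern}, and \cref{prop:preimage-scenario}(i) then gives $X\in\fraks$, a contradiction. Hence $|\theta_2|<\pi$.
\item Suppose $x_i\neq\pm x_{i+1}$ but $\theta_i=\pm\theta_{i+1}$. Then $\alpha_i\mp\alpha_{i+1}=2\pi(x_i\mp x_{i+1})$ is a nonzero multiple of $2\pi$. By \eqref{eq:tangent-conjugate-locus} this gives $X\in\fraks$, again a contradiction.
\end{enumerate}

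\textbf{Inclusion ``$\supseteq$''.} Take $X$ written as in~\eqref{eq:canonical-shift-dpreimage}. By construction its canonical shift is $\xi$, so the only thing to prove is $X\notin\fraks$. By \cref{prop:tangent-conjugate-locus} this means checking that $\alpha_i\pm\alpha_j=\theta_i\pm\theta_j+2\pi(x_i\pm x_j)\notin 2\pi(\mathbb{Z}\setminus\{0\})$ for every pair $i\neq j$. Because $\theta\in\prng^k$, the quantity $\theta_i\pm\theta_j$ lies in $(-2\pi,2\pi]$, so it can be a multiple of $2\pi$ only in three cases.
\begin{enumerate}[(a)]
\item $\theta_i=\pm\theta_j$, giving the value $0$.
\item The value $2\pi$, which requires $\theta_i=\theta_j=\pi$. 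This is excluded by $|\theta_2|<\pi$.
\item Otherwise $\theta_i\pm\theta_j\notin 2\pi\mathbb{Z}$, and the pair is automatically fine.
\end{enumerate}
So only coincident magnitudes $|\theta_i|=|\theta_j|$ need attention. By the monotone ordering~\eqref{eq:canonical-angles}, coincidences occur in consecutive runs. Within a run, the hypothesis ``$x_i\neq\pm x_{i+1}\Rightarrow\theta_i\neq\pm\theta_{i+1}$'' only forces $x_i=\pm x_{i+1}$ for adjacent indices.

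\textbf{Main obstacle.} The difficulty is precisely this coincident case. When $\theta_i=\theta_{i+1}\neq0$ and $x_i=-x_{i+1}\neq0$, the difference $\alpha_i-\alpha_{i+1}=4\pi x_i$ signals rank deficiency in this particular basis. I therefore expect the argument to need the freedom in the Schur basis of $\exp(X)$: within a block of equal angles, the invariance group of~\cite{mataigne2024eigenvalue} allows swapping or jointly flipping the corresponding blocks while keeping $R\in\so_n$.

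My first attempt would be to show that, after such a rearrangement, one can always choose a canonical representative whose shifts are constant on each coincidence run (or vanish when the common angle is $0$). With that representative, \eqref{eq:tangent-conjugate-locus} can be checked directly. I would then argue that the set-builder description in~\eqref{eq:canonical-shift-dpreimage} is understood modulo this reselection of representatives.

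If the reselection argument fails, the fallback is to show that such $X$ nevertheless admit an alternative expression, using a different canonical basis, that lies in $\skewm_n\setminus\fraks$. Failing that, the condition in~\eqref{eq:canonical-shift-dpreimage} would have to be read as requiring $x_i=x_{i+1}$ whenever $\theta_i=\theta_{i+1}$.
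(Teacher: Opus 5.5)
\textbf{Gap: the inclusion ``$\supseteq$'' is not proved.} Your ``$\subseteq$'' direction is correct. So is the reduction at the start of ``$\supseteq$'': a multiple of $2\pi$ in $\theta_i\pm\theta_j$ is either $0$, from $\theta_i=\pm\theta_j$, or $2\pi$, from $\theta_i=\theta_j=\pi$. This matches the paper, which excludes exactly these two ways of falling into $\fraks$.

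Your ``main obstacle'' comes from reading the two $\pm$ signs in the side condition independently, as $x_i\notin\{x_{i+1},-x_{i+1}\}\Rightarrow\theta_i\notin\{\theta_{i+1},-\theta_{i+1}\}$. Under that reading the inclusion is false, and no reselection of the Schur basis can rescue it. Take $n=4$, $R=I_4$, $\theta=(\tfrac12,\tfrac12)$, $\xi=(1,-1)$. The matrix has angles $\tfrac12+2\pi$ and $\tfrac12-2\pi$, which differ by $4\pi$. The angles of $X$ are its eigenvalue magnitudes, so the criterion \eqref{eq:tangent-conjugate-locus} does not depend on the basis, and $X\in\fraks$ for every canonical representative. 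Your phrase ``in this particular basis'' is exactly the misconception. Consequently neither your first attempt (making shifts constant on runs via the invariance group) nor your fallback (another canonical expression outside $\fraks$) can work.

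\textbf{The missing step.} Read the signs as matched, which is how the paper's proof uses them: $\theta_i=\pm\theta_j$ with $x_i\neq\pm x_j$ gives $\alpha_i\mp\alpha_j=2\pi(x_i\mp x_j)\neq0$. The side condition then says $\theta_i=\theta_{i+1}\Rightarrow x_i=x_{i+1}$ and $\theta_i=-\theta_{i+1}\Rightarrow x_i=-x_{i+1}$. This is your last-resort reading. It needs no basis freedom, only transitivity along a run of equal magnitudes; the paper leaves this implicit in ``(iii) only occurs with adjacent indices''.

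\begin{enumerate}[(1)]
\item For $l\le k-2$, both $\theta_l$ and $\theta_{l+1}$ are nonnegative by \eqref{eq:canonical-angles}. So equal magnitude means equal value, and $x_l=x_{l+1}$.
\item Two adjacent zero angles trigger both implications, which forces their shifts to vanish.
\item Only the pair $(k-1,k)$ can carry a sign, giving $\alpha_k=\pm\alpha_{k-1}$.
\end{enumerate}

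Hence any $i<j$ in a run satisfy $\alpha_j=\pm\alpha_i$. One of $\alpha_i\pm\alpha_j$ is then $0$, and the other is $2\alpha_i=2\theta_i+4\pi x_i$. This is a nonzero multiple of $2\pi$ only in two cases. Either $\theta_i=\pi$, which is excluded by $|\theta_2|<\pi$. Or $\theta_i=0$ with $x_i\neq0$, which is excluded by observation (2). This settles your case (a) and completes the proof.
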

\begin{proof}
    By~\cref{def:canonical-decomp}, the matrices with canonical shift $\xi$ are $X = \sum_{i=1}^m R_{[i]}\sbmatrix{0 & -\alpha_i\\\alpha_i & 0}R_{[i]}^{\T}$ where $R\in\so_n$, $\alpha = \theta + 2\pi\xi$ and $\theta$ satisfies~\eqref{eq:canonical-angles}. It remains to exclude the matrices in $\fraks$. In view of~\cref{prop:preimage-scenario}[(i), (iii)], there are only two scenarios in which $X\in\fraks$: (i) there are two or more principal angles that attain the magnitude of $\pi$, and/or (iii) if there exists $i\neq j\leq k$ satisfying $\theta_i = \pm\theta_j$ but $x_i\neq \pm x_j$ so that $\alpha_i\mp\alpha_j = (\theta_i+2\pi x_i)\mp(\theta_j+2\pi x_j) = 2\pi(x_i\mp x_j)\neq 0$. Since $\theta$ subject to~\eqref{eq:canonical-angles} are ordered by decreasing magnitude, (i) only occurs with $|\theta_2| = \pi$ and (iii) only occurs with adjacent indices $i$ and $j = i+1$ for $i\leq k-1$. Excluding the scenarios (i) and (iii) yields~\eqref{eq:canonical-shift-dpreimage}. 
\end{proof}

\begin{corollary}\label{coro:dcomponent-closure}
    The closure of $\mathcal{C}_{\xi}$ constructed in~\eqref{eq:canonical-shift-dpreimage} is given by 
    \begin{equation*}
        \mathrm{closure}(\mathcal{C}_{\xi})= \left\{\textstyle \outerprodskew{R}{\alpha}:\text{$R\in\so_n, \alpha = \theta+2\pi\xi, \theta$ satisfies~\eqref{eq:canonical-angles}}\right\},
    \end{equation*}
    which has an exponential image $\so_n$, i.e., $\{\exp(X):X\in\mathrm{closure}(\mathcal{C}_{\xi})\} = \so_n$.
\end{corollary}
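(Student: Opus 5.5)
Denote by $\mathcal{D}_\xi$ the set on the right-hand side, i.e.\ all $\sum_{i=1}^m R_{[i]}\sbmatrix{0&-\alpha_i\\\alpha_i&0}R_{[i]}^{\T}$ with $R\in\so_n$, $\alpha=\theta+2\pi\xi$ and $\theta$ satisfying~\eqref{eq:canonical-angles}. The plan is to prove the two inclusions $\mathrm{closure}(\mathcal{C}_\xi)\subset\mathcal{D}_\xi$ and $\mathcal{D}_\xi\subset\mathrm{closure}(\mathcal{C}_\xi)$, and then read off the exponential image from the definition of a canonical decomposition.

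\textbf{First inclusion.} By~\eqref{eq:canonical-shift-dpreimage} we have $\mathcal{C}_\xi\subset\mathcal{D}_\xi$, so it suffices to show that $\mathcal{D}_\xi$ is closed. The set $\mathcal{D}_\xi$ is the image of the map $(R,\theta)\mapsto R(\Theta+2\pi\Xi)R^{\T}$ on $\so_n\times K$, where $\Xi$ is the block-diagonal matrix of $\xi$ and $K\subset[-\pi,\pi]^k$ is the set cut out by the non-strict inequalities~\eqref{eq:canonical-angles}. The only strict constraint in $\theta\in(-\pi,\pi]^k$ is $\theta_i>-\pi$. For $i\le k-1$ the ordering forces $\theta_i\ge|\theta_k|\ge0$, and $\theta_k\in[-\pi,\pi]$ only matters when $|\theta_k|=\pi$. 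In that case all angles have magnitude $\pi$, and I would handle the value $\theta_k=-\pi$ by the ``Extra Flip'' identity in \cref{prop:existence-canonical-decomp}, or simply include it in $K$ and check that it is harmless. With this, $K$ is compact, $\so_n\times K$ is compact, the map is continuous, and so $\mathcal{D}_\xi$ is compact and hence closed.

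\textbf{Second inclusion.} Given $X\in\mathcal{D}_\xi$ with data $(R,\theta)$, I will perturb $\theta$ to some $\theta^{(\epsilon)}\to\theta$ that satisfies the strict chain~\eqref{eq:non-conjugate-principal-angles}, keeps $|\theta^{(\epsilon)}_2|<\pi$, and keeps $\theta_k$ fixed at $0$ when $n=2k-1$. One choice is $\theta^{(\epsilon)}_i=(1-\epsilon)\theta_i-\epsilon\, i/k$, adjusted near the bottom so that $\theta^{(\epsilon)}_{k-1}>|\theta^{(\epsilon)}_k|$ and $\theta_1\le\pi$. All magnitudes are then distinct and $|\theta_2^{(\epsilon)}|<\pi$, so both conditions in~\eqref{eq:canonical-shift-dpreimage} hold vacuously. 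This gives $X^{(\epsilon)}=R(\Theta^{(\epsilon)}+2\pi\Xi)R^{\T}\in\mathcal{C}_\xi$ with $X^{(\epsilon)}\to X$.

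\textbf{Exponential image.} Every $Q\in\so_n$ has a canonical decomposition $Q=V\exp(\Sigma)V^{\T}$ by \cref{prop:existence-canonical-decomp}. Then $V(\Sigma+2\pi\Xi)V^{\T}\in\mathcal{D}_\xi$, and its exponential is $V\exp(\Sigma)\exp(2\pi\Xi)V^{\T}=Q$ because the blocks commute and $\exp(2\pi\Xi)=I$. The reverse inclusion is trivial.

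The main obstacle is the bookkeeping in the first inclusion: making sure that the boundary value $\theta_k=-\pi$ (even $n$), and limits in which the canonical shift seems to change, still lie in $\mathcal{D}_\xi$. I expect the sign-symmetry identities from \cref{prop:existence-canonical-decomp} to resolve this.
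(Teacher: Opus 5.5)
Your argument is correct and is essentially the paper's. The paper writes $\mathcal{C}_\xi=F(\so_n\times\mathcal{A}_\xi)$, moves the closure inside $F$, and obtains the exponential image from the canonical decomposition exactly as you do; your compactness-plus-density argument supplies the justification for moving the closure through $F$, which the paper leaves implicit.

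Two repairs are needed. First, take $K$ to be the closed set cut out by the displayed inequalities, with $\theta_k=-\pi$ allowed; the paper's identification of $\mathrm{closure}(\mathcal{A}_\xi)$ uses this same reading. Compactness then handles every other limit, so the ``obstacle'' you flag disappears. Do not use the Extra Flip or sign-symmetry route: those identities preserve $\exp(X)$ but change the shift of $X$, so they cannot resolve it. For example, with $n=4$ and $\xi=(0,1)$, the limit $R\,\mathrm{diag}\big(\sbmatrix{0&-\pi\\\pi&0},\sbmatrix{0&-\pi\\\pi&0}\big)R^{\T}$ of points of $\mathcal{C}_\xi$ is representable only with $\theta=(\pi,-\pi)$.

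Second, your perturbation should add decreasing offsets rather than subtract them, e.g.\ $\theta_i^{(\epsilon)}=(1-\epsilon)\theta_i+\epsilon\pi(k-i)/k$. Subtracting $\epsilon i/k$ breaks $\theta_{k-1}>|\theta_k|$ whenever $\theta_{k-1}=-\theta_k$ (e.g.\ $\theta=0$).
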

\begin{proof}
    Denote the set of constrained angles as 
    \begin{equation*}
        \mathcal{A}_{\xi}:= \left\{\theta + 2\pi\xi:\begin{aligned}
            &\theta\text{ satisfies~\eqref{eq:canonical-angles} and } |\theta_2| < \pi\\
            &\text{If $x_i\neq \pm x_{i+1}$, then $\theta_i\neq \pm\theta_{i+1}$ for $i\leq k-1$}
        \end{aligned}\right\},
    \end{equation*}
    and consider the continuous function
    \begin{equation*}
        F:\so_n\times \realset^k \to\skewm_n, (R,\alpha)\mapsto \textstyle\outerprodskew{R}{\alpha}.
    \end{equation*}
    Then, $\mathcal{C}_{\xi} = \{F(R,\alpha):R\in\so_n, \alpha\in\mathcal{A}_{\xi}\}$ and its closure is given by $\mathrm{closure}(\mathcal{C}_{\xi}) = \{F(R,\alpha):R\in\so_n, \alpha\in\mathrm{closure}(\mathcal{A}_{\xi})\}$, where $\mathrm{closure}(\mathcal{A}_{\xi}) = \{\theta+2\pi \xi:\text{$\theta$ satisfies~\eqref{eq:canonical-angles}}\}$.
    In view of~\cref{prop:existence-canonical-decomp}, any $Q\in\so_n$ admits a canonical Schur decomposition $Q= R\exp(\Theta)R^{\T}$ where $R\in\so_n$ and $\theta$ satisfies~\eqref{eq:canonical-angles}. Therefore, $RAR^{\T}$ with $\alpha = \theta + 2\pi\xi$ is in $\mathrm{closure}(\mathcal{C}_{\xi})$ for any $\xi\in\mathbb{Z}^k$. In other words, $\so_n\subset \{\exp(X):X\in\mathrm{closure}(\mathcal{C}_{\xi})\}$ holds for all $\xi\in\mathbb{Z}^k$. On the other hand, $\{\exp(X):X\in\mathrm{closure}(\mathcal{C}_{\xi})\}\subset \{\exp(X):X\in\skewm_n\} = \so_n$, which concludes the proof that $\so_n= \{\exp(X):X\in\mathrm{closure}(\mathcal{C}_{\xi})\}$ for all $\xi\in\mathbb{Z}^k$.
\end{proof}

Note that all points in $\{S_{R,\xi}(t): t\in [0,1]\}\subset \mathcal{C}_{\xi}$ are D-connected, i.e., the curve is in a D-component $\mathcal{C}_e$. Moreover, all $P$ and $Q$ in $\so_n\setminus \frakq$ can be connected by such a strict D-curve with the same canonical shift, the observation is then generalized to $\mathcal{C}_{\xi}\subset \mathcal{C}_e$ as presented in~\cref{prop:component-inclusion}.

\begin{proposition}\label{prop:component-inclusion}
    Consider a canonical shift $\xi=(x_1,\ldots, x_k)\in\mathbb{Z}^k$ (the auxiliary $x_k = 0$ when $n=2k-1$) and the set $\mathcal{C}_{\xi}$ constructed in~\eqref{eq:canonical-shift-dpreimage}. There exists a D-component $\mathcal{C}_e$ such that $\mathcal{C}_{\xi}\subset \mathcal{C}_e$.  In particular, $\mathcal{C}_{(0,0\ldots,0)}\cup \mathcal{C}_{(-1,0,\ldots,0)}\subset\mathcal{C}_*$.
\end{proposition}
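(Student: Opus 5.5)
To prove the statement it suffices to show that any two points $X,Y\in\mathcal{C}_{\xi}$ are D-connected. Since $\mathcal{C}_{\xi}\subset\skewm_n\setminus\fraks$ by \cref{prop:canonical-shift-dpreimage}, they then lie in one D-component. The plan has three steps. First, connect each point of $\mathcal{C}_\xi$ inside $\mathcal{C}_\xi$ to a strict D-preimage, i.e.\ a point whose canonical principal angles satisfy the strict inequalities \eqref{eq:non-conjugate-principal-angles}. Second, join two such strict points by the strict D-curve of \cref{prop:path-connected}. Third, treat the special component separately.

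\textbf{Step 1 (perturbation to strict angles).} Write $X=\outerprodskew{R}{\alpha}$ with $R\in\so_n$ and $\alpha=\theta+2\pi\xi$, where $\theta$ is canonical and satisfies the constraints in \eqref{eq:canonical-shift-dpreimage}. Perturb the angles linearly, $\theta(t)=(1-t)\theta+t\tilde\theta$ for $t\in[0,1]$, keeping $R$ fixed. Choose $\tilde\theta$ to obey \eqref{eq:non-conjugate-principal-angles} with $|\tilde\theta_2|<\pi$, and also to preserve every strict inequality and sign that $\theta$ already has.

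Constructing $\tilde\theta$ is done by spreading any ties. If $\theta_i=\theta_{i+1}$ (or $\theta_{k-1}=|\theta_k|$), then the constraint in \eqref{eq:canonical-shift-dpreimage} forces $x_i=\pm x_{i+1}$. Moving $\theta_i$ slightly up and $\theta_{i+1}$ slightly down then keeps $\alpha_i\pm\alpha_{i+1}$ away from nonzero multiples of $2\pi$ along the whole path. The reason is that the relevant difference $\alpha_i\mp\alpha_{i+1}=\theta_i\mp\theta_{i+1}$ stays small, while the other combination was already away from $2\pi\mathbb{Z}\setminus\{0\}$ and moves only slightly.

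Along the path, $\theta(t)$ stays canonical (convex combinations of ordered vectors remain ordered), and $|\theta_2(t)|<\pi$. Ties only appear at $t=0$, where they were already admissible. Hence $\theta(t)$ satisfies the conditions of \eqref{eq:canonical-shift-dpreimage} for all $t$, so the path lies in $\mathcal{C}_\xi\subset\skewm_n\setminus\fraks$ and is a D-curve. One subtlety is the entry $\theta_1=\pi$; I would leave $\theta_1$ unchanged in that case.

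\textbf{Step 2 (joining strict points).} Take two strict points $X'=V\Sigma' V^{\T}$ and $Y'=W\Theta' W^{\T}$ with the same shift $\xi$, where $V,W\in\so_n$. Since $\so_n$ is path-connected, there is a continuous $R(t)$ from $V$ to $W$, and the curve $S_{R,\xi}(t)$ of \cref{prop:path-connected} connects $X'$ and $Y'$. It is a strict D-curve because its principal angles interpolate linearly and remain strictly ordered. Concatenating the three paths $X\to X'\to Y'\to Y$ gives a D-curve from $X$ to $Y$.

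\textbf{Step 3 (the special component).} For the particular claim, note that $\mathcal{C}_{(0,\ldots,0)}\subset\mathcal{C}_*$ directly from \eqref{eq:star-component}: under canonical angles with $|\theta_2|<\pi$, we have $|\theta_i\pm\theta_j|\le|\theta_1|+|\theta_2|<2\pi$. For $\xi=(-1,0,\ldots,0)$, take any strict point whose canonical angles have $\theta_1<\pi$ and $\theta_1>|\theta_j|$ for all $j\ge2$. By \cref{prop:dconnected-dpreimage}, its partner preimage with first angle $\theta_1-2\pi$ is D-connected to the principal one. Explicitly, the path $\alpha_1(t)$ from $\theta_1$ through $\pi$ to $\theta_1-2\pi$ with the other angles frozen never crosses the lines $u,v=2\pi l$, because $|\theta_j|<\theta_1<\pi$ and the path runs through $\theta_1\to\pi\equiv$ angle $\pi$ with $|\alpha_1\pm\theta_j|<2\pi$. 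So this point lies in both $\mathcal{C}_{(-1,0,\ldots)}$ and $\mathcal{C}_*$, and the first part of the statement gives $\mathcal{C}_{(-1,0,\ldots,0)}\subset\mathcal{C}_*$.

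\textbf{Main obstacle.} I expect the main difficulty to be Step 1: checking carefully that the tie-breaking perturbation never makes some $\alpha_i\pm\alpha_j$ hit $2\pi l$ with $l\neq0$. This is most delicate for the sign-sensitive last angle $\theta_k$ when $n$ is even.
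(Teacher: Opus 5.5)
Your proof is correct and follows the same skeleton as the paper. You push a non-strict point of $\mathcal{C}_{\xi}$ to a strict one by moving its canonical angles with the basis $R$ frozen, join strict points by the curve $S_{R,\xi}$ of \cref{prop:path-connected}, and exhibit a point of $\mathcal{C}_{(-1,0,\ldots,0)}$ inside $\mathcal{C}_*$. The only difference is in the perturbation step, where the paper invokes openness of the D-component and keeps the path in a small ball while you argue directly; your anticipated ``main obstacle'' is moot, because for $t\in(0,1]$ the interpolated angles are strictly ordered, so each point on the path is a strict D-preimage and lies outside $\fraks$ however large the perturbation.
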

\begin{proof}
    It suffices to show that $\mathcal{C}_{\xi}$ are D-connected. Firstly, all strict D-preimages with the canonical shifts $\xi$ are D-connected via the strict D-curve~\eqref{eq:strict-dcurve}. To show that any non-strict D-preimages $X\in \mathcal{C}_{\xi}$ are D-connected with the strict D-preimages in $\mathcal{C}_{\xi}$, let $X$ be located in a D-component $\mathcal{C}_e$. Since $\mathcal{C}_e$ is an open set, see~\cite[Corollary 4.4]{deng2025expskew}, there exists an open ball contained in $\mathcal{C}_e$ that is centered at $X$ with radius $\delta>0$:
    \begin{equation*}
        \mathcal{B}_{X}(\delta):=\{Y\in\skewm_n:\|Y-X\|_2 < \delta\}\subset \mathcal{C}_e.
    \end{equation*}
    
    Let $X = \outerprodskew{V}{\alpha}$ be a canonical Schur decomposition, i.e., $V\in\so_n$, $\alpha=\theta+2\pi\xi$ and $\theta$ satisfies~\eqref{eq:canonical-angles}. Choose $\varepsilon := \{\varepsilon_1,\ldots, \varepsilon_k\}\in (-\delta, \delta)^k$ such that $\theta+t\varepsilon \in \prng^k$ satisfies~\eqref{eq:non-conjugate-principal-angles} for all $t\in (0, 1]$, see an illustration in~\cref{fig:proof-of-shifts}. Consider the curve 
    \begin{equation*}
        X_{\varepsilon}(t):= \textstyle\sum_{i=1}^m V_{[i]}\big[\begin{smallmatrix}0 & -\alpha_i-t\,\varepsilon_i\\ \alpha_i + t\,\varepsilon_i & 0\end{smallmatrix}\big]V_{[i]}^{\T}\qquad \text{where $X_{\varepsilon}(t)\in \mathcal{B}_X(\delta)$ holds for all $t\in [0, 1]$,}
    \end{equation*}
    as $\|X_{\varepsilon}(t)-X\|_2  = \max_{i\leq k}|t\varepsilon_i| < t\delta$. By~\cref{def:canonical-decomp}, the canonical angles of $X_{\varepsilon}(t)$ are given by $\theta+ t\varepsilon+2\pi \xi$ and the canonical shifts remains $\xi$ for all $t\in[0,1]$. Moreover, $X_{\varepsilon}(t), t\in(0, 1]$ are strict D-preimages because the angles $|\theta_i+t\,\varepsilon_i|$ are distinct. In view of~\cref{prop:path-connected}, any strict D-preimages $Y$ with the canonical shift $\xi$ is D-connected with the strict D-preimages $X_{\varepsilon}(1)$ via~\eqref{eq:strict-dcurve}, see illustration in~\cref{fig:proof-of-shifts}. Thus all strict D-preimages $\mathcal{C}_{\xi}$ are D-connected with all non-strict D-preimages in $\mathcal{C}_{\xi}$, which concludes that all points in $\mathcal{C}_{\xi}$ are D-connected. Consequently, $\mathcal{C}_{\xi}$ must be contained in one of the D-components.
    \begin{figure}[tbp]
    \centering
    \begin{subfigure}[b]{0.48\textwidth}
        \centering
        \includegraphics[width=\textwidth]{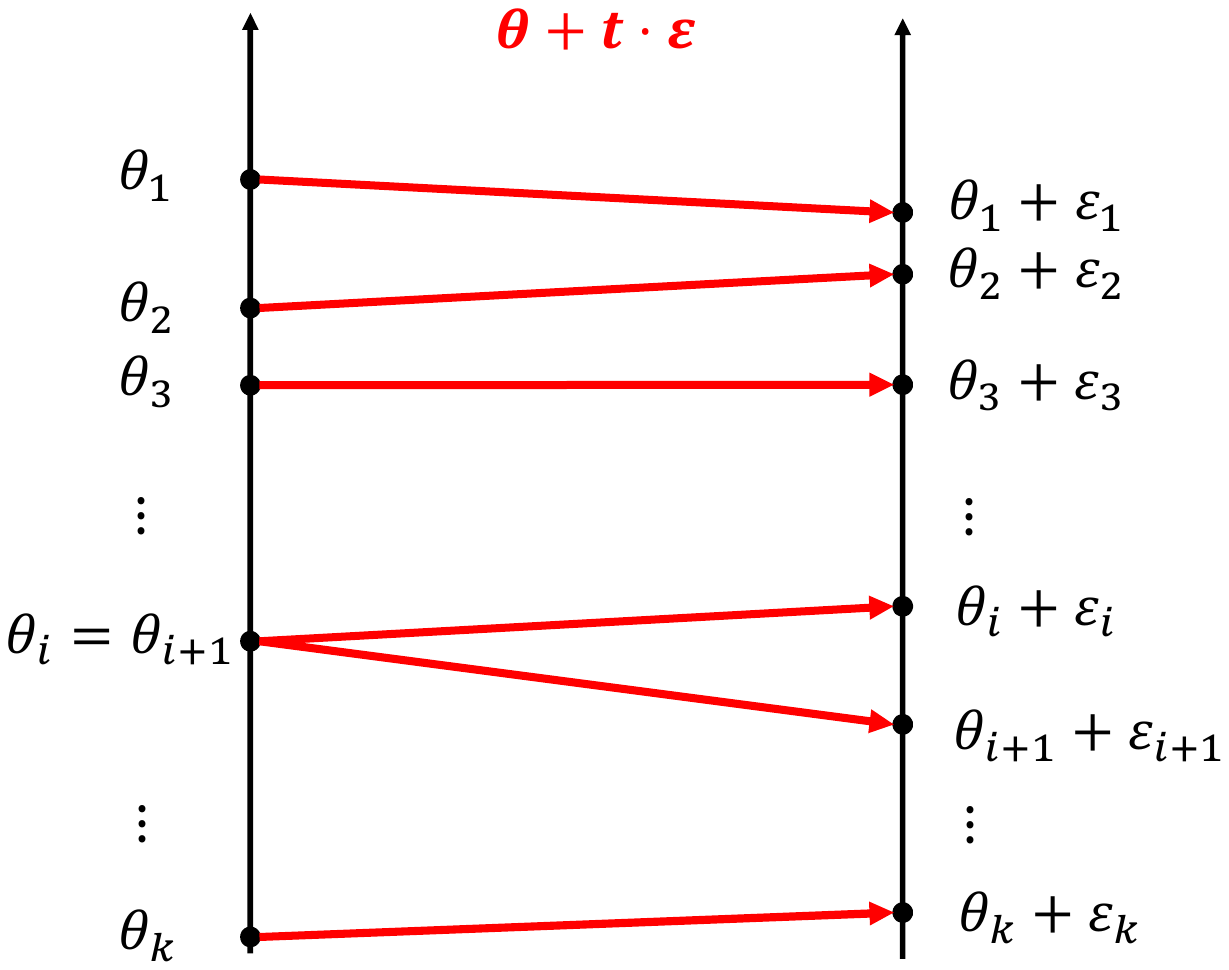}
    \end{subfigure}
    \hfill
    \begin{subfigure}[b]{0.48\textwidth}
        \centering
        \includegraphics[width=\textwidth]{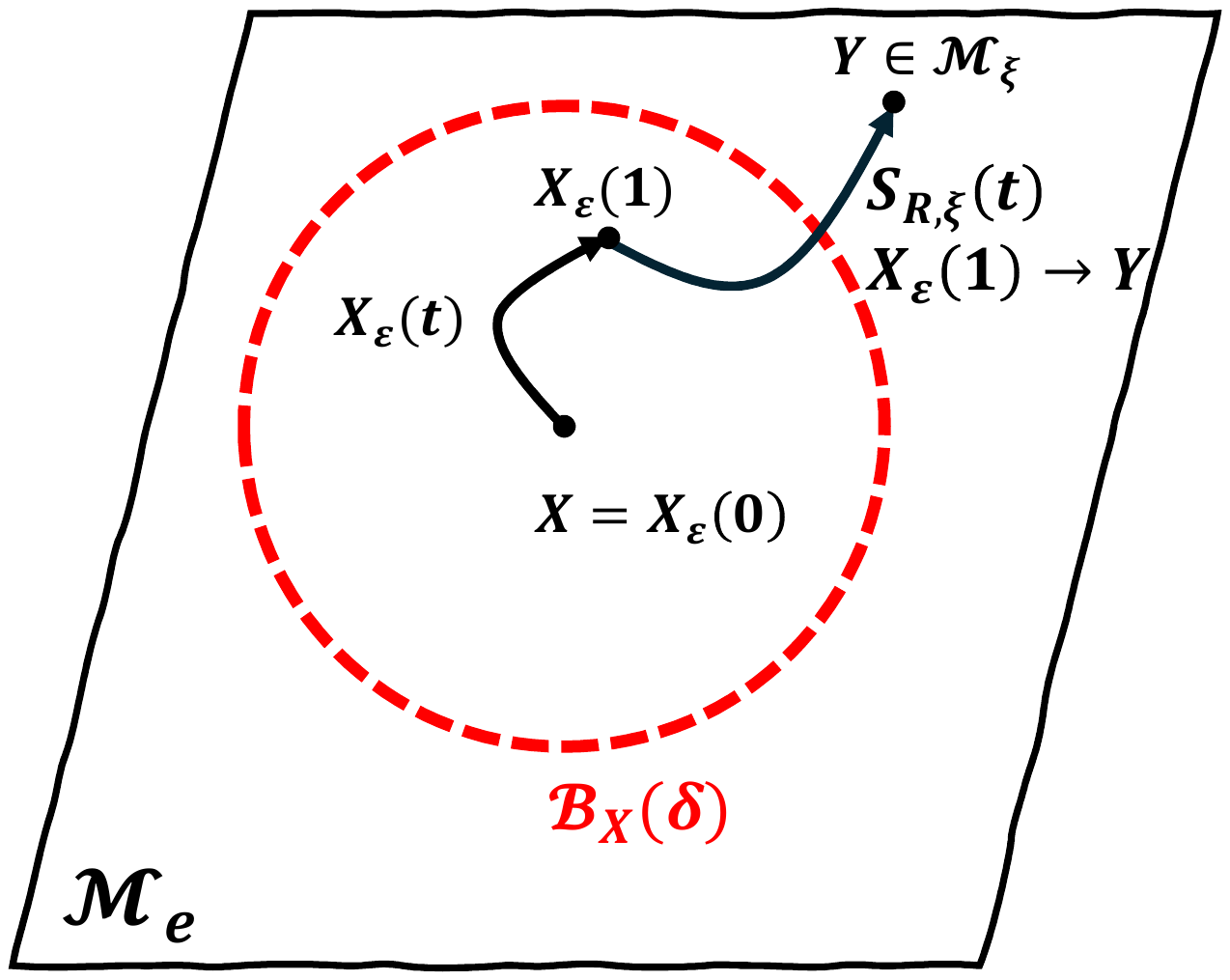}
    \end{subfigure}
    \caption{Illustration for the proof of~\cref{prop:component-inclusion}.}\label{fig:proof-of-shifts}
\end{figure}

    To see that $\mathcal{C}_{(0,0\ldots,0)}\cup \mathcal{C}_{(-1,0,\ldots,0)}\subset\mathcal{C}_*$, consider some $Q\in\so_n\setminus\frakq$ with the canonical principal angle bounded by $\theta_1 < \pi$. By~\cref{def:conjugate-locus}, all preimages of $Q\in\so_n\setminus\frakq$ are D-preimages. In view of~\cref{prop:dconnected-dpreimage}, $\theta_1$ uniquely attains the largest magnitude among principal angles. Consequently, the D-preimages in $\mathcal{C}_{*}$, see~\eqref{eq:dconnected-dpreimage}, are specified as
    \begin{equation*}
        \begin{cases}
            X = \sum_{i=1}^m R_{[i]}\sbmatrix{0 & -\theta_i\\\theta_i & 0}R_{[i]}^{\T}\\
            Y = R_{[1]}\sbmatrix{0 & -\theta_1 + 2\pi\\\theta_1 -2\pi & 0}R_{[1]}^{\T}+\sum_{i=2}^m R_{[i]}\sbmatrix{0 & -\theta_i\\\theta_i & 0} R_{[i]}^{\T}.
        \end{cases}
    \end{equation*}
    Notice that $X\in\mathcal{C}_{(0,0,\ldots, 0)}$ and, since $-\pi<\theta_1-2\pi$, $Y \in \mathcal{C}_{(-1,0,\ldots,0)}$. According to the result proven above, $\mathcal{C}_{(0,0\ldots,0)}\cup \mathcal{C}_{(-1,0,\ldots,0)}\subset\mathcal{C}_*$ follows. 
\end{proof}

In addition to the inclusion relationship between $\mathcal{C}_{\xi},\xi\in\mathbb{Z}^k$ and D-components $\mathcal{C}_e, e\in\mathcal{E}$, \cref{thm:canonical-label} further identifies D-components with canonical shifts. 

\begin{theorem}\label{thm:canonical-label}
    Consider a $\mathcal{C}_{\xi}$ constructed in~\eqref{eq:canonical-shift-dpreimage} and the D-component $\mathcal{C}_e$ that contains $\mathcal{C}_{\xi}$, i.e., $\mathcal{C}_{\xi}\subset\mathcal{C}_e$. If $\xi\neq (0,0,\ldots,0), (-1,0,\ldots, 0)$, then $\mathcal{C}_{\xi}=\mathcal{C}_e$, while $\mathcal{C}_{(0,0\ldots,0)}\cup \mathcal{C}_{(-1,0,\ldots,0)}=\mathcal{C}_*$. Consequently, the label set of D-components is identified by
    \begin{equation}\label{eq:canonical-label}
        \mathcal{E} = \{*\}\cup \Xi,\qquad \text{where $\Xi = \mathbb{Z}^k\setminus \{(0,0,\ldots,0), (-1,0,\ldots, 0)\}$.}
    \end{equation}
\end{theorem}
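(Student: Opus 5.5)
The plan is to show that the sets $\mathcal{C}_{\xi}$, $\xi\in\mathbb{Z}^k$, partition $\skewm_n\setminus\fraks$, and that no D-curve can cross from one $\mathcal{C}_\xi$ into another except between $\mathcal{C}_{(0,\ldots,0)}$ and $\mathcal{C}_{(-1,0,\ldots,0)}$. Combined with \cref{prop:component-inclusion}, this forces each D-component to be exactly one $\mathcal{C}_\xi$, or the union of the two special ones.

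\textbf{Step 1: partition.} Every $X\in\skewm_n\setminus\fraks$ has a canonical Schur decomposition. Apply \cref{prop:existence-canonical-decomp} to $Q=\exp(X)$, then use \cref{prop:shared-basis} and \cref{coro:dpreimage-shifted-angle} to write $X$ with $V\in\so_n$ and $\alpha=\theta+2\pi\xi$, where $\theta$ is canonical. Then show that $\xi$ is uniquely determined by $X$. The point is that for a D-preimage, any two Schur bases of $Q$ are Schur bases of $X$. Ties $|\theta_i|=|\theta_{i+1}|$ with $x_i\neq\pm x_{i+1}$ are excluded by \eqref{eq:canonical-shift-dpreimage}, so the reorderings and flips allowed among tied canonical angles either permute equal shifts or change $x_i$ into $\pm x_i$ consistently. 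I expect this uniqueness to need some care at ties. Some boundary cases (such as $\theta_1=\pi$ with sign ambiguity, or $\theta_k=0$ with a sign ambiguity when $n=2k$) may need to be reconciled by the convention itself, which I would check directly. Once uniqueness holds, the $\mathcal{C}_\xi$ are pairwise disjoint and cover $\skewm_n\setminus\fraks$.

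\textbf{Step 2: local constancy of $\xi$ (main obstacle).} Show that $X\mapsto\xi(X)$ is locally constant on $\skewm_n\setminus\fraks$ away from the interface between $(0,\ldots,0)$ and $(-1,0,\ldots,0)$. Take $X$ with canonical data $(V,\theta,\xi)$ and nearby $X'$ in the open ball $\mathcal{B}_X(\delta)\subset\skewm_n\setminus\fraks$. By continuity of the eigenvalues, the angles of $X'$ are close to $\alpha$, so the principal angles of $\exp(X')$ are close to $\theta$ modulo reordering within tied groups and modulo wrapping at $\pm\pi$.

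Reordering within tied groups is harmless by Step 1. Wrapping at $\pi$ is the only way $\xi$ can jump. If $\theta_1=\pi$, a small perturbation can replace $\theta_1$ by $-\pi+\epsilon$. After renormalisation (a flip, or a pair flip), the shift becomes $x_1\mapsto x_1+1$, or a sign change. Here I would use the fact that $X\notin\fraks$, via the chessboard characterization of \cref{lemma:dconnected-uv}, with pattern (a) excluded since $|\theta_2|<\pi$. This should show that crossing $|\alpha_1|$ through an odd multiple of $\pi$ keeps $\alpha$ in the same chessboard tile only when the relevant trajectories stay inside the central box $|u|,|v|<2\pi$. That happens exactly when $\alpha_j=\theta_j$ for all $j\ge2$ and $\alpha_1\in\{\pi,-\pi\}$, i.e.\ $\xi\in\{(0,\ldots,0),(-1,0,\ldots,0)\}$. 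In all other cases, the angles $\alpha_1\pm\alpha_j$ would have to cross some $2\pi l$ with $l\neq0$, unless the canonical relabeling leaves $\xi$ unchanged.

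I expect the bookkeeping of the flip rules in \cref{prop:existence-canonical-decomp} under a wrap to be the hardest part. I would handle it by comparing $\alpha$ directly: the multiset $\{\pm\alpha_i\}$ varies continuously along a D-curve. Also, $\xi$ is recovered from $\alpha$ by canonical sorting of $\alpha_i \bmod 2\pi$. Any change of $\xi$ therefore requires some $\alpha_i$ to pass through an odd multiple of $\pi$, which I would analyze case by case as above.

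\textbf{Step 3: conclusion.} From Step 2, each $\mathcal{C}_\xi$ with $\xi\notin\{(0,\ldots,0),(-1,0,\ldots,0)\}$ is open and closed relative to $\skewm_n\setminus\fraks$, and the union of the two special sets is as well. A D-component $\mathcal{C}_e$ is connected, so it lies in exactly one of these pieces. Together with \cref{prop:component-inclusion}, which gives the reverse inclusion, this yields $\mathcal{C}_\xi=\mathcal{C}_e$ and $\mathcal{C}_{(0,\ldots,0)}\cup\mathcal{C}_{(-1,0,\ldots,0)}=\mathcal{C}_*$. Each such set is nonempty, since for example it contains preimages of points of $\so_n\setminus\frakq$ (by \cref{coro:dcomponent-closure} and the measure-zero statement \cref{prop:closed-conjugate-locus}). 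Hence $\mathcal{E}=\{*\}\cup\Xi$, as claimed.
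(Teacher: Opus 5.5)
Your Step~2 fails, and the failure cannot be repaired. You claim that when $\alpha_1$ wraps through an odd multiple of $\pi$, every angular trajectory stays in its tile only inside the central box, so $\xi$ can jump only between $(0,\ldots,0)$ and $(-1,0,\ldots,0)$. This is false. At $\alpha_1=(2l+1)\pi$, having $\alpha_1\pm\alpha_j\in 2\pi\mathbb{Z}$ would require $\alpha_j\equiv\pi \pmod{2\pi}$, and $|\theta_2|<\pi$ rules that out. So the wrap never meets a wall, in any tile, and the canonical shift jumps there for every $\xi$.

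Concretely, take $n=3$. Here $\fraks=\{X:|\alpha_1|\in 2\pi\mathbb{Z}_{>0}\}$ and $|\alpha_1|=\|X\|_{\F}/\sqrt{2}$. The curve $S(t)=R_{[1]}\big[\begin{smallmatrix}0&-t\\ t&0\end{smallmatrix}\big]R_{[1]}^{\T}$, $t\in[5\pi/2,7\pi/2]$, is a D-curve. Its canonical shift is $(1,0)$ for $t\le 3\pi$ and $(-2,0)$ for $t\ge 3\pi$. The reason is that canonical angles require $\theta_1\in[0,\pi]$, so for $t>3\pi$ one must write $-t=(4\pi-t)+2\pi(-2)$ in the basis $[R_2,R_1,-R_3]\in\so_3$. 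At $t=3\pi$ both shifts are canonical. So the boundary case you planned to ``reconcile by the convention'' in Step~1 is a genuine overlap.

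In general, suppose the canonical $\theta_1=\pi$. Swap the columns of $V_{[1]}$ and restore $\det V=1$, via $V_n\mapsto -V_n$ for odd $n$ or by swapping the columns of $V_{[k]}$ for even $n$. This gives a second canonical decomposition of the same matrix with shift $(-1-x_1,x_2,\ldots,x_{k-1},-x_k)$. That is exactly the mechanism gluing $\mathcal{C}_{(0,\ldots,0)}$ to $\mathcal{C}_{(-1,0,\ldots,0)}$, and nothing in it is specific to the zero shift.

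So the statement as written is false, and no bookkeeping will close the gap. For $n=3$, $\mathcal{C}_{(1,0)}=\{X:2\pi<|\alpha_1|\le 3\pi\}$ is not even open. The D-component containing it is the shell $\{2\pi<|\alpha_1|<4\pi\}=\mathcal{C}_{(1,0)}\cup\mathcal{C}_{(-2,0)}$.

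The paper's proof does no local analysis. It takes a generic $Q\notin\frakq$, forms the preimages $X_\xi$ and $X_\eta$ in one canonical basis, and invokes \cref{prop:dconnected-dpreimage} to force $\{\xi,\eta\}=\{(0,\ldots,0),(-1,0,\ldots,0)\}$. The same example refutes that proposition. For $0<\sigma_1<\pi$, the matrices with angles $\sigma_1+2\pi$ and $\sigma_1-4\pi$ in one Schur basis are distinct preimages of the same $Q$. Both lie in that connected shell, outside $\mathcal{C}_*$. The underlying gap is in \cref{prop:continuous-angles}: its proof handles transpositions but not a lone sign flip $\delta_i=-1$, which is exactly what a wrap through $\pm\pi$ produces.

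Your partition-and-clopen strategy is actually a good tool for finding the correct statement. Done correctly, Step~2 shows that $\xi$ is locally constant only modulo the involution $\xi\mapsto(-1-x_1,x_2,\ldots,x_{k-1},-x_k)$. That suggests each D-component is a union $\mathcal{C}_\xi\cup\mathcal{C}_{\xi'}$ of two such sets, not a single one.
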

\begin{proof}
    It suffices to prove the reversed inclusion relationship. Consider a D-preimage $Y\in\mathcal{C}_e$ and let $\eta = (y_1,\ldots, y_k)$ be its canonical shift in $Y = \outerprodshift{R}{\theta}{y}$. Then $Y\in\mathcal{C}_{\eta}$ and~\cref{prop:component-inclusion} implies $\mathcal{C}_{\eta}\subset\mathcal{C}_e$. On the other hand, $\mathcal{C}_{\xi}\subset\mathcal{C}_e$ is assumed.
    
    Consider a $Q=R\exp(\Sigma)R^{\T}\in\so_n\setminus\frakq$ with the canonical principal angles $\sigma\in\prng^k$ under the same basis $R\in\so_n$ of $Y$, such that $\sigma$ satisfies~\eqref{eq:non-conjugate-principal-angles}. By~\cref{def:conjugate-locus}, all preimages of $Q\in\so_n\setminus\frakq$ are D-preimages. In particular, $X_{\xi}=\outerprodshift{R}{\sigma}{x}$ and $X_{\eta}=\outerprodshift{R}{\sigma}{y}$ are D-preimages. Consequently, $X_{\xi}\in\mathcal{C}_{\xi}$ and $X_{\eta}\in\mathcal{C}_{\eta}$ are D-connected in $\mathcal{C}_e$. In view of~\cref{prop:dconnected-dpreimage}, if $\xi\neq \eta$, then $X_{\xi}\neq X_{\eta}$ are only obtained in $\mathcal{C}_*$ in the form of~\eqref{eq:dconnected-dpreimage}, which implies $Y\in \mathcal{C}_{(0,0\ldots,0)}\cup \mathcal{C}_{(-1,0,\ldots,0)}$. Since $Y$ is arbitrary in $\mathcal{C}_e$, it holds that $\mathcal{C}_e\subset \mathcal{C}_{(0,0\ldots,0)}\cup \mathcal{C}_{(-1,0,\ldots,0)}\subset \mathcal{C}_*$, i.e., $\mathcal{C}_e = \mathcal{C}_* = \mathcal{C}_{(0,0\ldots,0)}\cup \mathcal{C}_{(-1,0,\ldots,0)}$. If $\xi=\eta$ and they are neither $(0,0,\ldots,0)$ nor $(-1,0,\ldots, 0)$, then $Y\in\mathcal{C}_{\xi}$ holds for all $Y\in\mathcal{C}_e$. Thus $\mathcal{C}_e\subset\mathcal{C}_{\xi}\subset\mathcal{C}_e$, i.e., $\mathcal{C}_e = \mathcal{C}_{\xi}$.
\end{proof}

\section{Diffeomorphic Logarithm and its Algorithm}\label{sec:diffeomorphic-logarithm}

Having fully characterized the D-components in $\skewm_n$, the D-preimages $X$ of $Q$ that are D-connected to a given $S\in\skewm_n\setminus\fraks$ can be completely described. If $S\in\mathcal{C}_{\xi}$ for some $\xi\in\Xi$, i.e., $S\notin\mathcal{C}_*$, then there is at most one preimage D-connected to $S$. If $S\in\mathcal{C}_*$, there are at most two D-preimages D-connected to $S$. Among these two preimages in $\mathcal{C}_*$, we adopt choosing the nearest preimage (to the reference point $S$) as the selection strategy. The choice of distances/norm for measuring is discussed in~\cref{subsec:measure-choice}. Then, the algorithm for computing the diffeomorphic logarithm and the associated subroutines are presented in~\cref{subsec:algorithms}.

\begin{definition}\label{def:diffeomorphic-logarithm}

    Let $S\in\mathcal{C}_e$ for some $e\in\cale=\{*\}\cup\Xi$ and let
    $Q\in\so_n$. Suppose that
    $\exp^{-1}(Q)\cap\mathcal{C}_e\neq\emptyset$.
    The \emph{diffeomorphic logarithm} of $Q$ at $S$ is defined by
    \[
    \mathrm{dlog}_S(Q)
    :=\underset{X\in\exp^{-1}(Q)\cap\mathcal{C}_e}{\argmin}\|X-S\|,
    \]
    where $\|\cdot\|$ is a matrix norm. If $e\in\Xi$, the minimum is attained with the only D-preimage of $Q$ in $\mathcal{C}_e$. If $e=*$ and the minimum is attained at two points, the tie is broken by choosing the only element that lies in $\mathcal{C}_{(0,0,\ldots,0)}$. If such a tie is attained with both $X$ and $Y$ in $\mathcal{C}_{(0,0,\ldots,0)}$, the diffeomorphic logarithm is not defined.
\end{definition}

Note that the diffeomorphic logarithm does not always exists, due to the obstruction in either~\cref{prop:preimage-scenario}[(i) or (iii)]. Nevertheless, \cref{coro:dcomponent-closure} guarantees that every $Q\in\so_n$ has a preimage in the closure of each $\mathcal{C}_{\xi}$ for $\xi\in\mathbb{Z}^k$ (the auxiliary $x_k=0$ when $n=2k-1$),  which provides a fallback whenever the diffeomorphic logarithm does not exist. Here, $\mathcal{C}_{\xi}$ includes those D-components with $\xi\in\Xi$ as well as those contained in $\mathcal{C}_*$ with $\xi \in \{(0,0,\ldots,0),\,(-1,0,\ldots,0)\}$.

\subsection{Choices of Matrix Norm}\label{subsec:measure-choice}

In the region $\mathcal{C}_*$, each $Q\in \so_n$ admits at most two preimages $X$ and $Y$ in the form of~\eqref{eq:dconnected-dpreimage}. The diffeomorphic logarithm is defined by selecting, relative to a reference point $S$, the closer preimage under a chosen matrix norm. While the analysis leading to this construction primarily employs the matrix $2$-norm, the selection rule itself remains valid for any matrix norm, and it induces a maximal domain of diffeomorphism relative to that choice. In practice, adopting the Frobenius norm is particularly advantageous, as it significantly simplifies the distance comparison to computing an inner product with $S$ as presented in~\cref{prop:nearest-connected-preimage}.

Recall that $R_{[i]} = \sbmatrix{R_{2i-1} & R_{2i}}$ refers to the $(2i-1)$th and $(2i)$th columns in $R$.

\begin{proposition}\label{prop:nearest-connected-preimage}
    Consider two matrices $X$ and $Y$ in $\mathcal{C}_*$ with the forms
    \begin{equation*}
        \begin{cases}
            X = \sum_{i=1}^m R_{[i]}\sbmatrix{0 & -\theta_i\\\theta_i & 0}R_{[i]}^{\T},\\
            Y = R_{[1]}\sbmatrix{0 & -\theta_1 + 2\pi\\\theta_1 -2\pi & 0}R_{[1]}^{\T}+\sum_{i=2}^m R_{[i]}\sbmatrix{0 & -\theta_i\\\theta_i & 0} R_{[i]}^{\T}.
        \end{cases}
    \end{equation*}
    For $S\in\skewm_n$, $\|X-S\|_{\F}\!<\!\|Y-S\|_{\F}$ if and only if $|\theta_1 - R_{2}^{\T}SR_{1}| \!<\! |\theta_1 -2\pi - R_{2}^{\T}SR_{1}|$.
\end{proposition}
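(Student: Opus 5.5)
The plan is to expand both squared Frobenius distances and compare them. We have $\|X-S\|_{\F}^2-\|Y-S\|_{\F}^2 = \|X\|_{\F}^2-\|Y\|_{\F}^2 - 2\langle X-Y, S\rangle_{\F}$. The two matrices differ only in the first block, so
\begin{equation*}
    X - Y = R_{[1]}\sbmatrix{0 & -2\pi\\ 2\pi & 0}R_{[1]}^{\T} = 2\pi\,(R_2R_1^{\T} - R_1R_2^{\T}).
\end{equation*}
Hence the inner product term reduces to $\langle X-Y,S\rangle_{\F} = 2\pi\,\mathrm{tr}\big((R_2R_1^{\T}-R_1R_2^{\T})^{\T}S\big) = 2\pi\,(R_2^{\T}SR_1 - R_1^{\T}SR_2)$. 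Skew-symmetry of $S$ gives $R_1^{\T}SR_2 = -R_2^{\T}SR_1$, so this equals $4\pi\, R_2^{\T}SR_1$. Write $c := R_2^{\T}SR_1$.

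Next we compute the norms. Since $R$ is orthogonal, $\|X\|_{\F}^2 = 2\sum_i\theta_i^2$ and $\|Y\|_{\F}^2 = 2(\theta_1-2\pi)^2 + 2\sum_{i\ge2}\theta_i^2$, so the difference is $2\theta_1^2 - 2(\theta_1-2\pi)^2$. Therefore
\begin{equation*}
    \|X-S\|_{\F}^2-\|Y-S\|_{\F}^2 = 2\big[\theta_1^2-(\theta_1-2\pi)^2 - 4\pi c\big].
\end{equation*}
In parallel, $(\theta_1-c)^2-(\theta_1-2\pi-c)^2 = \theta_1^2-(\theta_1-2\pi)^2 - 2c\,\theta_1 + 2c(\theta_1-2\pi) = \theta_1^2-(\theta_1-2\pi)^2-4\pi c$. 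The two differences agree up to the positive factor $2$, so the sign comparisons coincide. Taking square roots of nonnegative quantities then gives the stated equivalence.

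There is no real obstacle here. The only delicate point is getting the sign convention of the inner product $\langle R_{[1]}JR_{[1]}^{\T},S\rangle$ correct, i.e., checking that it is $+2R_2^{\T}SR_1$ rather than its negative for $J=\sbmatrix{0&-1\\1&0}$. I will verify this by writing $R_{[1]}JR_{[1]}^{\T} = R_2R_1^{\T}-R_1R_2^{\T}$ directly. Membership in $\mathcal{C}_*$ plays no role in the computation; the argument holds for any $S\in\skewm_n$.
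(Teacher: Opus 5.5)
Your proof is correct and is essentially the paper's argument: a direct computation of the two squared Frobenius distances in which everything except the first Schur block cancels, leaving a comparison of $(\theta_1 - R_2^{\T}SR_1)^2$ with $(\theta_1-2\pi-R_2^{\T}SR_1)^2$. The only cosmetic difference is that you isolate the first block through the polarization identity and the rank-two difference $X-Y$. The paper instead expands $\|R^{\T}(X-S)R\|_{\F}^2$ blockwise and observes that the off-diagonal blocks and the blocks $i\geq 2$ contribute equally to both distances.
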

\begin{proof}
    Both $R^{\T}XR$ and $R^{\T}YR$ are block diagonal with the skew-symmetric blocks:
    \begin{equation*}
        (R^{\T}SR)_{[i,i]} = R_{[i]}^{\T}SR_{[i]} = \sbmatrix{0 & R_{2i-1}^{\T}SR_{2i}\\R_{2i}^{\T}SR_{2i-1} &0}, i\leq m.
    \end{equation*}
    Thus, the two equalities $\|X-S\|_{\F}^2 = \sum_{i\neq j}\|(R^{\T}SR)_{[i,j]}\|_{\F}^2 + \sum_{i=1}^m 2(\theta_i - R_{2i}^{\T}SR_{2i-1})^2$ and $\|Y-S\|_{\F}^2 = \sum_{i\neq j}\|(R^{\T}SR)_{[i,j]}\|_{\F}^2 + 2(\theta_1 -2\pi  - R_{2}^{\T}SR_{1})^2+ \sum_{i=2}^m 2(\theta_i - R_{2i}^{\T}SR_{2i-1})^2$ hold and the desired result follows.
\end{proof}

Any two matrix norms $\|\cdot \|_{a}$ and $\|\cdot\|_b$ are equivalent in the sense that there exists $r, s > 0$, such that $r\|M\|_a\leq \|M\|_b\leq s\|M\|_a$ holds for all $M\in\realmat{n}$ (see, e.g.,~\cite{golub2013matrix} and~\cite{horn2012matrix}). In particular, if there exists a D-preimage $X$ of $Q$ that satisfies $\|X-S\|_2 < \pi$, the bounds between the matrix Frobenius norm, $2$-norm and max-norm ensure that $X$ is the nearest preimage to $S$ in both Frobenius norm and $2$-norm. This is formalized in~\cref{lemma:matrix-norm-bound} and~\cref{def:nearest-frobenius}.

\begin{lemma}\label{lemma:matrix-norm-bound}
    For $M = [M_{i,j}]_{i,j\leq n}\in\realmat{n}$, its matrix $2$-norm is bounded by the Frobenius norm $\|M\|_{\F} := \sqrt{\sum_{i,j}^n M_{i,j}^2}$ and $\|M\|_{\max}:= \max_{i,j\leq n}|M_{i,j}|$ in the form of 
    \begin{equation}\label{eq:matrix-norm-bound}
        \|M\|_{\max} \leq \|M\|_2 \leq \|M\|_{\F}.
    \end{equation}
\end{lemma}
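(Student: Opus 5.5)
We prove the two inequalities in \eqref{eq:matrix-norm-bound} separately, using standard basis vectors for the lower bound and the Cauchy--Schwarz inequality for the upper bound. Recall that $\|M\|_2 = \max_{\|x\|_2 = 1}\|Mx\|_2$, which equals the largest singular value of $M$.

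For the lower bound $\|M\|_{\max}\leq\|M\|_2$, let $e_j$ and $e_i$ be the $j$-th and $i$-th standard basis vectors. Then $M_{i,j} = e_i^{\T} M e_j$. Cauchy--Schwarz together with the definition of the operator norm gives
\begin{equation*}
    |M_{i,j}| = |e_i^{\T} M e_j| \leq \|e_i\|_2\,\|Me_j\|_2 \leq \|M\|_2 .
\end{equation*}
Taking the maximum over all $i,j\leq n$ yields $\|M\|_{\max}\leq\|M\|_2$.

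For the upper bound $\|M\|_2\leq\|M\|_{\F}$, take any unit vector $x$ and write $(Mx)_i = \sum_j M_{i,j}x_j$. Applying Cauchy--Schwarz row by row gives $(Mx)_i^2 \leq \big(\sum_j M_{i,j}^2\big)\|x\|_2^2$. Summing over $i$ gives $\|Mx\|_2^2 \leq \|M\|_{\F}^2$, and taking the supremum over unit $x$ concludes the proof.

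Alternatively, one can use singular values: $\|M\|_{\F}^2 = \sum_\ell s_\ell^2 \geq s_1^2 = \|M\|_2^2$. I expect no genuine obstacle here. The only care needed is to keep the max-norm (entrywise) distinct from the induced $\infty$-norm, since the lemma concerns the entrywise maximum.
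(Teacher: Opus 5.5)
Your proof is correct. The lower bound $|M_{i,j}| = |e_i^{\T}Me_j|\le\|Me_j\|_2\le\|M\|_2$ is complete, and so is the upper bound, whether via row-wise Cauchy--Schwarz or via $\|M\|_{\F}^2=\sum_\ell s_\ell^2\ge s_1^2$. The paper does not prove this lemma itself; it treats it as a standard fact, citing Golub--Van Loan and Horn--Johnson just before the statement, and your argument is the standard textbook proof.
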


\begin{definition}\label{def:nearest-frobenius}
    The Frobenius-nearest preimage of $Q \in \so_n$ to $S \in \skewm_n$ is
    \begin{equation}\label{eq:frobenius-nearest}
        \flog_S(Q) := \underset{X\in\exp^{-1}(Q)}{\arg\min}\|X-S\|_{\F}.
    \end{equation} 
    The notion of nearest preimage is defined over the entire set of preimages $\exp^{-1}(Q)$ and does not depend on the diffeomorphism structure. Thus, the feasible set of $X$ is not restricted to a prescribed D-component $\mathcal{C}_e$.
\end{definition}

If a Schur basis $R\in \orth_n$ of $X\in\flog_S(Q)$ is provided, then $X$ can be determined by a simple rounding-to-the-nearest process as presented in \cref{prop:frobenius-shift-with-schur}.

\begin{proposition}\label{prop:frobenius-shift-with-schur}
    If $X = \sum_{i=1}^m R_{[i]}\sbmatrix{0 & -\theta_i-2\pi x_i\\\theta_i+2\pi x_i & 0} R_{[i]}^{\T}\in \flog_S(Q)$, the angular shift $\xi = (x_1,\ldots, x_k)$ is given by
    \begin{equation}\label{eq:frobenius-shift}
        x_i \in \mathrm{RTN}\Big(\frac{R_{2i}^{\T}S R_{2i-1}-\theta_i}{2\pi}\Big), \text{ $i\leq m$; the auxiliary $x_k = 0$ when $n=2k-1$.}
    \end{equation} 
    Here, $\mathrm{RTN}$ refers to rounding to the nearest integer(s).
\end{proposition}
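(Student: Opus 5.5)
The plan is to reduce the Frobenius distance to a sum of decoupled scalar terms, one per Schur block, using the basis $R$ of $X$. This mirrors the computation in the proof of \cref{prop:nearest-connected-preimage}. The key observation is that the statement is conditional: $R$ is assumed to be a Schur basis of the minimizer $X\in\flog_S(Q)$. So every competitor of the form
\[
X_\eta=\textstyle\outerprodshift{R}{\theta}{y},\qquad \eta=(y_1,\ldots,y_k)\in\mathbb{Z}^k \text{ (with } y_k=0 \text{ when } n=2k-1),
\]
is also a preimage of $Q$. Indeed, $\exp(X_\eta)=R\exp(\Theta)R^{\T}=Q$, since shifting an angle by a multiple of $2\pi$ does not change the corresponding $2\times 2$ rotation block (see~\eqref{eq:shifted-angles-22}). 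Because $X$ minimizes $\|\cdot-S\|_{\F}$ over all of $\exp^{-1}(Q)$, it in particular minimizes over the subfamily $\{X_\eta:\eta\in\mathbb{Z}^k\}$, and $X=X_\xi$ belongs to this subfamily.

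Next I will compute $\|X_\eta-S\|_{\F}^2$ using the orthogonal invariance of the Frobenius norm, namely $\|X_\eta-S\|_{\F}=\|R^{\T}X_\eta R-R^{\T}SR\|_{\F}$. Here $R^{\T}X_\eta R$ is block diagonal with blocks $\sbmatrix{0&-\alpha_i\\\alpha_i&0}$, where $\alpha_i=\theta_i+2\pi y_i$. The diagonal blocks of $R^{\T}SR$ are $\sbmatrix{0&-s_i\\s_i&0}$ with $s_i:=R_{2i}^{\T}SR_{2i-1}$, by skew-symmetry of $S$. When $n$ is odd, the $1\times1$ block of $R^{\T}SR$ is zero. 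This gives
\[
\|X_\eta-S\|_{\F}^2=\sum_{i\neq j}\|(R^{\T}SR)_{[i,j]}\|_{\F}^2+2\sum_{i=1}^m\big(\theta_i+2\pi y_i-s_i\big)^2 .
\]
The off-diagonal part does not depend on $\eta$. The remaining sum is separable in the $y_i$, so minimizing over $\eta\in\mathbb{Z}^m$ reduces to minimizing each $|2\pi y_i-(s_i-\theta_i)|$ independently over $y_i\in\mathbb{Z}$. The minimizers of each such term are exactly $\mathrm{RTN}\big((s_i-\theta_i)/(2\pi)\big)$; there are two of them precisely when the argument is a half-integer, which is why the statement uses $\in$. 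Since $\xi$ attains the minimum, each $x_i$ must be one of these minimizers, and this is \eqref{eq:frobenius-shift}. The auxiliary $x_k=0$ holds by convention.

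I expect no serious obstacle, because the hypothesis that $R$ is a Schur basis of the minimizer removes the global difficulty: the Schur bases of other preimages can differ when $\exp^{-1}(Q)\cap\fraks\neq\emptyset$, as in the example after \cref{coro:dpreimage-shifted-angle}. The only point to state carefully is that restricting to same-basis competitors is legitimate. It is legitimate because we are deriving a necessary condition that the true minimizer $\xi$ must satisfy, not claiming that rounding produces a global minimizer for an arbitrary basis $R$ of $Q$.
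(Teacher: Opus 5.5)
Your proof is correct and follows the paper's argument: conjugate by $R$, split $\|R^{\T}XR-R^{\T}SR\|_{\F}^2$ into a shift-independent off-diagonal part plus $2\sum_{i}(\theta_i+2\pi x_i-R_{2i}^{\T}SR_{2i-1})^2$, and minimize each term separately by rounding. Your explicit remark that every same-basis shift $X_\eta$ is also a preimage justifies this more carefully than the paper does. The minimizer must in particular minimize over that subfamily, which is exactly the direction needed in the step the paper states as an ``if and only if''.
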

\begin{proof}
    Notice that $\|X - S\|_{\F} = \|R^{\T}XR - R^{\T}SR\|_{\F} $ where $R^{\T}XR$ is block diagonal with $\sbmatrix{0 & -\theta_i -2\pi x_i\\ \theta_i + 2\pi x_i & 0}, i\leq m$ on the diagonal, see~\cref{def:angles-and-shifts}. Denote $S' = R^{\T}SR$ and the diagonal block $S'_{[i,i]}, i\leq m$ is given by 
    \begin{equation*}
        S'_{[i,i]} = R_{[i]}^{\T}S R_{[i]} = \sbmatrix{R_{2i-1}^{\T}\\R_{2i}^{\T}}S\sbmatrix{R_{2i-1} &R_{2i}} = \sbmatrix{R_{2i-1}^{\T}SR_{2i-1} & R_{2i-1}^{\T}SR_{2i}\\ R_{2i}^{\T}SR_{2i-1} & R_{2i}^{\T}SR_{2i}}
    \end{equation*}
    where $R_{2i-1}^{\T}SR_{2i} = -R_{2i}^{\T}SR_{2i-1}$ and the zeros on the diagonal of $S'$ follow from the skew-symmetry of $S'$, and when $n = 2k - 1$, $(R^{\T}XR)_{[k,k]} = S'_{[k,k]} = \sbmatrix{0}$. The following equalities then hold
    \begin{equation*}
        \begin{aligned}
            \|R^{\T}XR-S'\|_{\F}^2 &=\textstyle \sum_{i\neq j}^k \|S'_{[i,j]}\|_{\F}^2 + \sum_{i=1}^m \Big\|\sbmatrix{0 & -\theta_i-2\pi x_i\\\theta_i+2\pi x_i}- S'_{[i,i]}\Big\|_{\F}^2\\
            &= \textstyle \sum_{i\neq j}^k \|S'_{[i,j]}\|_{\F}^2 + \sum_{i=1}^m 2(\theta_i +2\pi x_i - R_{2i}^{\T}SR_{2i-1})^2.
        \end{aligned}
    \end{equation*}
    Therefore, $X$ minimizes~\eqref{eq:frobenius-nearest} if and only if $\xi$ minimizes
    \begin{equation*}
        \underset{x_1,\ldots, x_m\in\mathbb{Z}}{\argmin} \textstyle \sum_{i=1}^m |\theta_i + 2\pi x_i - R_{2i}^{\T}SR_{2i-1}|,
    \end{equation*}
    which admits the closed-form solution(s) given by rounding to the nearest integer(s): $x_i\in \mathrm{RTN}\left(\left(R_{2i}^{\T}S R_{2i-1}-\theta_i\right)\big/ 2\pi\right)$ for $i\leq m$.
\end{proof}

\begin{remark}
    If $X\in\flog_S(Q)\cap (\skewm_n\setminus\fraks)$, then any Schur basis $R$ of $Q$ determines $X$ with the shift(s) solved from~\eqref{eq:frobenius-shift}, because any Schur basis of $Q$ is also a Schur basis of its D-preimage, see~\cref{prop:shared-basis}. 
\end{remark}

Notice that~\eqref{eq:frobenius-shift} identifies the shift(s) such that $\theta_i+2\pi x_i$ that is (are) within $\pi$ interval of $R_{2i}^{\T}S R_{2i-1}$, suggesting that the matrix max-norm of the difference $\|R^{\T}XR - S'\|$ is given by either $\max_{i\neq j}\{\|S'_{[i,j]}\|_{\max}\}$ or $\max_{i\leq m}\{|\theta_i+2\pi x_i - R_{2i}^{\T}S R_{2i-1}|\}$, where the latter is bounded by $\pi$. This observation leads to the the next proposition relating the nearest Frobenius preimage and the nearest D-preimage in matrix $2$-norm.

\begin{proposition}\label{prop:flog-to-nlog}
    Consider $Q\in\so_n$ and $S\in\skewm_n$. If $X$ is a D-preimage of $Q$ satisfying $\|X-S\|_2 < \pi$, then $X$ is the unique solution of~\eqref{eq:frobenius-nearest}.
\end{proposition}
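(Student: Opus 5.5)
The plan is to compare $X$ with an arbitrary other preimage $Y\in\exp^{-1}(Q)$, $Y\neq X$, in a basis that block-diagonalizes \emph{both} matrices simultaneously. Then $\|Y-S\|_{\F}^2-\|X-S\|_{\F}^2$ reduces to a sum of scalar terms over the diagonal $2\times 2$ blocks, exactly as in the proof of \cref{prop:frobenius-shift-with-schur}. The hypothesis $\|X-S\|_2<\pi$ is what makes each such term favor $X$, by \cref{lemma:matrix-norm-bound}.

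\textbf{Common basis.} Let $Y\in\exp^{-1}(Q)$ with $Y\neq X$; $Y$ may lie in $\fraks$. Take a Schur basis $R$ of $Y$ in the form~\eqref{eq:block-diagonal-normal}. Since $Q=\exp(Y)=R\exp(R^{\T}YR)R^{\T}$ is then a Schur decomposition, $R$ is also a Schur basis of $Q$. Because $X$ is a D-preimage, \cref{prop:shared-basis} (equivalently \cref{coro:dpreimage-shifted-angle}) shows that $R$ is a Schur basis of $X$ too. Let $\theta$ be the principal angles of $Q$ under $R$. By~\eqref{eq:shifted-angles-22} there are $\xi,\eta\in\mathbb{Z}^k$ such that $X$ has angles $\theta+2\pi\xi$ and $Y$ has angles $\theta+2\pi\eta$ under the same $R$. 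Since $X\neq Y$, we have $\xi\neq\eta$.

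\textbf{Frobenius expansion.} Set $S'=R^{\T}SR$ and $s_i=R_{2i}^{\T}SR_{2i-1}$. The computation in the proof of \cref{prop:frobenius-shift-with-schur} gives
\begin{equation*}
\|Y-S\|_{\F}^2-\|X-S\|_{\F}^2=\textstyle\sum_{i=1}^m 2\big[(\theta_i+2\pi y_i-s_i)^2-(\theta_i+2\pi x_i-s_i)^2\big],
\end{equation*}
because the off-diagonal blocks $S'_{[i,j]}$ contribute identically to both distances. The entry $\theta_i+2\pi x_i-s_i$ is, up to sign, an entry of $R^{\T}(X-S)R$. By \cref{lemma:matrix-norm-bound} and orthogonal invariance of the $2$-norm,
\begin{equation*}
|\theta_i+2\pi x_i-s_i|\le\|R^{\T}(X-S)R\|_{\max}\le\|X-S\|_2<\pi .
\end{equation*}

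\textbf{Conclusion.} Write $a_i=\theta_i+2\pi x_i-s_i$, so $|a_i|<\pi$. Then for every integer $l\neq 0$ we have $|a_i+2\pi l|\ge 2\pi-|a_i|>\pi>|a_i|$. Hence every summand above is nonnegative, and it is strictly positive whenever $y_i\neq x_i$. Since $\xi\neq\eta$, at least one summand is strictly positive, giving $\|Y-S\|_{\F}>\|X-S\|_{\F}$. As $Y$ was an arbitrary other preimage, $X$ is the unique minimizer in~\eqref{eq:frobenius-nearest}.

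\textbf{Main obstacle.} The delicate step is the common basis. A competitor $Y\in\fraks$ need not share an arbitrary Schur basis of $Q$, as the example after \cref{coro:dpreimage-shifted-angle} shows. The fix is to start from a Schur basis of $Y$ and transfer it to $X$ through $Q$, which is legitimate only because $X$ is a D-preimage. Once the basis is common, the rest is the scalar rounding argument.
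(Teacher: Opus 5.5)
Your proof is correct and follows the paper's route. Both arguments use a Schur basis shared by $Q$ and the D-preimage $X$ (via \cref{prop:shared-basis}), the bound $|R_{2i}^{\T}SR_{2i-1}-\alpha_i|\le\|X-S\|_2<\pi$ from \cref{lemma:matrix-norm-bound}, and the rounding argument of \cref{prop:frobenius-shift-with-schur}. Your version is more explicit than the paper's: you spell out the step the paper only gestures at, namely starting from a Schur basis of an arbitrary competitor $Y$ (possibly in $\fraks$) and transferring it through $Q$ to $X$. Your direct strict comparison $\|Y-S\|_{\F}>\|X-S\|_{\F}$ also removes the need to argue separately that a minimizer exists.
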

\begin{proof}
    Suppose $X$ is a D-preimage that satisfies $\|S-X\|_2 < \pi$. Let $R$ be a Schur basis of $Q$ and $X = \outerprodskew{R}{\alpha} := RAR^{\T}$, see~\cref{def:angles-and-shifts} for notation convention, the bound in~\eqref{eq:matrix-norm-bound} yields 
    \begin{equation*}
        \pi > \|S-X\|_2 = \|R^{\T}SR - A\|_2\geq \|R^{\T}SR - A\|_{\max} \geq \max_{i\leq m}|R_{2i}^{\T}SR_{2i-1} - \alpha_i|.
    \end{equation*}
    The bounds on $\pi > |R_{2i}^{\T}SR_{2i-1} - \alpha_i|$ for all $i\leq m$ yield the same shift in~\eqref{eq:frobenius-shift} from the RTN. Moreover, the strict bound on $\pi$ implies that the shift is unique regardless of the choice of Schur basis $R$. Finally, notice that the Schur basis $R$ is arbitrarily chosen from $Q$, while any Schur basis of the preimage of $Q$ remains a Schur basis of $Q$. Thus $X$ is the minimizer of~\eqref{eq:frobenius-nearest} by a unique shift.
\end{proof}

Note that~\cref{prop:flog-to-nlog} does not apply to the non-D-preimages, because the Schur basis of $Q$ is not necessarily a Schur basis of its non-D-preimages.

\begin{corollary}\label{coro:nlog-necessary}
    If the nearby matrix logarithm, see~\cref{def:nearby-logarithm}, $\nlog_S(Q)$ of $Q$ around $S\in\mathcal{C}_e$ exists, i.e., $\nlog_S(Q)\in \mathcal{C}_e\cap \{X:\|X-S\|_2<\pi\}\cap \exp^{-1}(Q)$, it is the unique solution of~\eqref{eq:frobenius-nearest}.
\end{corollary}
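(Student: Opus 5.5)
The plan is to reduce the statement directly to \cref{prop:flog-to-nlog}: I will check that its hypotheses hold for $X:=\nlog_S(Q)$, and then deal with the fact that the corollary specifies the domain $\mathcal{C}_e$.

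First, I unpack \cref{def:nearby-logarithm}. If $\nlog_S(Q)$ exists, then $X:=\nlog_S(Q)\in\mathcal{N}_S$. This gives three facts. First, $X\in\skewm_n\setminus\fraks$. Second, $X$ is D-connected to $S$, so $X\in\mathcal{C}_e$, because D-components are maximal path-connected subsets of $\skewm_n\setminus\fraks$. Third, $\|X-S\|_2<\pi$. Moreover, $\exp(X)=Q$ by construction. Together these facts say that $X$ is a D-preimage of $Q$, i.e., $X\in\exp^{-1}(Q)\cap(\skewm_n\setminus\fraks)$, and that it satisfies the $2$-norm bound, which confirms the membership stated in the corollary.

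Second, I apply \cref{prop:flog-to-nlog} with this $X$ and the given $S$. Its hypotheses are that $Q\in\so_n$, that $S\in\skewm_n$, and that $X$ is a D-preimage of $Q$ with $\|X-S\|_2<\pi$. All of these hold by the first step. Hence $X$ is the unique minimizer of $\|X'-S\|_{\F}$ over $X'\in\exp^{-1}(Q)$, i.e., $\flog_S(Q)=\{X\}$, which is exactly the claim.

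The only point requiring care is that the corollary is phrased relative to a D-component $\mathcal{C}_e$ containing $S$, whereas \cref{prop:flog-to-nlog} does not use $\mathcal{C}_e$ at all. I will remark that membership in $\mathcal{C}_e$ serves only to guarantee that $X$ lies outside $\fraks$. This is essential, because the proof of \cref{prop:flog-to-nlog} relies on \cref{prop:shared-basis} to pass Schur bases of $Q$ to $X$. No separate obstacle is expected, since the argument is a direct specialization.
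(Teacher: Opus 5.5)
Your proposal is correct and follows the same approach as the paper. The paper's proof is the one-line observation that $\nlog_S(Q)$ is a D-preimage of $Q$ with $\|\nlog_S(Q)-S\|_2<\pi$, after which \cref{prop:flog-to-nlog} applies; your unpacking of \cref{def:nearby-logarithm} ($X\in\skewm_n\setminus\fraks$, D-connectivity to $S$, $\exp(X)=Q$) just makes those hypotheses explicit.
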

\begin{proof}
    Since $\nlog_S(Q)$ is a D-preimage and $\|\nlog_S(Q)-S\|_2 < \pi$, \cref{coro:nlog-necessary} follows.
\end{proof}

\begin{figure}
    \centering
    \logVenn[0.8]
    \caption{Venn Diagram of a Preimage $X$ of $Q$ with $S\in\skewm_n\setminus\fraks$}\label{fig:venn}
\end{figure}

\begin{remark}\label{remark:venn}
    When the diffeomorphic logarithm adopts the matrix Frobenius norm as selection rules over two preimages in $\mathcal{C}_*$, \cref{prop:flog-to-nlog} and \cref{coro:nlog-necessary} yield the Venn diagram~\cref{fig:venn}. Indeed, when (i) $S\notin\mathcal{C}_*$, the D-connectivity from $X$ to $S$ is the only criterion of whether $X$ is the diffeomorphic logarithm, whether or not $X$ is a nearest Frobenius preimage. When $S\in\mathcal{C}_*$ and $X$ is D-connected to $S$ (i.e., $X\in\mathcal{C}_*$), (ii) if $\|X-S\|_2<\pi$, then $X$ is the nearby logarithm, the nearest Frobenius preimage and the diffeomorphic logarithm; (iii) if $\|X-S\|_2>\pi$ but it remains the nearest Frobenius preimage, then $X$ is still the diffeomorphic logarithm; (iv) if $X$ is not a nearest Frobenius preimage, then $X$ is not the nearby logarithm. 
\end{remark}

The Venn diagram~\cref{fig:venn} is intended as an illustration of logical relationships, rather than a partition where every region is necessarily nonempty. For example, when $S\notin\mathcal{C}_*$, the ``diffeomorphic logarithm'' box coincides with the ``D-connected'' box. We keep the conditions represented as distinct boxes because they genuinely define different subsets.

\subsection{Algorithms and Computations}\label{subsec:algorithms}

With the D-components labeled by the $2\pi$-shifts of the canonical angles, computing D-preimages reduces to computing a canonical Schur decomposition of $Q\in\so_n$ together with determining the shift $\xi\in\mathbb{Z}^k$. While Schur or eigenvalue decompositions for general matrices are well understood and highly optimized, the special orthogonal structure can be further exploited to accelerate the computation.

Mataigne and Gallivan~\cite{mataigne2024eigenvalue} propose an efficient Schur decomposition algorithm for normal matrices $\{X\in\realmat{n}:X^{\T}X = XX^{\T}\}$, a class that includes $\so_n$. The algorithm requires $14/3\,n^3$ operations, of which $2n^3$ are Level-3 \texttt{BLAS} operations. Consider a normal matrix in the block-diagonal real Schur form \eqref{eq:block-diagonal-normal}:
\begin{equation*}
    M = RTR^{\T} = \sum_{i=1}^k R_{[i]}T_{[i,i]}R_{[i]}^{\T},\, \text{where}\,\begin{cases}
        T_{[i,j]} = \zerov, & i\neq j,\\
        T_{[i,i]} = \sbmatrix{r_i & -s_i\\ s_i & r_i}, & i\leq m,\\
        T_{[k,k]} = r_k, & k = m+1,
    \end{cases}
\end{equation*}
Here, $R$ and $T$ are the factors to be computed. Algorithm~3.1 (Step 1--3) of~\cite{mataigne2024eigenvalue} first computes the Schur basis $R$ of the skew-symmetric part of the normal matrix,
\begin{equation*}
     \textstyle \mathrm{skew}(M) = R\,\mathrm{skew}(T)R^{\T} = \sum_{i=1}^m R_{[i]}\sbmatrix{0 & -s_i\\ s_i & 0} R_{[i]}^{\T},
\end{equation*}
and then determines the real coefficients $r_i$, $i\leq k$, in subsequent steps of the algorithm.

For skew-symmetric matrices, it follows immediately that $r_i=0$. For special orthogonal matrices, the relation $r_i^2+s_i^2=1$ implies $|r_i|=\sqrt{1-s_i^2}$ for all $i\leq m$, so it remains to determine the sign of $r_i$. Writing $R_{[i]}=\sbmatrix{R_{2i-1} & R_{2i}}$ for $i\leq m$, we have
\begin{equation*}
    QR_{[i]} = R_{[i]}E_{[i,i]} = \sbmatrix{R_{2i-1} & R_{2i}} \sbmatrix{r_i & -s_i\\s_i & r_i} = \left[{\footnotesize
    \begin{array}{c|c}
    r_i R_{2i-1} + s_i R_{2i} & -s_iR_{2i-1} + r_iR_{2i}
    \end{array}
    }\right].
\end{equation*} 
Hence, the sign\footnote{Comparing the first nonzero entries between $r_i R_{2i-1}$ and $QR_{2i-1}-s_iR_{2i}$ is often used as an efficient alternative for the sign.} of $r_i$ is determined from $r_i R_{2i-1}=QR_{2i-1}-s_iR_{2i}$, namely,
\begin{equation*}
    \mathrm{sign}(r_i) = \frac{R_{2i-1}^{\T}(QR_{2i-1} - s_iR_{2i})}{\|R_{2i-1}\|\|QR_{2i-1} - s_iR_{2i}\|}
\end{equation*}
where $\|\cdot\|$ denotes the Euclidean norm. As a computationally cheaper but slightly less robust alternative, $\mathrm{sign}(r_i)$ may also be obtained from the first nonzero components of $R_{2i-1}$ and $QR_{2i-1}-s_iR_{2i}=r_iR_{2i-1}$, thereby avoiding the $O(n)$ operations required to evaluate inner products and norms.

A specialized Schur decomposition for skew-symmetric or special orthogonal matrices is given in \cref{alg:so-schur}. The lines between \ref{soschur-alg:skew-start} and \ref{soschur-alg:skew-end} are adapted from~\cite[Algorithm~3.1 (steps~1--3)]{mataigne2024eigenvalue}. In that procedure, the determinant of the resulting Schur basis $R$ is not determined and is not exploited in the original implementation. However, this determinant is essential for constructing a canonical Schur decomposition in~\cref{prop:existence-canonical-decomp}, and computing it from scratch would require $O(n^3)$ operations. Therefore, \cref{alg:so-schur} introduces the formulae in line~\ref{soschur-alg:determinant-b} and line~\ref{soschur-alg:determinant-c}, which are justified in~\cref{prop:schur-basis-determinant}.

\begin{algorithm}
    \caption{Schur decomposition of a skew-symmetric or special orthogonal matrix}\label{alg:so-schur}
    \begin{algorithmic}[1]
        \REQUIRE Skew-symmetric or special orthogonal $M\in\so_n$.
        \ENSURE $M = RAR^{\T}\in\skewm_n$ with $A_{[i,i]} = \sbmatrix{0 & -\alpha_i\\ \alpha_i & 0}$ for $i\leq m$, or \\$M = R\exp(\Theta)R^{\T}\in\so_n$ with $\Theta_{[i,i]} = \sbmatrix{0 & -\theta_i\\ \theta_i & 0}, \theta_i\in \prng$ for $i\leq m$.
        \STATE\label{soschur-alg:skew-start} $S \gets (M - M^{\T})/2$
        \STATE (Hessenberg transformation) $S = H\sbmatrix{0 & -a_1 & 0 & \cdots\\ a_1 & 0 & -a_2 & \cdots\\ 0 & a_2 & 0 & \cdots\\\vdots & \vdots &\vdots &\ddots}H^{\T}$\label{soschur-alg:hessenberg}  \codecomment{Level 3 \texttt{BLAS}}
        \STATE (Permutation $P$) $S = HP \sbmatrix{\zerov_{k\times k} & -B^{\T}\\ B & \zerov_{m\times m}}P^{\T}H^{\T}$, where $B$ is $m\times k$ bidiagonal in the forms of
        \begin{equation*}
            \sbmatrix{a_1 & -a_2 & \cdots & 0 & 0\\ 0 & a_3 & \cdots & 0 & 0\\ \vdots &\vdots &\ddots &\vdots &\vdots\\ 0 & 0 &\cdots & a_{2m-3} & -a_{2m-2}\\ 0 & 0 &\cdots & 0 & a_{2m-1}}\, \text{or}\, \sbmatrix{a_1 & -a_2 & \cdots & 0 & 0 & 0\\ 0 & a_3 & \cdots & 0 & 0 & 0\\ \vdots &\vdots &\ddots &\vdots &\vdots &\vdots \\ 0 & 0 &\cdots & a_{2m-3} & -a_{2m-2} & 0\\ 0 & 0 &\cdots & 0 & a_{2m-1} & -a_{2m}}
        \end{equation*}
        \IF{$n = 2m$}
            \STATE (Singular Value Decomposition) $B = U\Sigma V^{\T}$
            \STATE $R\gets HP\sbmatrix{U &\zerov\\\zerov & V}P^{\T}$ and $(s_1,\ldots, s_m)\gets$ the diagonals in $\Sigma$ 
        \ELSIF{$ n =2m+1$}
            \STATE (Givens Elimination $G$) $B = G\sbmatrix{C & \zerov_{m\times 1}}$ where $C$ is $m\times m$ bidiagonal
            \STATE (Singular Value Decomposition) $C = U\Sigma V^{\T}$
            \STATE $R\gets HP\sbmatrix{U &\zerov&\zerov\\\zerov & 1 &\zerov \\\zerov &\zerov& GV}P^{\T}$ and $(s_1,\ldots, s_m)\gets$ the diagonals in $\Sigma$
            \STATE $s_k\gets 0$ and $r_k\gets 1$.
        \ENDIF\label{soschur-alg:skew-end}
        \IF{$n=2m$}
            \STATE $\det(R)\gets (-1)^n \prod_{i=1}^m \mathrm{sign}(B_{i,i})$\label{soschur-alg:determinant-b}
        \ELSE
            \STATE $\det(R)\gets (-1)^n \prod_{i=1}^m \mathrm{sign}(C_{i,i})$\label{soschur-alg:determinant-c}
        \ENDIF 
        \IF{$M\in\skewm_n$}
            \STATE Return $R$, $\det(R)$ and $\alpha = (s_1,\ldots, s_k)$.
        \ELSE
            \FOR{$i = 1,\ldots, m$}\label{soschur-alg:sign-start}
                \STATE $u\gets R_{2i-1}$ and $v\gets QR_{2i-1} - s_iR_{2i}$
                \STATE $r_i \gets \frac{u^{\T}v}{\|u\|\|v\|}\sqrt{1-s_i^2}$\codecomment{Get the sign of $r_i$ from $v = r_i u$}
                \STATE $\theta_i\gets \arctan(s_i, c_i)\in \prng$
            \ENDFOR
            \RETURN \label{soschur-alg:sign-end} $R$, $\det(R)$ and $\theta = (\theta_1,\ldots, \theta_k)$.
        \ENDIF
    \end{algorithmic}
\end{algorithm}

\begin{proposition}\label{prop:schur-basis-determinant}
    Let $R\in\orth_n$ be obtained from~\cref{alg:so-schur} along with the intermediate matrices: 
    \begin{equation}
        H, P\in \orth_n, U, V, G\in\orth_m\; \text{and}\; \text{and the $m\times m$ bidiagonal $B$ or $C$.}
    \end{equation}
    If the diagonals of $B$ or $C$ are nonzeros, then
    \begin{equation}\label{eq:schur-basis-determinant}
        \det(R) = \begin{cases}
        (-1)^n\prod_{i=1}^m \mathrm{sign}(B_{i,i}) & n = 2m,\\
        (-1)^n\prod_{i=1}^m \mathrm{sign}(C_{i,i}) & n = 2m+1.
    \end{cases}
    \end{equation}
\end{proposition}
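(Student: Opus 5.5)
The plan is to compute $\det(R)$ directly from the factored form produced by \cref{alg:so-schur} and then track the sign of each factor. In the even case $n=2m$ we have $R = HP\,\mathrm{diag}(U,V)\,P^{\T}$, so $\det(R)=\det(H)\det(P)^2\det(U)\det(V)=\det(H)\det(U)\det(V)$; the permutation $P$ drops out. In the odd case $n=2m+1$ the factor $\mathrm{diag}(U,1,GV)$ gives $\det(R)=\det(H)\det(U)\det(G)\det(V)$. The claimed formula \eqref{eq:schur-basis-determinant} therefore reduces to three separate facts:
\begin{enumerate}[(a)]
\item $\det(H)=(-1)^n$;
\item $\det(G)=1$;
\item $\det(U)\det(V)=\prod_{i=1}^m\mathrm{sign}(B_{i,i})$, or the same statement with $C$ in place of $B$.
\end{enumerate}

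For (c), the bidiagonal factor is $m\times m$ and triangular. When $n=2m$, this is $B$; when $n=2m+1$, it is $C$, since $B=G\sbmatrix{C & \zerov}$. Its determinant is therefore the product of its diagonal entries. The hypothesis that these diagonals are nonzero makes the matrix nonsingular, so every singular value in $\Sigma$ is strictly positive and $\det(\Sigma)>0$. Taking determinants in $B=U\Sigma V^{\T}$ (or $C=U\Sigma V^{\T}$) gives
\[
\prod_{i=1}^m B_{i,i}=\det(U)\det(\Sigma)\det(V).
\]
Dividing out the positive factor $\det(\Sigma)$ and using $\det(U)\det(V)\in\{\pm1\}$, the sign of the right-hand side is exactly $\det(U)\det(V)$, which gives (c). I will also check that the signs $-a_{2j}$ in the superdiagonal of $B$ play no role, since only the diagonal enters a triangular determinant.

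For (b), $G$ is built as a product of Givens rotations, each of which lies in $\so_m$. Hence $\det(G)=1$. If the elimination were implemented with reflections instead, I would count them and absorb the resulting sign in the same way as in (a).

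For (a), the Hessenberg reduction of an $n\times n$ matrix accumulates $n-2$ Householder reflectors, and each has determinant $-1$. This gives $\det(H)=(-1)^{n-2}=(-1)^n$. I expect this to be the main obstacle, because the count depends on implementation. In practice (for example LAPACK \texttt{gehrd}/\texttt{orghr}) a reflector with $\tau=0$ is the identity, not a reflection, and this happens whenever the relevant subcolumn is already in the desired form. My first step will be to state explicitly that every step of the reduction applies a genuine reflector. The cleanest alternative is to fix the reflector convention so that $\tau\neq0$ always; then the count is exactly $n-2$. Failing that, I would modify the formula by a factor $(-1)^{\#\{\tau_j=0\}}$, which is available at $O(n)$ cost. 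Multiplying (a), (b) and (c) then yields \eqref{eq:schur-basis-determinant} in both parity cases.
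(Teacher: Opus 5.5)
Your proposal is correct and follows the paper's proof essentially step by step: the permutation cancels since $\det(P)^2=1$, $\det(G)=1$ because $G$ is a product of Givens rotations, $\det(H)=(-1)^{n-2}$ from counting Householder reflectors, and $\det(U)\det(V)=\det(B)/\det(\Sigma)=\prod_{i}\mathrm{sign}(B_{i,i})$ (or the same with $C$) because the bidiagonal factor is triangular and nonsingular. Your remark that a reflector with $\tau=0$ is the identity rather than a reflection is a legitimate refinement: the paper tacitly assumes that all $n-2$ Householder steps are genuine reflections, so stating that convention explicitly, or correcting by the factor $(-1)^{\#\{j:\tau_j=0\}}$, makes precise a point the paper leaves implicit.
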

\begin{proof}
    The Hessenberg transformation $H$ consists of $n-2$ elementary Householder reflectors, which yields $\det(H) = (-1)^{n-2} = (-1)^{n}$. The Givens elimination matrix $G$ consists of elementary Givens rotations is in $\so_m$, i.e., $\det(G) = 1$, as each elementary Givens rotation is in $\so_m$. Consider a singular value decomposition $U\Sigma V^{\T} = X\in\realmat{m}$ with nonzero diagonals $X_{i,i}\neq 0$ for $i\leq m$ in $X$. The nonzero diagonals yields $\det(X)\neq 0$ and $\det(\Sigma) = |\det(X)| > 0$. Consequently,
    \begin{equation*}
        \det(U)\det(V) = \det(X)/\det(\Sigma) = \mathrm{sign}(\det(X)) = \textstyle \prod_{i=1}^m \mathrm{sign}(X_{i,i}),
    \end{equation*}
    while $\det(P)\det(P^{\T}) = \det(P)^2$. Thus $\det(R)$ is reduced to $\det(H)\det(U)\det(V^{\T})$ where 
    \begin{equation*}
        \begin{cases}
            \det(R) = \det(H)\,\mathrm{sign}(\det(B))& n = 2m,\\
            \det(R) = \det(H)\,\mathrm{sign}(\det(C)) & n = 2m+1.\\
        \end{cases}
    \end{equation*}
    which yields~\eqref{eq:schur-basis-determinant}.
\end{proof}


In view of~\cref{prop:existence-canonical-decomp}, for any Schur basis $R\in\orth_n$ and angle vector $\alpha\in\realset^k$ (the auxiliary $\alpha_k = 0$ when $n = 2k-1$), the canonical Schur decomposition of $Q = V\exp(\Theta)V^{\T}$ can be obtained by shuffling and
sign-flipping the columns of $R$, as described in~\cref{alg:canonical-alignment}. The determinant computation in line~\ref{canschur-alg:determinant} of \cref{alg:canonical-alignment} requires $O(n^3)$ operations in general. This cost can be avoided when the Schur basis is computed via \cref{alg:so-schur}.

\begin{algorithm}
    \caption{Canonical alignment of angles}\label{alg:canonical-alignment}
    \begin{algorithmic}[1]
        \REQUIRE $R\in\orth_n$, (optional $\det(R)$) and $\alpha\in\realset^k$ (the auxiliary $\alpha_k = 0$ when $n = 2k-1$). 
        \ENSURE $VBV^{\T} = RAR^{\T}$ with $\beta = \sigma + 2\pi \eta$ where $V\in\so_n$ and $\sigma$ satisfies~\eqref{eq:canonical-angles}. 
        \FOR{$i = 1,\ldots, m$}
            \STATE $x_i\gets \lfloor (\alpha_i + \pi) \big/ 2\pi\rfloor$ \codecomment{$\lfloor\,\cdot\, \rfloor$ : floor operator}
            \STATE $\theta_i\gets \alpha_i - 2\pi x_i$
        \ENDFOR
        \STATE $(\sigma, \eta, V) \gets (\theta, \xi, R)$
        \IF{$\det(R) = -1$}\label{canschur-alg:determinant}
            \STATE $(\sigma_1, y_1, V_{[1]}) \gets (-\sigma_1, -y_1, \sbmatrix{V_2 & V_1})$
        \ENDIF
        \STATE (Sort by magnitude $\rho$) $|\sigma_{\rho(i)}|\geq |\sigma_{\rho(i+1)}|$ (the auxiliary $\rho(k) = k$ for $n = 2k-1$).
        \STATE $(\sigma, \eta, V)\gets \left((\sigma_{\rho(1)},\ldots, \sigma_{\rho(k)}), (y_{\rho(1)},\ldots, y_{\rho(k)}), \sbmatrix{V_{[\rho(1)]} &\cdots & V_{[\rho(k)]}}\right)$
        \FOR{$i = 1,\ldots, m-1$}
            \IF{$\sigma_i < 0$}
                \STATE $(\sigma_i, y_i, V_{[i]})\gets (-\sigma_i, -y_i, \sbmatrix{V_{2i} & V_{2i-1}})$
                \STATE $(\sigma_{i+1}, y_{i+1}, V_{[i+1]})\gets (-\sigma_{i+1}, -y_{i+1}, \sbmatrix{V_{2i+2} & V_{2i+1}})$
            \ENDIF
        \ENDFOR
        \IF{$n = 2m+1$ and $\sigma_m < 0$}
            \STATE $(\sigma_m, y_m, V_{[m]})\gets (-\sigma_m, -y_m, \sbmatrix{V_{2m} & V_{2m-1}})$
            \STATE $V_{[m+1]}\gets -V_{[m+1]}$ \codecomment{Note that $V_{[m+1]} = V_n$}
        \ENDIF
        \RETURN $\sigma\in\prng^k$ subject to~\eqref{eq:canonical-angles}, $\eta\in\mathbb{Z}^k$ and $V\in\so_n$.
    \end{algorithmic}
\end{algorithm}

The computation of the diffeomorphic logarithm is summarized in \cref{alg:diffeomorphic-logarithm}. First, a specialized Schur decomposition (\cref{alg:so-schur}) is applied to the reference point $S\in\skewm_n$, and the canonical alignment procedure (\cref{alg:canonical-alignment}) determines the canonical shift $\xi$ that locates the D-component containing $S$. A second Schur decomposition and canonical alignment are then performed on $Q\in\so_n$ to identify a preimage $X$ lying in $\mathcal{C}_{\xi}$ or in its closure. 

In line~\ref{dlog-alg:frobenius-check}, the Frobenius norm is used to select among candidate preimages in the special component $\mathcal{C}_*$; this criterion may be replaced by the matrix $2$-norm at higher computational cost. Line~\ref{dlog-alg:2pi-check} provides an optional distance check that determines whether the resulting diffeomorphic logarithm coincides with the nearby logarithm. Also note that the rank-deficiency check in line~\ref{dlog-alg:conjugate-check} involves with examining exactly canceled floating point numbers: $|\sigma_i\pm\sigma_j|= 0$, which is typically implemented by $|\sigma_i+\sigma_j|\leq \varepsilon$ with sufficiently small $\varepsilon$. For the integer type $y_i\pm y_j = 0$, the exact integer arithmetic is enough for the verification.

\begin{algorithm}
    \caption{Diffeomorphic Logarithm of $Q$}\label{alg:diffeomorphic-logarithm}
    \begin{algorithmic}[1]
        \REQUIRE $Q\in\so_n$ and $S\in\skewm_n\setminus\fraks$. 
        \ENSURE A preimage $X$ of $Q$ in the closure of $\mathcal{C}_{\xi}\ni S$.
        \STATE (Schur decomposition) $(R, \det(R), \alpha)\gets$~\cref{alg:so-schur} with $S$
        \STATE (Schur decomposition) $(V, \det(V), \sigma)\gets$~\cref{alg:so-schur} with $Q$
        \STATE (Canonical Alignment) $(\theta, \xi, R)\gets$~\cref{alg:canonical-alignment} with $(R, \det(R), \alpha)$
        \STATE (Canonical Alignment) $(\sigma, \eta, V)\gets$~\cref{alg:canonical-alignment} with $(V, \det(V), \sigma)$
        \IF{$\xi\neq (0,0,\ldots, 0), (-1,0,\ldots, 0)$}\label{dlog-alg:frobenius-check}
            \STATE $\beta \gets \sigma + 2\pi\xi$
        \ELSE 
            \IF{$|\sigma_1-V_2^{\T}SV_1| > |\sigma_1-2\pi -V_2^{\T}SV_1|$}
                \STATE $\beta \gets (\sigma_1-2\pi,\sigma_2,\ldots,\sigma_k)$
            \ELSE
                \STATE $\beta \gets \sigma$
            \ENDIF
        \ENDIF
        \STATE $X\gets \sum_{i=1}^mV_{[i]}\sbmatrix{0 & -\beta_i\\\beta_i & 0}V_{[i]}^{\T}$.
        \STATE Label $X$ as ``the diffeomorphic logarithm of $Q$ with $S$''
        \IF{(Optional) $\|X-S\|_2<\pi$}\label{dlog-alg:2pi-check}
            \STATE Re-label $X$ as ``the nearby logarithm of $Q$ around $S$''
        \ENDIF
        \FOR{$i\neq j\leq k$}
            \IF{$|\sigma_i+\sigma_j| = 0$ but $y_i\neq y_j$ or $|\sigma_i-\sigma_j| = 0$ but $y_i\neq -y_j$\label{dlog-alg:conjugate-check}}
                \STATE Re-label $X$ as ``a non-D-preimage of $Q$ in the closure of $\mathcal{C}_{\xi}$''.
            \ENDIF
        \ENDFOR
        \RETURN $X$ with its label. (Optional) canonical Schur info: $V$ and $\beta$.
    \end{algorithmic}
\end{algorithm}

\section{Numerical Experiments}\label{sec:experiments}

This section presents numerical experiments designed to evaluate the computational efficiency, numerical accuracy, and practical implications of the proposed diffeomorphic logarithm on the special orthogonal group. The first experiment compares the diffeomorphic logarithm with the generic principal logarithm computed by the built-in \texttt{MATLAB} routine \texttt{logm}, highlighting differences in runtime, structure preservation, and reconstruction error. The second experiment investigates the impact of diffeomorphic logarithms on the Karcher mean problem with time-varying data, demonstrating the continuity advantages provided by diffeomorphic logarithms relative to principal logarithms. Together, these studies illustrate both the numerical benefits and geometric relevance of the proposed framework.

\subsection{Computing Environment}
All experiments were conducted in \texttt{MATLAB} (R2024a). \Cref{alg:so-schur}, \cref{alg:canonical-alignment} and~\cref{alg:diffeomorphic-logarithm} were implemented in \texttt{C++} and executed in \texttt{MATLAB} via the \texttt{MEX gateway}. \Cref{alg:so-karcher-mean} was implemented and executed in \texttt{MATLAB}. The source code used in this section is publicly available at \url{https://github.com/zhifeng1703/cpp-released-code/diffeoLog}.
{\footnotesize
\begin{itemize}
    \item CPU: \texttt{Intel(R) Core(TM) i7-1065G7, 4 Cores, 1.30 GHz}.
    \item RAM: \texttt{32 GB}
    \item OS: \texttt{Windows 10}.
    \item BLAS and LAPACK Library: \texttt{MATLAB} built-in \texttt{BLAS} and \texttt{LAPACK} (\texttt{Intel(R) MKL}).
\end{itemize}
}

\subsection{Compute Time and Error}

\Cref{fig:logm-numerics} reports the computational cost (measured in wall-clock time) and accuracy (measured in the root mean square error, RMSE) of the diffeomorphic logarithm in~\cref{alg:diffeomorphic-logarithm} and the \texttt{MATLAB} built-in \texttt{logm} routine. For random $Q\in\so_n$ and reference $S\in\skewm_n$, we compute the diffeomorphic logarithm $\mathrm{dlog}_S(Q)$ with and without the canonical shift $\xi$ of $S$, the latter requiring an additional Schur decomposition of $S$. In terms of computational cost, the diffeomorphic logarithm with known shift is consistently the fastest, achieving a speedup of about $10$ over the built-in \texttt{logm}. The unknown-shift case incurs a modest overhead due to the additional Schur decomposition, but still yields a speedup of several times compared to \texttt{logm}. 

All Schur-based logarithms of the form~\eqref{eq:dpreimage-shifted-angle}, including the diffeomorphic logarithm, inherit their dominant numerical error from the Schur decomposition of $Q$, whose stability and accuracy have been carefully studied in~\cite{mataigne2024eigenvalue}. In our tests, the eigenvalue gap drops below $10^{-4}$ when $n\geq 30$, and the Schur decomposition is expected to be more sensitive. Nevertheless, the Schur-based logarithms remain accurate for the tested dimensions: while MATLAB's built-in \texttt{logm} achieves a consistently smaller RMSE below $10^{-15}$, our implementation maintains an RMSE below $10^{-13}$ for $n<200$. This accuracy is adequate for the intended computations, especially considering that the Schur-based logarithm is about an order of magnitude faster in these tests. For applications requiring higher accuracy, the error of a Schur-based logarithm can be further reduced by employing a more accurate Schur decomposition procedure.

\begin{figure}[tbp]
    \centering
    \begin{subfigure}[b]{0.48\textwidth}
        \centering
        \includegraphics[width=\textwidth]{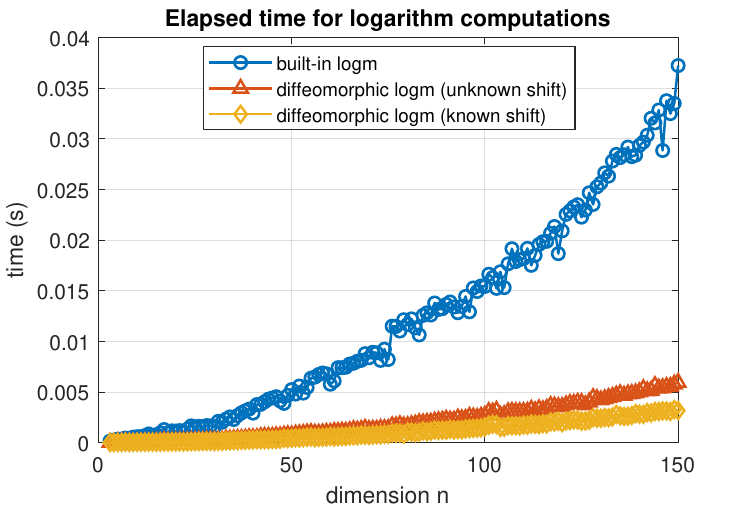}
    \end{subfigure}
    \hfill
    \begin{subfigure}[b]{0.48\textwidth}
        \centering
        \includegraphics[width=\textwidth]{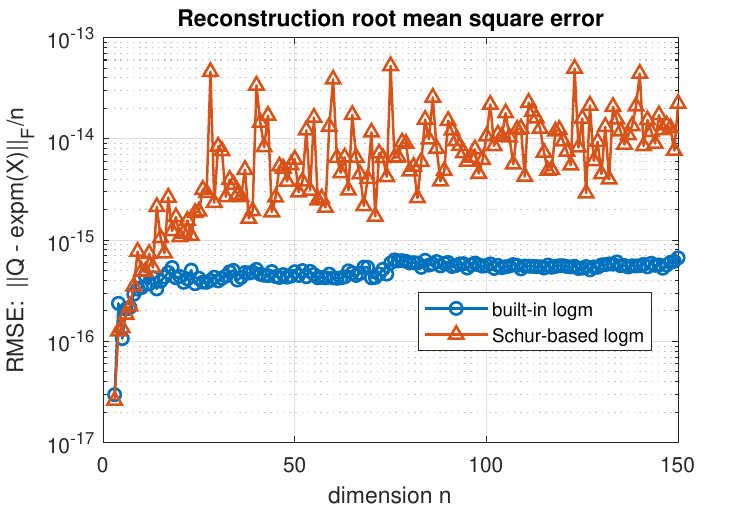}
    \end{subfigure}
    \caption{Compute time and error of different logarithms}\label{fig:logm-numerics}
\end{figure}

\subsection{Karcher Mean with Moving Data}

Beyond pointwise logarithm computation, logarithmic maps in $\so_n$ play a central role in optimization problems in matrix manifolds. In particular, the Karcher mean provides a notion of averaging for rotational data and serves as a representative example where continuity of the logarithm directly affects the behavior of iterative algorithms. This experiment illustrates how the choice of logarithmic inverse influences the evolution of the mean.

Given data points $Q_i$ in $\so_n$ equipped with the canonical metric $g$ proposed in~\cite{edelman1998}, the Karcher mean is defined as the minimizer of the sum of squared distances,
\begin{equation}\label{eq:karcher-mean}
    \underset{M\in\so_n}{\arg\min}\textstyle \frac{1}{2K}\sum_{i=1}^K {\rm dist}(M,Q_i)^2,
\end{equation}
where ${\rm dist}(M, Q_i)$ denotes the length of a minimal geodesic connecting $M$ and $Q_i$ (see, e.g.,~\cite{guigui2023riemanniangeo} for basic theory on Riemannian geometry). For the canonical metric proposed in~\cite{edelman1998}, the geodesic emanating from $Q_i$ with an initial velocity $Q_i\Delta_i$ (where $\Delta_i\in\skewm_n$) arrives at $Q_i\exp(\Delta_i)$, and it attains the length of $\frac{\sqrt{2}}{2}\|\Delta_i\|_{\F}$. Therefore, the pullback formulation of~\eqref{eq:karcher-mean} from the feasible set $M\in\so_n$ to the tangent spaces $T_{Q_i}\so_n$ is given by
\begin{equation}\label{eq:pullback-karcher-mean}
    \underset{Q_i\Delta_i\in T_{Q_i}\so_n}{\arg\min}\textstyle \frac{1}{4K}\sum_{i=1}^K \|\Delta_i\|_{\F}^2,\quad \text{s.t. $Q_i\exp(\Delta_i) = Q_j\exp(\Delta_j)$ for $i, j\leq K$.}
\end{equation}
Note that the variable $M$ in~\eqref{eq:karcher-mean} becomes the shared endpoint in the constraint $Q_i\exp(\Delta_i) = Q_j\exp(\Delta_j) = M$. Instead of searching over $M$ directly,~\eqref{eq:pullback-karcher-mean} searches over $\Delta_i\in\skewm_n$ as a preimage of $Q_i^{\T}M$. When the principal logarithm is used to select the preimage $\Delta_i = \log(Q_i^{\T}M)$, $\frac{\sqrt{2}}{2}\|\log(Q_i^{\T}M)\|_{\F}$ attains the distance between $Q_i$ and $M$. Thus,~\eqref{eq:pullback-karcher-mean} with $\Delta_i = \log(Q_i^{\T}M)$ becomes~\eqref{eq:karcher-mean}. It leads to the classic Riemannian steepest descent scheme~\cref{alg:so-karcher-mean} (see, e.g.,~\cite{moakher2002means,krakowski2007computation}) with the principal logarithm in lines~\ref{mean-alg:mean-update} and~\ref{mean-alg:data-to-mean-update}. On the other hand, adopting the diffeomorphic logarithm in lines~\ref{mean-alg:mean-update} and~\ref{mean-alg:data-to-mean-update} corresponds to the pullback formulation~\eqref{eq:pullback-karcher-mean}, which relaxes the minimal constraint on geodesics.

For the diffeomorphic-logarithm variant of~\cref{alg:so-karcher-mean}, the choices of the initial $\Delta_i$ matter. When $\Delta_i\in\mathcal{C}_*$, there are two candidates available, namely $\Delta_i^{X}$ and $\Delta_i^{Y}$ in the form of~\eqref{eq:dconnected-dpreimage}, and when $\Delta_i\notin\mathcal{C}_{*}$, there is an only candidate $\Delta_i^{\xi}$ determined by the canonical shift $\xi$. Once the initial $\Delta_i$ is chosen, the later $\Delta_i$ evolves continuously in a local diffeomorphism structure and stays on the initial D-component.

We consider a time-varying experiment with $Q_i(t)=\exp(D_i(t))\in\so_5$ for $i=1,2,3,4$, where $D_i(t)$ are continuous curves in $\skewm_5$. For every time $t\in [0,1]$, the corresponding Karcher mean $M(t) = \exp(S(t))\in \so_5$ of $Q_i(t)$ is computed using two variants of~\cref{alg:so-karcher-mean}. A natural initialization is given by 
\begin{equation}\label{eq:good-initial}
    S^{(0)}_{\text{good}}=\mathbf{0},\quad M^{(0)}_{\text{good}} = I_5, \quad \boxed{\Delta_{i,\text{ good}}^{(0)} = -D_i},
\end{equation}
which was observed to be reliable for the generated problems.

To better illustrate the stability difference between the two variants of~\cref{alg:so-karcher-mean}, we instead use the following initialization:
\begin{equation}\label{eq:poor-initial}
    S^{(0)}_{\text{bad}}=-\textstyle\frac{1}{2K}\sum_{i=1}^K D_i(t),\; M^{(0)}_{\text{bad}} = \exp(S^{(0)}_{\text{bad}}), \; \boxed{\Delta_{i,\text{bad}}^{(0)} = {\rm dlog}_{\Delta_{i,\text{ good}}^{(0)}}(Q_i(t)^{\T}M^{(0)}_{\text{bad}})},
\end{equation}
which is obtained by taking a gradient ascent step of~\eqref{eq:karcher-mean} at $I_5$. Thus,~\eqref{eq:poor-initial} is a deliberately worse initialization than~\eqref{eq:good-initial}. Note that $\Delta_{i,\text{bad}}^{(0)}$ at $M_{\rm bad}^{(0)}$ is obtained from the reliable $\Delta_{i,\text{ good}}^{(0)}=-D_i(t)$ at $Q_i(t)^{\T}M_{\rm good}^{(0)}$ via the diffeomorphic logarithm, which preserves some information from the reliable~\eqref{eq:good-initial}.

The results are reported in~\cref{fig:moving-mean}. The green curves represent the evolving data $Q_i(t)=\exp(D_i(t))$ for $i=1,2,3,4$. The red dot-dashed curve corresponds to the converged stationary point $M(t)=\exp(S(t))$ obtained using the principal logarithm. The blue dotted curve reports the converged stationary point obtained using the diffeomorphic logarithm. Both variants start with the initial conditions~\eqref{eq:poor-initial}. \Cref{fig:moving-mean} shows that the stationary point computed using the diffeomorphic logarithm evolves continuously with respect to time, whereas the principal logarithm leads to discontinuous jumps. These jumps indicate instability of the principal-logarithm variant under poor initialization, especially when the data are separated further apart, as reflected by larger canonical angles in $D_i(t)$. In contrast, with an appropriate choice of initial conditions, the diffeomorphic-logarithm variant preserves consistency by updating logarithms relative to their previous values, which leads to the observed robust performance. In practice, such an initialization can often be obtained from a priori knowledge of the data or the underlying model. This experiment highlights a key advantage of the diffeomorphic logarithm: it enables optimization procedures that remain consistent under perturbations, leading to smoother and more reliable behavior.

\begin{figure}[tbp]
    \centering
    \includegraphics[width=.7\textwidth]{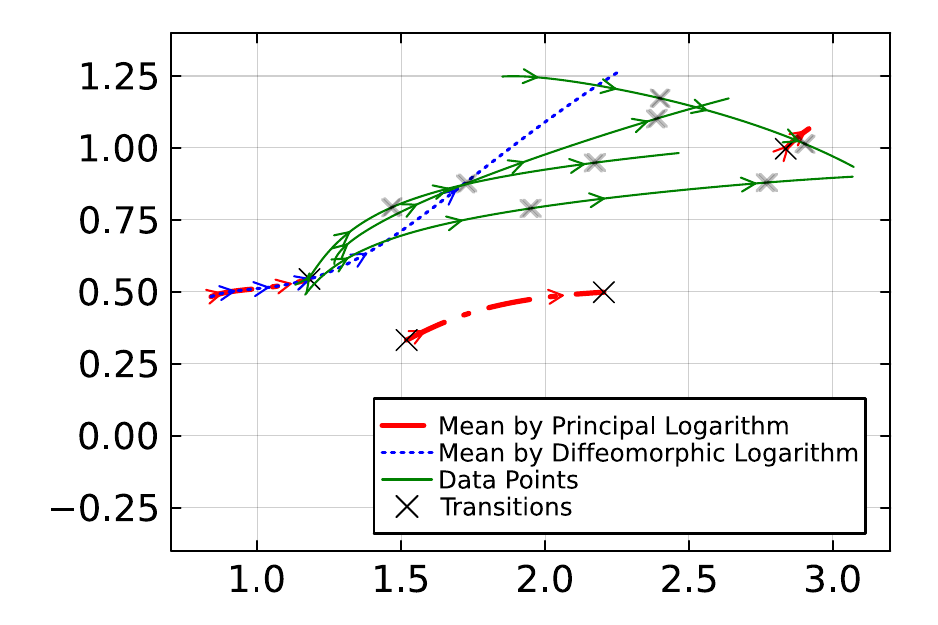}
    \caption{Converged minimizers of the moving data points}\label{fig:moving-mean}
\end{figure}

\begin{algorithm}
\caption{Karcher mean on $\so_n$ via principal or \fbox{diffeomorphic} logarithms\\{\footnotesize The diffeomorphic-logarithm variant uses the boxed terms; the principal-logarithm variant omits them.}}
\label{alg:so-karcher-mean}
\begin{algorithmic}[1]
\REQUIRE Data $\{Q_i\}_{i=1}^K \subset \so_n$, initial $S,\boxed{\{\Delta_i\}_{i=1}^K}\in\skewm_n$ where $Q_i\exp(\Delta_i) = \exp(S)$, step size $\alpha>0$, tolerance $\mathrm{AbsTol}>0$.
\ENSURE Mean $M = \exp(S)\in\so_n$.

    \STATE $M \gets \exp(S)$, $V \gets -\frac{1}{2K}\sum_{i=1}^K \log(Q_i^{\rm T}M)$ or \fbox{$V\gets -\frac{1}{2K}\sum_{i=1}^K \Delta_i$}
    \WHILE{$\|V\|_{\mathrm F} > \mathrm{AbsTol}$}
        \STATE $M \gets M\exp(\alpha V)$
        \STATE $S \gets \mathrm{log}(M)$ or \fbox{$S \gets \mathrm{dlog}_{S}(M)$}\codecomment{$\mathrm{dlog}$ denotes the diffeomorphic log.}\label{mean-alg:mean-update}

        \FOR{$i=1,\ldots,K$}
            \STATE $\Delta_i \gets \mathrm{log}(Q_i^{\T}M)$ or \fbox{$\Delta_i \gets \mathrm{dlog}_{\Delta_i}(Q_i^{\T}M)$}\codecomment{$\mathrm{dlog}$ denotes the diffeomorphic log.}\label{mean-alg:data-to-mean-update}
        \ENDFOR
        \STATE $V \gets -\frac{1}{2K}\sum_{i=1}^K \Delta_i$
    \ENDWHILE
    \RETURN $M$ and $S$
    \end{algorithmic}
\end{algorithm}

\section{Conclusion}\label{sec:conclusion}

This paper develops a geometric and computational theory for inverting the matrix exponential on the skew-symmetric matrices beyond the classical principal logarithm. A component-wise analysis of $\skewm_n \setminus \fraks$ reveals a complete inverse structure: inverse images are unique on non-special D-components, while the special component $\mathcal{C}_*$ containing the principal branch admits an explicit two-preimage characterization (\cref{prop:dconnected-dpreimage}). All remaining D-components are shown to be diffeomorphic under the exponential (\cref{thm:component-diffeomorphism}), and their structure is systematically labeled by integer shifts of $2\pi$ arising from the canonical Schur decomposition (\cref{def:canonical-decomp}). Moreover, the image of each D-component is path-connected (\cref{prop:path-connected}), and the closures of these images cover $\so_n$ (\cref{coro:dcomponent-closure}). These structural results lead to the definition of the \emph{diffeomorphic logarithm} (\cref{def:diffeomorphic-logarithm}), which extends both the principal and nearby logarithms while guaranteeing a consistent fallback preimage whenever a diffeomorphic inverse does not exist. An efficient numerical algorithm (\cref{alg:diffeomorphic-logarithm}) is proposed, demonstrating substantial computational speedups and strict preservation of skew-symmetry compared with generic matrix logarithm routines. The practical implications are illustrated through optimization on $\so_n$ (\cref{fig:moving-mean}), where diffeomorphic logarithms enable continuous evolution of the Karcher mean for time-varying data, highlighting their stability advantages in geometric computation.

\appendix

\bibliographystyle{alphaurl}
\bibliography{refs}

\end{document}